\documentclass[12pt,leqno]{amsart}

\usepackage{amsmath,amscd,amsthm,amsxtra,amssymb}
\usepackage{epsfig,graphics,color,colortbl}
\usepackage{amssymb,latexsym}
\usepackage{mathrsfs}
\usepackage[poly,all]{xy}
\usepackage{marginnote}
\usepackage{xspace}
\usepackage[colorlinks=true, pdfstartview=FitV, linkcolor=blue,citecolor=blue,urlcolor=blue]{hyperref}

\usepackage{yfonts}
\usepackage{enumerate}

\usepackage[usenames,dvipsnames,svgnames,table]{xcolor}
\usepackage[normalem]{ulem}  
\usepackage{float}
\usepackage[colorinlistoftodos]{todonotes}
\newcommand{\arxiv}[1]{\href{http://arxiv.org/abs/#1}{\texttt{arXiv:#1}}}

\newdir{ >}{{}*!/-10pt/@{>}}

\allowdisplaybreaks[3]

\newlength{\mylength}
\renewcommand{\le}{\leqslant}
\renewcommand{\ge}{\geqslant}
\renewcommand{\emptyset}{\varnothing}

\theoremstyle{plain}
\newtheorem{thm}{Theorem}[section]
\newtheorem{mthm}[thm]{Main\,Theorem}

\newtheorem{prop}[thm]{Proposition}
\newtheorem{coro}[thm]{Corollary}
\newtheorem{lem}[thm]{Lemma}
\newtheorem{conj}{Conjecture}
\newtheorem{mainconj}[conj]{Main Conjecture}

\theoremstyle{definition}
\newtheorem{example}[thm]{Example}
\newtheorem{remark}[thm]{Remark}
\newtheorem{definition}[thm]{Definition}

\newtheorem{question}[conj]{Qusetion}
\newtheorem{problem}[conj]{Problem}

\newcommand{\nc}{\newcommand}

\newenvironment{answer}
{\noindent{\bf Answer}\hs{1ex}}
{\hfill \qedsymbol}
\nc{\Prop}{\begin{prop}}
\nc{\enprop}{\end{prop}}
\nc{\Lemma}{\begin{lem}}
\nc{\enlemma}{\end{lem}}
\nc{\Ex}{\begin{example}}
\nc{\enex}{\end{example}}
\nc{\Th}{\begin{thm}}
\nc{\enth}{\end{thm}}
\nc{\Def}{\begin{definition}}
\nc{\edf}{\end{definition}}
\nc{\Conj}{\begin{conj}}
\nc{\enconj}{\end{conj}}
\nc{\mConj}{\begin{mainconj}}
\nc{\enmconj}{\end{mainconj}}
\nc{\Quest}{\begin{question}}
\nc{\enquest}{\end{question}}
\nc{\Rem}{\begin{remark}}
\nc{\enrem}{\end{remark}}
\nc{\Ans}{\begin{answer}}
\nc{\enans}{\end{answer}}
\nc{\Prob}{\begin{problem}}
\nc{\enprob}{\end{problem}}
\nc{\MTh}{\begin{mthm}}
\nc{\enmth}{\end{mthm}}

\newenvironment{red}
{\relax\color{red}}
{\hspace*{.5ex}\relax}

\nc{\berm}{\ber{}\marginnote{\fbox{\scshape\lowercase{M}}}}
\nc{\berMH}{\ber{}\marginnote{\fbox{\scshape\lowercase{MH}}}}
\nc{\berE}{\ber{}\marginnote{\fbox{\scshape\lowercase{E}}}}
\nc{\bero}{\ber{}\marginnote{\fbox{\scshape\lowercase{O}}}}
\nc{\bebE}{\beb{}\marginnote{\fbox{\scshape\lowercase{E}}}}

\newenvironment{blue}
{\relax\color{Blue}}
{\hspace*{.5ex}\relax}

\newcommand{\beb}{\begin{blue}}
\newcommand{\eb}{\end{blue}}

\newenvironment{yellow}
{\relax\color{Dandelion}}
{\hspace*{.5ex}\relax}

\newcommand{\bey}{\begin{yellow}}
\newcommand{\ey}{\end{yellow}}

\nc{\on}{\operatorname}

\newcommand{\C}{{\mathbb C}}
\newcommand{\Q}{\mathbb {Q}}
\newcommand{\Z}{\ms{2mu}{\mathbb Z}}
\newcommand{\R}{{\mathbb R}}

\newcommand{\D}{\mathscr{D}\ms{1mu}}

\newcommand{\one}{{\ms{1mu}\bf{1}}}
\newcommand{\seteq}{\mathbin{:=}}

\newcommand{\hd}{{\mathrm{hd}}}      					 
\newcommand{\soc}{\mathrm{soc}}					
\newcommand{\To}[1][{\hs{0.8ex}}]{\xrightarrow{\ms{7mu}{#1}\ms{7mu}}}

\newcommand{\g}{{\ms{1mu}\mathfrak{g}\ms{1mu}}}
\newcommand{\n}{\mathfrak{n}}

\newcommand{\Hom}{\operatorname{Hom}}
\newcommand{\HOM}{\on{\mathrm{H{\scriptstyle OM}}}}

\newcommand{\isoto}[1][]{\mathop{\xrightarrow%
[{\raisebox{.3ex}[0ex][.3ex]{$\scriptstyle{#1}$}}]%
{{\raisebox{-.6ex}[0ex][-.6ex]{$\mspace{5mu}\sim\mspace{5mu}$}}}}}

\newcommand{\Mod}{\text{-$\mathrm{Mod}$}}
\newcommand{\gmod}{\text{-}\mathrm{gmod}}
\newcommand{\gMod}{\text{-}\mathrm{gMod}}

\newcommand{\F}{\mathscr{F}}

\def\T{{\mathcal T}}

\newcommand{\conv}[1][]{
\underset{\raisebox{.5ex}{$\scriptstyle{#1}$}}{\mathbin{\scalebox{1.2}{$\mspace{1.5mu}\circ\mspace{1.5mu}$}}}}
\newcommand{\hconv}{\mathbin{\scalebox{.9}{$\nabla$}}}

\newcommand{\sconv}{\mathbin{\scalebox{.9}{$\Delta$}}}

\newcommand{\de}{\on{\textfrak{d}}}

\newcommand{\tB}{\widetilde{B}}
\newcommand{\tb}{\widetilde{b}}
\newcommand{\seed}{\mathscr{S}}
\newcommand{\cmC}{\cartan}  
\newcommand{\wlP}{\mathsf{P}}   
\newcommand{\rlQ}{\mathsf{Q}}   
\newcommand{\weyl}{\mathsf{W}}  
\newcommand{\prD}{\Delta_+}            
\newcommand{\nrD}{\Delta_-}            
\newcommand{\sg}{\mathfrak{S}}   

\nc{\prt}{\prD}
\nc{\qQ}{Q}

\newcommand{\Zq}{{\Z[q,q^{-1}]}}  		

\newcommand{\wt}{\mathrm{wt}} 		
\newcommand{\bR}{\mathbf{k}} 		
\nc{\corp}{\bR}
\newcommand{\catC}{ \mathscr{C}}  	

\newcommand{\dM}{ \mathrm{M }}              
\newcommand{\dC}{ \mathsf{C }}              
\newcommand{\dS}{ \mathrm{S }}              
\newcommand{\gW}{\mathrm{W}}
\newcommand{\sgW}{\mathrm{W}^*}
\newcommand{\tf}{{\widetilde{f}}}  		
\newcommand{\te}{{\widetilde{e}}}  		
\newcommand{\tF}{\widetilde{\mathrm{F}}} 		
\newcommand{\tE}{\widetilde{\mathrm{E}}} 		
\newcommand{\tEs}{\widetilde{\mathrm{E}}^*} 		
\newcommand{\tFs}{\widetilde{\mathrm{F}}^*} 		
\newcommand{\tEm}{\widetilde{\mathrm{E}}^{\hskip 0.1em \rm max}}  		
\newcommand{\tEsm}{\widetilde{\mathrm{E}}^{*{ \hskip 0.1em  \rm max}}}  		
\newcommand{\ep}{\varepsilon}  		
\newcommand{\ph}{\varphi}  		

\renewcommand{\preceq}{\preccurlyeq}

\newcommand{\La}{\Lambda} 			
\newcommand{\tLa}{\widetilde{\Lambda}} 			

\nc{\Ma}{{\ms{1.5mu}\mathsf{M}}}
\nc{\Na}{\mathsf{N}}
\nc{\Xa}{\mathsf{X}}
\nc{\Ya}{\mathsf{Y}}
\nc{\Laa}{\mathsf{L}}

\newcommand{\zM}{{z_\Ma}}

\newcommand{\id}{\ms{2mu}{\mathsf{id}}\ms{1mu}}   				

\newcommand{\col}{\colon}
\nc{\be}{\begin{enumerate}}
\newcommand{\bnum}{\be[{\rm(i)}]}
\newcommand{\bna}{\be[{\rm(a)}]}

\newcommand{\rtl}{\rlQ}

\newcommand{\rmat}[1]{\ms{1mu}{\mathbf{r}}_%
{\mspace{-2mu}\raisebox{-.6ex}{${\scriptstyle{#1}}$}}}

\newcommand{\shc}{{\ms{2mu}\mathcal{C}}}

\nc{\ms}{\mspace}
\nc{\cl}{\colon}
\nc{\ro}{{\rm (}}
\nc{\rf}{{\rm )}\xspace}
\nc{\noi}{\noindent}
\nc{\bl}{\bigl(}
\nc{\br}{\bigr)}

\newenvironment{myequationn}
{\relax\setlength{\arraycolsep}{1pt}\begin{eqnarray*}}
{\end{eqnarray*}}

\newenvironment{myequation}
{\relax\setlength{\arraycolsep}{1pt}\begin{eqnarray}}
{\end{eqnarray}}

\nc{\eq}{\begin{myequation}}
\nc{\eneq}{\end{myequation}}
\nc{\eqn}{\begin{myequationn}}
\nc{\eneqn}{\end{myequationn}}

\newenvironment{myarray}[1]{\relax\setlength{\arraycolsep}{1pt}
\begin{array}{#1}}{\end{array}\relax}

\newcommand{\ba}{\begin{myarray}}
\newcommand{\ea}{\end{myarray}}

\nc{\hs}{\hspace*}
\nc{\vs}{\vspace*}
\nc{\set}[2]{\left\{{#1}\mid{#2}\right\}}
\nc{\snoi}{\smallskip\noi}
\nc{\mnoi}{\medskip\noi}
\nc{\al}{\alpha}
\nc{\rmz}{\setminus\{0\}}
\nc{\tens}
[1][]{\mathbin{\otimes{\ms{2mu}}_{\raise1.5ex\hbox to-.1em{}#1}}}
\nc{\vphi}{\varphi}
\nc{\ee}{\end{enumerate}}
\nc{\la}{\lambda}
\nc{\bc}{\begin{cases}}
\nc{\ec}{\end{cases}}
\nc{\qtq}[1][and]{\quad\text{#1}\quad}
\nc{\qt}[1]{\quad\text{#1}}
\nc{\dual}{{\displaystyle{\ms{1mu}\star}}}
\nc{\wle}{\preceq}
\nc{\epito}{\twoheadrightarrow}
\nc{\epiTo}[1][]{\xymatrix@C=4ex{{}\ar@{->>}[r]^-{#1}&{}}}
\nc{\Proof}{\begin{proof}}
\nc{\lan}{\langle}
\nc{\ran}{\rangle}
\nc{\ang}[1]{\lan{#1}\ran}
\nc{\QED}{\end{proof}}
\nc{\soplus}{\mathbin{\raisebox{.1ex}{\scalebox{.65}{\raisebox{.4ex}{$\displaystyle\bigoplus$}}}}}
\nc{\eps}{\varepsilon}
\nc{\supp}{\on{supp}}
\nc{\sct}{strongly commute\xspace}
\nc{\scts}{strongly commutes\xspace}
\nc{\bce}{\eta}			
\nc{\height}[1]{\on{ht}(\ms{.5mu}{#1}\ms{.5mu})}
\nc{\braid}{{\ms{1mu}\mathrm{br}}}
\nc{\gp}{\mathfrak{p}}
\nc{\gb}{\mathfrak{b}}
\nc{\wtl}{\wlP}
\nc{\ra}{real and admits an affinization}
\nc{\ras}{real and admit affinizations}
\nc{\Cor}{\begin{coro}}
\nc{\encor}{\end{coro}}
\nc{\shf}{\mathcal{F}}
\nc{\Cw}[1][{w}]{\catC_{{#1}}}
\nc{\Cwb}[1][{w}]{\catC^{\ms{2mu}\mathrm{big}}_{{#1}}}
\nc{\tCw}[1][{w}]{\widetilde{\catC}_{{#1}}} 
\nc{\tCwv}[1][{w,v}]{\widetilde{\catC}_{{#1}}} 
\nc{\akew}[1][1ex]{\rule[-1ex]{#1}{0ex}}
\nc{\ake}[1][2ex]{\rule[-1ex]{0ex}{#1}}
\nc{\akete}[1][-1ex]{\rule[{#1}]{0ex}{1ex}}
\nc{\tRm}{(R\gmod)\widetilde{\mbox{$\ake[2.5ex]\akew[.9ex]$}}}
\nc{\monoTo}[1][]{\xymatrix{\ar@{>->}[r]^-{{#1}}&}}
\nc{\monoto}[1][]{\rightarrowtail}
\nc{\tX}{\widetilde{X}}
\nc{\corps}{\corp}
\nc{\tL}{\widetilde{L}}
\nc{\prtl}{\ms{2mu}\rtl_+}
\nc{\nrtl}{\ms{2mu}\rtl_-}
\nc{\nrt}{\rt_-}
\nc{\tK}{\widetilde{K}}
\nc{\tep}{\widetilde\ep}
\nc{\teps}{\widetilde\ep}
\nc{\tmu}{\widetilde \mu} 
\nc{\teta}{\widetilde\eta}
\nc{\ga}{\mathfrak{a}}
\nc{\scbul}{{\,\raise1pt\hbox{$\scriptscriptstyle\bullet$}\,}}
\nc{\bwr}{\mbox{\large$\wr$}}
\nc{\tR}{{\widetilde{\mathrm{R}}}}
\nc{\lS}{\mathsf{S}}
\nc{\lZ}{\mathcal{Z}}
\nc{\prolim}[1][]{\mathop{\varprojlim}\limits_{{#1}}}
\nc{\sym}{\sg}
\newcounter{myc}

\nc{\txi}{\tilde{\xi}}
\nc{\rl}{\rlQ}
\nc{\sfC}{\mathsf{C}}
\nc{\cor}{{\ms{1mu}\mathbf{k}\ms{1mu}}}
\nc{\Pro}{\on{Pro}}

\nc{\hM}{\widehat{\mathsf{M}}}
\nc{\aff}{\mathrm{aff}}
\nc{\rDa}{{\mathscr{D}_\aff}}
\nc{\st}[1]{\left\{{#1}\right\}}
\nc{\W}{\mathsf{W}}
\nc{\rt}{\Delta}
\nc{\pwtl}{\wtl_+}
\nc{\rev}{{\mathrm{rev}}}
\nc{\E}{\mathrm{E}}

\nc{\Qt}[1][w]{\mathrm{Q}_{#1}}
\nc{\Qtl}[1][w]{\mathrm{Q}^{\mathrm l}_{#1}}
\nc{\Qtr}[1][w]{\mathrm{Q}^{\mathrm r}_{#1}}
\nc{\Ctr}{\mathsf{C}}
\nc{\Ctrs}{{\mathsf{C}^*}}
\nc{\Dynkin}{\Delta}
\nc{\cartan}{\mathsf{C}}
\nc{\sfc}{\mathsf{c}}
\nc{\sfa}{\mathsf{a}}
\nc{\sfb}{\mathsf{b}}
\nc{\SW}{\mathrm{K}}
\nc{\hSW}{\widehat{\mathrm{K}}}
\nc{\refl}{\mathscr{S}}
\nc{\Rre}{\mathrm{R}^{\mathrm{ren}}}
\nc{\Rpre}{\mathrm{R}^{\mathrm{ren}\;'}}
\nc{\bRre}{\ol{\mathrm{R}}^{\ms{2mu}\mathrm{ren}}}
\nc{\sfd}{\ms{1mu}\mathsf{d}\ms{1mu}}
\nc{\shm}{\mathcal{M}}
\nc{\sht}{\mathcal{T}}
\nc{\rank}{\mathrm{rank}}
\nc{\Da}{{\D}\ms{-2.8mu}\raisebox{-.35ex}{$\scriptstyle\mathrm{aff}$}}
\nc{\lDa}{\Da^{-1}}
\nc{\bchi}{{\scalebox{.9}{\mbox{$\mathscr{E}$}}}}
\nc{\bchis}{\bchi{}^{\ms{2mu}*}}
\nc{\Daf}{\mathscr{D}}
\nc{\Laf}{\mathscr{L}}
\nc{\tLaf}{\widetilde{\Laf}}
\nc{\wtaf}{\mathscr{W}\ms{-3mu}{\mathit{t}}}
\nc{\res}[1][]{\mathop\star\limits_{\raisebox{.4ex}{$\scriptstyle #1$}}\ms{2mu}}
\nc{\hchi}{\widehat{\chi}}
\nc{\convaff}{\mathop{\scalebox{1.1}{$\mspace{1.5mu}\circ\mspace{1.5mu}$}}\limits}
\nc{\cvb}{CVB\xspace}
\nc{\svelt}{essentailly small\xspace}
\nc{\Proc}{\on{Pro}_{\mathrm{coh}}}
\nc{\sha}{\mathcal{A}}
\nc{\Ker}{\on{Ker}}
\nc{\Coker}{\on{Coker}}
\nc{\Aff}[1][z]{\on{Aff}_{\ms{1mu}#1}}
\nc{\scb}{\scalebox}
\nc{\afr}{affreal\xspace}
\nc{\epifrom}{\ms{-5mu}\xymatrix@C=3ex{{}&{}\ar@{->>}[l]}\ms{-5mu}}
\nc{\Mid}{\bigm|}
\nc{\ol}{\overline}
\nc{\bpsi}{\ol{\psi}}
\nc{\Rat}[1][z]{\on{Raff}_{\ms{1mu}#1}}

\nc{\inddlim}{\mathop{\mbox{``{$\ms{1mu}\varinjlim$}''}}\limits}
\nc{\Rmat}{\mathrm{R}\ms{1mu}}
\nc{\Runi}{\mathrm{R}^{\mathrm{univ}}}
\nc{\Modg}{\mathrm{Mod}_{\mathrm{gr}}}
\nc{\KO}{quasi-rigid\xspace}
\nc{\hF}{\widehat{\F}}
\nc{\Modc}{\Mod_{\mathrm{coh}}}
\nc{\e}{\mathrm{e}}
\nc{\Idx}{\mathsf{\Lambda}}
\nc{\hA}{\widehat{A}}
\nc{\prood}{\mathop{\text{``}\prod\text{''}}\limits}

\nc{\hrefl}{\widehat{\mathscr{S}}}
\nc{\ev}{\mathrm{ev}}
\nc{\coev}{\mathrm{coev}}
\nc{\ihom}{\mathcal{H}om}
\nc{\tY}{\widetilde{Y}}
\nc{\tensz}{\tens[z]\ms{-3.5mu}}

\nc{\tRre}{\widetilde{\mathrm{R}}^{\mathrm{ren}}}
\nc{\dg}{\mathbf{\lambda}}
\nc{\htens}{\hconv}
\nc{\stens}{\sconv}
\nc{\Modgc}{\mathrm{Modg}_{\mathrm{coh}}}
\nc{\Aut}{\mathrm{Aut}}
\nc{\Rd}[1][\dg]{R_{#1}\gmod}
\nc{\nn}{\nonumber}
\nc{\Dual}{\mathrm{D}\ms{1mu}}
\nc{\DA}[1][A]{\ms{1mu}\mathrm{D}_{{#1}}}
\nc{\DmA}[1][A]{\ms{1mu}\Dual^{-1}_{{#1}}}
\nc{\tensa}{\tens[A]\ms{-3mu}}
\nc{\tensc}{\tens[{\ms{3mu}\cor}]\ms{-3mu}}

\nc{\ble}{\preccurlyeq}
\nc{\bge}{\succcurlyeq}
\nc{\Cwv}[1][{w,v}]{\catC_{#1}}

\nc{\afn}{affine object\xspace}
\nc{\afns}{affine objects\xspace}
\nc{\subafn}{affine subobject\xspace}
\nc{\Afns}{Affine objects\xspace}
\nc{\mr}{\mathrm{r}}
\nc{\ml}{\mathrm{l}}
\nc{\LQ}{\mathscr{L}}
\nc{\RQ}{\mathscr{R}}
\nc{\tM}{\widetilde{M}}
\nc{\tN}{\tilde{N}}
\nc{\convz}[1][z]{\underset{#1}{\circ}}
\nc{\uw}[1][w]{\underline{#1}}
\nc{\wuw}[1][{\uw}]{\widetilde{#1}}
\nc{\Bw}[1][{\uw}]{\mathbf{B}({#1})}
\nc{\CP}[1][{\uw}]{\mathrm{K}_{#1}} 
\nc{\CK}[1][{w,w'}]{\mathrm{K}'_{#1}} 
\nc{\Es}[1][i]{\mathrm{E}^*_{{#1}}\ms{1mu}}
\nc{\Est}[1][{w,w'}]{\mathbf{E}^*_{#1}}
\nc{\CBw}[1][w]{\mathfrak{B}_{#1}}
\nc{\Em}{E^{\max}}
\nc{\Esm}{\tE^{*\;\max}}
\nc{\car}{\mathrm{ch}}
\nc{\tdC}{\widetilde{\dC}}
\nc{\rert}{\rt^{\mathrm{real}}}
\nc{\Qti}[1][i]{\Qt(\ang{#1})}
\nc{\Esi}[1][i]{\mathbf{E}^*_{#1}}
\nc{\iz}{\ang{i}_z}
\nc{\by}{\mathrel{/}}
\nc{\epss}{\eps^*}
\nc{\Qr}[1][w,s_i]{\mathrm{Q}^{\mathrm r}_{#1}}
\nc{\tS}{\widetilde{S}}
\nc{\sdc}{\mathsf{c}}
\nc{\Rw}[1][w]{R_{\ms{-3mu}{#1}}}
\nc{\Pw}[1][w]{\mathsf{P}^{\ms{1mu}\raisebox{.25ex}{$\scriptstyle#1$}}}
\nc{\op}{\mathrm{op}}
\nc{\bi}{\mathbf{i}}
\nc{\tbi}{\tilde{\mathbf{i}}}
\nc{\bj}{\mathbf{j}}
\nc{\tbj}{\tilde{\mathbf{j}}}
\nc{\ibox}[1][$\bi$]{{#1}-box\xspace}
\nc{\iboxes}[1][$\bi$]{{#1}-boxes\xspace}
\nc{\calM}{\mathcal{M}}
\nc{\calC}{\mathcal{C}}
\nc{\Aqn}{A_q(\n)} 
\nc{\tseed}{\widetilde{\seed}}
\nc{\hseed}{\widehat{\seed}}
\nc{\K}{\mathscr{K}}
\nc{\G}{\mathscr{G}}
\nc{\fr}{{\mathrm{fr}}}
\nc{\ex}{{\mathrm{ex}}}
\nc{\monomial}[1][{\seed}]{${#1}$-monomial\xspace}
\nc{\monomials}[1][{\seed}]{${#1}$-monomials\xspace}
\nc{\wseed}{\mathscr{T}}
\nc{\twseed}{\widetilde{\mathscr{T}}}
\nc{\cM}{\mathcal{M}}
\nc{\cfac}{convolution factor\xspace}
\nc{\cf}{\cfac}

\nc{\Ks}{\mathsf{K}}
\nc{\tKs}{\widetilde{\mathsf{K}}}
\nc{\Js}{\mathsf{J}}
\nc{\tJs}{{\widetilde{\mathsf{J}}}}
\nc{\ko}{{k_0}}
\nc{\seedk}[1][k]{\seed^{(#1)}}
\nc{\Loc}[1][w,v]{\mathrm{Q}_{#1}}
\nc{\quot}[1][{w,v}]{\mathrm{Q}_{#1}}
\nc{\otens}{\mathop{\nabla}\limits^{\xrightarrow{}}}
\nc{\Kq}[1][{w,v}]{\mathrm{K}(\Cwv[#1])\vert_{q=1}}
\nc{\Kt}{\mathcal{K}}
\nc{\Ft}{\mathcal{F}}
\nc{\txc}{\textcircled}
\nc{\ef}{\mathrm{ef}}
\nc{\gL}{\textsf{g}^{\mathrm L}}
\nc{\gR}{\textsf{g}^{\mathrm R}}
\nc{\tC}{\widetilde{\mathcal{C}}}
\nc{\kt}{\mathsf{k}}
\nc{\ft}{\mathsf{f}}
\nc{\qF}{\mathrm{F}}
\nc{\qE}{\mathrm{E}}
\nc{\kfc}{\mathsf{c}}
\nc{\sK}{\mathbb{K}}
\nc{\sF}{\mathbb{F}}
\nc{\sG}{\mathbb{G}}
\nc{\bfa}{\mathbf{a}}
\nc{\bfb}{\mathbf{b}}
\nc{\bfc}{\mathbf{c}}
\nc{\bfd}{\mathbf{d}}
\nc{\bfe}{\mathbf{e}}
\nc{\ca}{\mathscr{A}}
\nc{\uca}{\mathscr{U}}
\nc{\sfJ}{\mathsf{J}}
\nc{\sfK}{\mathsf{K}}
\nc{\GLS}{\mathrm{GLS}}
\nc{\sgr}{\mathsf{r}}   
\nc{\calT}{\mathcal{T}}
\nc{\frakC}{\mathfrak{C}}
\nc{\frakc}{\mathfrak{c}}
\nc{\cC}{\mathcal{C}}
\nc{\Zqh}{\Z[q^{\pm1/2}]}
\nc{\Qqh}{\Q[q^{\pm1/2}]}
\nc{\cmu}{in the canonical mutation class\xspace}
\nc{\ncf}{have no common convolution factor other than $\one$\xspace}
\nc{\cms}{full quantum monoidal seed\xspace}
\nc{\cs}{full seed\xspace}
\nc{\prtm}{\prt^{\min}}
\nc{\chl}{\mathrm{span}_{\R_{\ge0}}}
\nc{\Rwv}[1][{w,v}]{\mathcal{R}_{{#1}}}
\nc{\seedLec}{\Sigma_{\uw, v}^{\rm Lec}}
\nc{\seedMenard}{\Sigma_{\uw, v}^{\rm Menard}}
\nc{\seedMBY}{\Sigma_{\uw, v}^{\rm MBY}}

\numberwithin{equation}{section}
\title[Monoidal seeds of $\Cwv$]{Monoidal seeds of the categories $\Cwv$ over quiver Hecke algebras}

\author[M. Kashiwara]{Masaki Kashiwara}
\thanks{The research of M.\ Kashiwara
	was supported by Grant-in-Aid for Scientific Research (B)  23K20206,  
	Japan Society for the Promotion of Science.}
\address[M. Kashiwara]{%
	Kyoto University Institute for Advanced Study, Research Institute
	for Mathematical Sciences, Kyoto University, Kyoto 606-8502, Japan;
    \& Korea Institute for Advanced Study, Seoul 02455, South Korea
}
\email[M. Kashiwara]{masaki@kurims.kyoto-u.ac.jp}

\author[M. Kim]{Myungho Kim}
\address[M. Kim]{Department of Mathematics, Kyung Hee University, Seoul 02447, South Korea}
\email[M. Kim]{mkim@khu.ac.kr}
\thanks{The research of M.\ Kim was supported by the National Research Foundation of Korea (NRF) Grant funded by the Korea government(MSIT) (NRF-2020R1A5A1016126).}

\author[S.-j. Oh]{Se-jin Oh}
\thanks{ The research of S.-j.\ Oh was supported by the National Research Foundation of
	Korea (NRF) Grant funded by the Korea government(MSIT) (NRF-2022R1A2C1004045).}
\address[S.-j. Oh]{ Department of Mathematics, Sungkyunkwan University, Suwon, 16419, South Korea}
\email[S.-j. Oh]{sejin092@gmail.com}

\author[E. Park]{Euiyong Park}
\thanks{The research of E.\ Park was supported by the National Research Foundation of Korea (NRF) Grant funded by the Korea Government(MSIT)(RS-2023-00273425 and NRF-2020R1A5A1016126).}
\address[E. Park]{Department of Mathematics, University of Seoul, Seoul 02504, South Korea}
\email[E. Park]{epark@uos.ac.kr}

\makeatletter
\@namedef{subjclassname@2020}{\textup{2020} Mathematics Subject Classification}
\makeatother

\keywords{Categorification, Cluster algebras, Monoidal seeds, Quiver Hecke algebras, Richardson varieties}
\subjclass[2020]{17B37, 18N25, 13F60}

\date{August 15, 2026}

\begin{document}

\maketitle
\begin{abstract}

In this paper, when the quiver Hecke algebra $R$ is  symmetric, we present a new construction of quantum monoidal seeds $ \seed_{\uw,v}$ for $\Cwv$ using the reflection functors $\F_i$ and the newly introduced operators $\K_i$. The monoidal seed $ \seed_{\uw,v}$ of $\Cwv$ is obtained as a subseed of the monoidal seed $\wseed_{\uw,\kfc} $ of $\Cw$ constructed by applying $\K_i$ and $\F_i$ along the special KF sequence $\kfc$ determined by $\uw$ and $v$. 
We further prove that the monoidal seed $ \seed_{\uw,v}$ coincides with the set of all prime factors of the determinantial modules $M(w_{\le k } \La_{i_k}, v_{\le k} \La_{i_k}  )$ for $k \in [1,r]^{\uw}_v$.
Let $\ca_{\uw,v}$ (resp.\ $\uca_{\uw,v}$) denote the (resp.\ upper) quantum cluster algebra with non-invertible frozen variables associated with the initial seed $\Sigma_{\uw,v}$ determined by $\seed_{\uw,v}$.
We prove that the Grothendieck ring $K(\Cwv) $ of $\Cwv$ lies between $\ca_{\uw, v}$ and $\uca_{\uw, v}$, and conjecture that   
$\ca_{\uw, v} = \uca_{\uw, v}$, which implies $\Cwv$ provides the monoidal categorification of the cluster algebra $K(\Cwv)$ without the localization process.

\end{abstract}
\tableofcontents

\section{Introduction} 

\emph{Quiver Hecke algebras}, introduced  independently by Khovanov--Lauda and Rouquier (\cite{KL1, KL2, R08}), have occupied a central position in the study of the categorification of quantum groups (see \cite{Brundan13, KKKO18, Kas18} for example and references therein). 
Let $\cmC$ be a symmetrizable generalized Cartan matrix and let $U_q(\g)$ and $R$ be the associated quantum group and quiver Hecke algebra respectively. 
It was shown in \cite{KL1, KL2, R08} that  the \emph{quantum unipotent coordinate ring} $A_q(\n)$, the dual of the positive half $U_q^+(\g)$, is categorified by the category $R\gmod$ of finite-dimensional graded $R$-modules, which means that the Grothendieck ring $K(R\gmod)$ is isomorphic to the integral form of $A_q(\n)$. Let $\weyl$ be the Weyl group associated with $\cmC$. 
For any $w,v \in \weyl$ with $v \le w$ in the \emph{Bruhat order}, the subcategory $\Cw[w,v]$ of $R\gmod$ was introduced by the authors in \cite{KKOP18} (see \eqref{eq:Cwv}). The category $\Cwv$ is defined as a subcategory of the category $\Cw$ which provides a categorification of the quantum unipotent coordinate ring $A_q(\n(w))$, regarded as a quantum deformation of the coordinate ring $\C[N(w)]$ of the unipotent subgroup $N(w)$ associated with $w$. When $v = \id$, $\Cw[w,v]$ coincides with the category $\Cw$. 
The category $\Cw[w,v]$ categorifies the $q$-deformation $A_{w,v} \subset A_q(\n)$ of the \emph{doubly-invariant algebra} ${}^{N'(w)}\C[N]^{N(v)}$, where $N$ is the unipotent subgroup (see \cite[Remark 2.19, Theorem 2.20]{KKOP18} for details) and its localized category $\tCw[w,v]$ by central \emph{determinantial modules} categorifies the coordinate ring $\C[\Rwv]$ of the \emph{open Richardson variety} $\Rwv$ at the quantum level (\cite{KKOP23A}). The open Richardson varieties have deep connections to various areas of mathematics, including Kazhdan--Lusztig polynomials, total positivity and cluster algebras (see \cite{ KL79, KL80, Luz98, Kons99, Lec16} and references therein).

\emph{Cluster algebras}, introduced by Fomin and Zelevinsky \cite{FZ02}, are commutative algebras equipped with distinguished generators 
grouped into \emph{clusters} together with procedures called 
\emph{mutations} to connect one cluster to neighbor clusters.
 A cluster algebra arises naturally from the open Richardson variety $\Rwv$. 
It was shown by Leclerc (\cite{Lec16}) that, when $\cmC$ is of finite $ADE$ type,  the coordinate ring $\C[\Rwv]$ contains a cluster algebra arising from the module category of the preprojective algebra determined by $w$ and $v$. 
The initial seed $\seedLec$ of this cluster algebra was constructed for each reduced expression $\uw$ of $w$, and the cluster variables of $\seedLec$ are obtained by taking irreducible factors of the generalized unipotent minors determined by $\uw$ and $v$ (see \cite[Section 4.8.4]{Lec16}). 
M\'enard later presented an algorithm to compute initial seeds for $\C[\Rwv]$ 
: starting from the seed of $\C[\Rw[w,\id]]$ associated with $\uw$, one applies a suitable mutation sequence and then freezes and   deletes certain cluster variables
 (\cite{Menard22}). This construction is generalized by Bao and Ye for  a symmetrizable Kac-Moody $\g$ in \cite{BY25}. 

Leclerc conjectured that the cluster algebra $\ca(\seedLec)$ with invertible frozen variables coincides with $\C[\Rwv]$, and showed the conjecture in some special cases in \cite{Lec16}. 
In type $A$, Leclerc's conjecture was proved by Serhiyenko and Sherman-Bennett (\cite{SSB24}) by comparing with Ingermanson's result (\cite{Ingermanson19}). 
In an arbitrary finite type, the conjecture was proved in the setting of 
\emph{braid varieties} which are generalizations of open Richardson varieties (see \cite{CGGLSS25, GLSB26} and see also \cite{CGGSSBS25}). For the upper cluster algebra structure, it was proved that $\C[\Rwv]$ coincides with the upper cluster algebra $\uca(\seedLec) $ in \cite{CB22} for $ADE$ case, \cite{CGGLSS25, GLSB26} for finite type, and  \cite{BY25} for Kac-Moody case.

\emph{Monoidal categorification} provides a powerful framework to study cluster algebras from the viewpoint of monoidal categories. The notion of monoidal categorification for cluster algebras was introduced by Hernandez-Leclerc in studying the Grothendieck ring of the category of finite-dimensional modules over quantum affine algebras (\cite{HL10}). When $\cmC$ is of symmetric type, the category $\Cw$ provides a monoidal categorification of the quantum unipotent coordinate ring $A_q(\n(w))$ (\cite{KKKO18}).
That is, every cluster monomial corresponds to a simple module in $\catC_w$. 
Let $\uw$ be a reduced expression of $w$ of length $r = \ell(w)$. 
For each \emph{admissible chain} $\frakC = ( [a_k,b_k])_{1 \le k \le r}$ of $i$-boxes associated with $\uw$, one can construct a  quantum monoidal seed  $\wseed_\frakC$ of $\Cw$ whose \emph{cluster variable modules}, simple modules corresponding to cluster variables  under the categorification, are given by the \emph{determinantial modules} $\dM_{\uw}[a_k, b_k]$ for $k \in [1,r]$ (see \cite{KKKO18,KK24} and see also Section \ref{subsec:monoidal} for details). The determinantial modules $\dM_{\uw}[a_k, b_k]$ categorify the quantum unipotent minors in $A_q(\n(w))$. 
In the case of $\frakC = ( \{1,k] )_{1 \le k \le r}$, 
the monoidal seed $\wseed_\frakC$  categorifies the seed of $A_q(\n(w))$ given by Geiss-Leclerc-Schr\"{o}er in \cite{GLS11, GLS13} under the categorification (see \eqref{Eq: GLS seed}).
Since the algebra $\C[\Rwv[w, \id]]$ is isomorphic to the localized algebra $\widetilde{A_q(\n(w))}|_{q=1}$ and the localized category $\tCwv$ categorifies the coordinate ring $\C[\Rwv]$ of the open Richardson variety $\Rwv$, the approach by M\'enard  and Bao-Ye to computing seeds for $\C[\Rwv]$  provides 
 monoidal seeds in $\Cwv$ from those of $\Cw$. 
In \cite{Bi26}, Bi showed that the monoidal seed obtained in this way is an initial seed of a cluster algebra contained in $K(\catC_{w,v})$, and that every cluster monomial corresponds to a simple module in $\catC_{w,v}$, based on the result of \cite{KKKO18}. In particular, when $\g$ is of finite $ADE$ type,  
he showed  that the seed of Leclerc is same as the seeds of M\'enard and proved that the localized category $\tCwv$ provides a monoidal  categorification of the cluster algebra $\C[\Rwv]$. 
Although M\'enard's  algorithm  and Bao-Ye's generalization show the existence of a monoidal seed of $\Cwv$ as a \emph{subseed} of a monoidal seed of $\Cw$, the algorithm does not provide a direct construction of cluster variable modules in a monoidal seed of $\Cwv$ since it proceeds by applying a certain sequence of mutations.

In this paper, when the quiver Hecke algebra $R$ is \emph{symmetric}, we present a new construction of quantum monoidal seeds for $\Cwv$ using the \emph{reflection functors} $\F_i$ (\cite{KKOP26, Kato20} and Section \ref{Sec: RF}) and the newly introduced operators $\K_i$ (Section \ref{Sec: Ki}). 
For each reduced expression $\uw$ of $w$, our new construction (Theorem \ref{thm: Swv for Cwv}) provides a quantum monoidal seed $ \seed_{\uw,v}$ of $\Cwv$ whose cluster variable modules  can be  constructed by
the crystal operators, since the operators $\K_i$ and $\F_i$  admit  explicit descriptions in terms of crystals (Definition \ref{Def: Ki and K*i} and \eqref{Eq: F_i in crystal}). 
We further prove that the seed $ \seed_{\uw,v}$ coincides with the set of all prime factors of the determinantial modules $M(w_{\le k } \La_{i_k}, v_{\le k} \La_{i_k}  )$ for $k \in [1,r]^{\uw}_v$ and that, when $\g$ is of finite type $ADE$, $ \seed_{\uw,v}$ corresponds to the seed of Leclerc in \cite[Corollary 4.4]{Lec16} under the categorification (Proposition \ref{prop:SeedLeclerc}).
In contrast to the approaches of M\'enard and Leclerc, our construction determines directly the cluster variable modules in terms of $\K_i$ and $\F_i$
without performing mutations or relying on the factorization of generalized unipotent minors into irreducible factors.

The operator $\K_i$ (resp.\ $\K^*_i$) is defined in the \emph{symmetrizable setting} using the crystal operators $\tF_i$, $\tE_i$ (resp.\ $\tFs_i$, $\tEs_i$) together with the $\de_i$ defined in \eqref{Eq: def of di} (see Definition \ref{Def: Ki and K*i}).  
For any simple module $M$,  the $\K_i(M)$ (resp.\ $\K^*_i(M)$) can be understood as the simple module on the $(\tF_i, \tE_i)$-generated (resp.\ $(\tFs_i, \tEs_i)$-generated) string through $M$  which commutes with $\ang{i}$ \emph{minimally} (see \eqref{Eq: de Ki Ksi} and Lemma \ref{Lem: basics for Ki} \eqref{Lem: basics for Ki (ii)}), where $\ang{i}$ is the 1-dimensional simple $R(\al_i)$-module.
We show that $\K_i$ and $\K^*_i$ preserve both of the \emph{affreal} property (Lemma \ref{Lem: basics for Ki}) and the commuting property (Lemma \ref{Lem: KiM KiN}).  
In the case of simple modules satisfying $\ep_i=0$ (resp.\ $\ep_i^* =0$), we further investigate how the integer-valued invariants $\La$ change under the operators $\K_i$ (resp.\ $\K^*_i$), and prove that $\K_i$ (resp.\ $\K^*_i$) preserves both the integer-valued invariants $\de$ and the primeness (see Lemma \ref{lem:Kmain} and Lemma \ref{Lem: primeness}).  

The operators $\K_i$ together with the reflection functor $\F_i$ lead to a new construction of monoidal seeds. \emph{We assume that $R$ is symmetric}.  
For any reduced expression $\uw = (i_1, i_2, \ldots, i_r)$ of an element $w \in \weyl$ and any sequence
$\kfc = (c_1, c_2, \ldots, c_r) \in \{ \kt, \ft \}^r$  (that we call
a \emph{KF sequence}),  we define the family of simple modules 
$$
\wseed_{\uw,\kfc} =  (M_k)_{k\in[1,r]}
$$ 
by applying $\K_i$ and $\F_i$ to one of $\ang{i_k}$ and the determinantial module $\dM(w_{\ge k}\La_{i_k},\La_{i_k})$ along the sequence $\kfc$ (see \eqref{Eq: Mk G} for the definition). Proposition \ref{prop: Twc} says that the family $\wseed_{\uw,\kfc}$ is a monoidal seed of $\Cw$ in the \emph{canonical mutation class} (see Remark \ref{Rmk: B from M}). 
We remark that, in general, the seed $\wseed_{\uw,\kfc}$  differs  from the monoidal seeds arising from \emph{admissible chains} of \emph{$i$-boxes} associated with $\uw$ (see Example \ref{Ex: Cw}). 

Let $w,v \in \weyl$ with $v \le w$ and take $i \in I$ such that 
$s_iw < w$.
 One of the key observations  (Proposition \ref{prop:stability}) is that,
\bna
\item
we have $\ang{i} \in \Cwv$
and
$\K_i$ sends simple modules in $\Cwv[s_iw,v]$ to simple modules in $\Cwv[w,v]$
whenever $v<s_iv$, 
\item $\F_i$ sends simple modules in $\Cwv[s_iw,s_iv]$ to simple modules in $\Cwv[w,v]$ whenever $ s_iv < v $.
\ee
This leads us to a new construction of monoidal seeds for $\Cwv$.
For any reduced expression $\uw$ of $w$, we define the subset $[1,r]_v^{\uw}$ of the interval $[1,r]$ by using $\uw$ and $v$ (see \eqref{Eq: [1,r]wv}) and consider the monoidal seed $ \wseed_{\uw,\kfc} =  (M_k)_{k\in[1,r]}$ of $\Cw$ associated with $\uw$ and the \emph{special KF sequence} $\kfc$ determined by $[1,r]_v^{\uw}$ (see \eqref{Eq: ct for T}). We then define 
$$
\seed_{\uw, v} =  ( M_k)_{k\in [1,r]_v^{\uw}} 
$$
as a subset of $\wseed_{\uw,\kfc}$. In Theorem \ref{thm: Swv for Cwv}, we prove that $\seed_{\uw, v}$ is a completely admissible monoidal seed (see
Definition~\ref{Def: def MC}) in  $\Cwv$ by investigating the relationship between the two seeds in $\Cwv[s_iw]$ and $\Cwv[s_iw, v]$ under the operator $\K_i$ (see Section \ref{Sec: pair}).
Theorem \ref{thm: one mutation class for Cwv} says that the mutation class of the seed $\seed_{\uw, v} $ does not depend on the choice of  reduced expressions $\uw$.
The set $\sfJ_{\uw, v}^{\fr}$ of the frozen vertices of $\seed_{\uw, v}$ is described explicitly in  \eqref{Eq: index for Cwv} and Lemma \ref{Lem: frozen Jfr}. 
Note that the number $|\sfJ_{\uw, v}^{\fr}|$ is related to the coefficient of the second largest
power in \emph{Kazhdan-Lusztig R-polynomials} (see \cite[Section 3.4]{ELPSW26} and Remark \ref{Rmk: KL R poly}).  
It turns out that the seed $\seed_{\uw, v}$ is equal to the seed of Leclerc 
(see Proposition \ref{prop:SeedLeclerc}).
Hence  \cite[Theorem 5.11]{Bi26} implies that $\seed_{\uw, v}$ is the same with the seed of 
M\'enard and Bao-Ye's generalization when   $\g$ is symmetric.  
We illustrate the monoidal seeds $\wseed_{\uw,\kfc}$ and $\seed_{\uw,v}$  with explicit examples (see Example \ref{Ex: Cw} and \ref{Ex: Cwv}) in the paper.

We also investigate the compatibility of $\K_i$ and  $\F_i$ with \emph{Laurent families} introduced in \cite{KKOP24B}. The Laurent family is a special commuting family of affreal simple modules which satisfies a categorical property analogous to the \emph{Laurent phenomenon} (see Section \ref{Sec: prep}). 
In Theorem \ref{th:cao}, we give a formula for computing $\La$ in terms of $g$-vectors in the context of Laurent families. This can be viewed as a generalization of the formula expressed in terms of the \emph{tropical invariant} by Cao (\cite[Theorem 5.16]{Cao23}) from cluster variable modules to real simple modules (Remark \ref{Rmk: Cao}).
 We show that $\K_i$ and $\F_i$ preserve Laurent families (Proposition \ref{prop: laurent}), and prove that
$$
\ca_{\uw, v} \subset K(\Cwv) \subset \uca_{\uw, v},
$$
where $\ca_{\uw, v}$ and $\uca_{\uw, v}$ are
the quantum cluster algebra and the upper quantum cluster algebra with \emph{non-invertible frozens} arising from the initial seed $\Sigma_{\uw, v}$ determined by $\seed_{\uw,v}$, respectively (Theorem \ref{Thm: A K U}). 
We conjecture that   
$\ca_{\uw, v} = \uca_{\uw, v}$, which implies $\Cwv$ provides the monoidal categorification of $K(\Cwv)$ \emph{without the localization process} (see Conjecture \ref{Conj: MC}).

This paper is organized as follows.
In Section \ref{Sec: Prelim}, we briefly review the necessary background including quantum groups, quiver Hecke algebras and Laurent families, and introduce the new operators $\K_i$ and $\K_i^*$. 
In Section \ref{Sec: Laurent Cwv}, we investigate how $\K_i$ and $\F_i$ act on Laurent families in $\Cwv$. 
Section \ref{Sec: MC Cw} reviews the notion of monoidal categorification, and Section \ref{Sec: seed for Cw} explains how the monoidal seeds $\wseed_{\uw,\kfc}$ for $\Cw$ are constructed  using $\K_i$ and $\F_i$. 
Section \ref{Sec: Swv for Cwv} introduces the new construction of monoidal seeds $\seed_{\uw,v}$ of $\Cwv$ in terms of $\K_i$ and $\F_i$, and Section \ref{Sec: mutation invariance} shows the mutation invariance of the monoidal seeds $\seed_{\uw, v} $.
Section \ref{Sec: cluster alg str of Cwv} shows  the cluster algebra structure on $\Cwv$.

\medskip
\noindent
{\bf Acknowledgments} \ 
The second, third, and fourth authors gratefully acknowledge the
hospitality of RIMS, Kyoto University during their visit in 2026. 
E.\ P.\ would like to thank the Department of Mathematics at the University of Connecticut for its hospitality and excellent research environment during his visit.

\vskip 1em 

\subsection*{Convention}
Throughout this paper, we use the following convention.
\bnum
\item For a statement $P$, we set $\delta(P)$ to be $1$ or $0$ depending on whether $P$ is true or not. In particular, $\delta_{i,j}=\delta(i =j)$
is the Kronecker delta. 

\item For $a\in \Z \cup \{ -\infty \} $ and $b\in \Z \cup \{ \infty \} $ with $a\le b$, we set 
\begin{align*}
& [a,b] =\{  k \in \Z \ | \ a \le k \le b\}, &&  [a,b) =\{  k \in \Z \ | \ a \le k < b\}, \allowdisplaybreaks\\
& (a,b] =\{  k \in \Z \ | \ a < k \le b\}, &&  (a,b) =\{  k \in \Z \ | \ a < k < b\},
\end{align*}
and call them \emph{intervals}. 
When $a> b$, we understand them as empty sets.
\item For a monoid $A$ and an index set $K$, we denote
by $A^{\oplus K}$ the subset of $A^K$ consisting of $\bfa=(a_k)_{k\in K}$
such that $a_k$ coincides with the identity element except finitely many $k\in K$.
\ee

\vskip 2em

\section{Preliminaries} \label{Sec: Prelim}

\subsection {Quantum cluster algebras} \label{Sec: QCA}

In this subsection, we briefly recall the definition of a quantum cluster algebra (\cite{BZ05}). 

Let $\sfJ$ be a finite index set with a decomposition $\sfJ = \sfJ^\ex \sqcup \sfJ^\fr$.
The elements in $\sfJ^\ex$ (resp.\ $\sfJ^\fr$) are called \emph{exchangeable} (resp.\ \emph{frozen}) indices. Let $q$ be an indeterminate. 
For a skew-symmetric integer-valued matrix $L = (l_{ij})_{i,j \in \sfJ}$, a family $x = \{x_i\}_{i\in \sfJ} $ in a skew field $ K$ over $\Q(q^{1/2})$
is \emph{$L$-commuting} if it satisfies $x_ix_j = q^{l_{ij}}x_jx_i $ for $ i,j \in \sfJ.$
We denote by $\T_x$ the  $\Zqh$-subalgebra of $K$ generated by $x$, which is called the \emph{quantum torus} associated with $x$. 
For any $\bfa = (a_i)_{i \in \sfJ} \in \Z^\sfJ$, we set
$$
x(\bfa) \seteq  q^{\frac{1}{2} \sum_{i>j}a_i a_j l_{ij}} \prod^{\longrightarrow}_{i \in \sfJ} x_i^{a_i},
$$
where the ordered product is taken in the decreasing order with respect to a total order on $\sfJ$. 
Note that $x(\bfa)$ does not depend on the choice of a total order.

Let $\tB = (b_{ij})_{i \in \sfJ,j\in \sfJ^\ex}$ be an \emph{exchange matrix}, which is an integer-valued matrix whose principal part $(b_{ij})_{i,j\in \sfJ^\ex}$ is skew-symmetric. We often write $\tB_{ij}$ for the entry $b_{ij}$. 
We say that a pair $(L,\tB)$ is \emph{compatible} if there is a positive integer $d$ such that 
\begin{align*}
\sum_{k \in \sfJ} l_{ik} b_{kj} = d\delta_{i,j} \qt{for any $i\in \sfJ$ and $j\in\sfJ^\ex$. }
\end{align*}
A triple $\Sigma = ( \{ x_k \}_{k \in \sfJ}, L ,\tB)$ is called a \emph{quantum seed} in $K$ if $\{ x_k \}_{k \in \sfJ}$ is an algebraically independent $L$-commuting family in $K$ 
and $(L, \tB)$ is compatible.

For any $k \in \sfJ^\ex$, the \emph{mutation} $\mu_k(\Sigma)$ of a quantum seed $\Sigma = ( \{ x_k \}_{k \in \sfJ}, L ,\tB)$ is written as
$$
\mu_k(\Sigma) \seteq  \bl \mu_{k} (x), \mu_k(L), \mu_k(\tB)\br,
$$ 
where the mutation $\mu_k(\tB)$ (resp.\ $\mu_k(L)$) of $ \tB$ (resp.\ $L$) is described in \cite[Section 5.2]{KKKO18}, and  $\mu_{k} (x) = \{ x_i' \}_{i\in \sfJ} $ is defined as 
$$
x_i' \seteq \bc
x(\bfa') + x(\bfa'') & \text{ if } i =k, \\
x_i  & \text{ if } i \ne k,
\ec
$$
where $ \bfa' = (a_i')_{i\in \sfJ}$ and $ \bfa'' = (a_i'')_{i\in \sfJ}$ are defined as
$$
a'_i = \bc
-1 & \text{ if } i =k, \\
\max(0,b_{ik}) & \text{ if } i \ne k,
\ec
{\qtq}
a''_i = \bc
-1 & \text{ if } i =k, \\
\max(0,-b_{ik}) & \text{ if } i \ne k.
\ec
$$
We say two seeds $\Sigma$ and $\Sigma'$ are \emph{mutation equivalent} if $\Sigma'$ can be obtained from $\Sigma$ by applying suitable mutations up to a permutation of the index set $\sfJ$. 
Note that mutation equivalence gives an equivalence relation on the set of all quantum seeds. The equivalence classes arising from this relation are called \emph{mutation classes}.  

Let $\Sigma = ( \{ x_k \}_{k \in \sfJ}, L ,  \tB)$ be a quantum seed in a skew field $K$ over $\Qqh$. 
The \emph{quantum cluster algebra} $\ca (\Sigma)$ associated with $\Sigma$ is the $\Zqh$-subalgebra of $K$ generated by all cluster variables in quantum seeds obtained from $\Sigma$ by any finite sequence of mutations. For a quantum seed $\Sigma' = ( \{ x_k' \}_{k \in \sfJ}, L' ,  \tB')$ mutation equivalent to $\Sigma$, the \emph{quantum Laurent phenomenon} (\cite{BZ05}) says that $\ca (\Sigma)$ is contained in the quantum torus $\T_{\Sigma'}$ generated by $\{ x_k' \}_{k \in \sfJ}$ inside $K$. We define $\uca (\Sigma)$ to be the intersection of all quantum tori $\T_{\Sigma'}$ in $K$ for all quantum seeds $\Sigma'$ which are mutation equivalent to $\Sigma$. The algebra $\uca (\Sigma)$ is called the \emph{quantum upper cluster algebra} associated with $\Sigma$. 
Note that, by the definition, $\ca (\Sigma)$ is a $\Zqh$-subalgebra of $\uca (\Sigma)$.

\subsection{Quantum groups} \label{Sec: quantum group}
In this subsection, we briefly review quantum groups (\cite{KashBook02, LusztigBook}). 

A {\it Cartan datum} is a family $ (\cmC,\wlP,\Pi,\wlP^\vee,\Pi^\vee,(\cdot,\cdot)) $
 consisting of  a symmetrizable generalized  \emph{Cartan matrix} $\cmC=(c_{ij})_{i,j\in I}$,
 a free abelian group $\wlP$  called the \emph{weight lattice}, its dual $\wlP^{\vee} \seteq \Hom_{\Z}( \wlP, \Z )$ called the \emph{co-weight lattice}, the set $\Pi \seteq  \{ \alpha_i \in \wlP \mid i\in I \}$ of \emph{simple roots}, the set $\Pi^{\vee} \seteq  \{ h_i \in \wlP^\vee \mid i\in I\}$ of \emph{simple coroots} and a 
$\Q$-valued symmetric bilinear form $(\cdot,\cdot)$ on $\wtl$  such that    
 \bnum
\item $\lan h_i, \alpha_j \ran = a_{ij}$ for $i,j \in I$,
\item $\Pi$ is linearly independent over $\Q$,
\item for each $i\in I$, there exists $\Lambda_i \in \wlP$ such that $\lan h_j,\Lambda_i \ran =\delta_{j,i}$ for all $j \in I$.
\item 
$(\al_i,\al_i)\in 2\Z_{>0} $ and 
$ \lan h_i,  \lambda\ran = {2 (\alpha_i,\lambda)}/{(\alpha_i,\alpha_i)}$. 
\ee
We call $\La_i$ the $i$-th \emph{fundamental weight} for $i\in I$, and write $\wlP_+ \seteq  \{ \La \in \wlP \mid \ang{h_i, \La} \ge 0  \text{ for any } i\in I\}$
 and call it the \emph{dominant weight lattice}. 
Let $\rlQ \seteq  \sum_{i\in I} \Z \al_i$ be the \emph{root lattice} and
let $\rlQ_+ \seteq  \sum_{i\in I} \Z_{\ge0} \al_i$ and
$\nrtl\seteq  \sum_{i\in I} \Z_{\le0} \al_i$ be the positive and negative root lattice respectively.
We set $\height{\beta} \seteq  \sum_{i\in I} |a_i|$ 
for an element $\beta = \sum_{i\in I} a_i \al_i \in \rlQ$. 
We denote by $\prD$ (resp.\ $\nrD$) the set of all positive roots (resp.\ negative roots).

We denote by $\weyl = \langle s_i \mid i\in I \rangle$ the \emph{Weyl group} associated with $\cmC$, where $s_i\in\Aut(\wtl)$ is the $i$-th reflection defined by
$$ 
s_i\la = \la -\lan h_i,\la \ran\al_i \quad \text{for $\la \in \wlP$}.  
$$
For $w \in \weyl$, an expression $\uw = s_{i_1} s_{i_2} \cdots s_{i_r} $ of $w$ is \emph{reduced} if $r$ is minimal among all such expressions. 
For simplicity, we write $\uw = (i_1, i_2, \ldots, i_r)$ if no confusion arises. 
For $w,v\in \weyl$, we write $v \le w$ if there is a reduced expression of $w$ containing a reduced expression of $v$ as a subexpression. This partial order $\le$ is called the \emph{Bruhat order} on $\weyl$.

Let $q$ be an indeterminate. 
The \emph{quantum group} $U_q(\g)$ associated with 
a Cartan datum $(\cmC,\wlP,\Pi,\wlP^\vee,\Pi^\vee,(\cdot,\cdot))$ 
is the associative algebra over $\Q(q)$
generated by $e_i,f_i$ $(i \in I)$ and $q^h$ $(h \in \wlP^\vee)$ satisfying certain defining relations (see \cite{LusztigBook} for details), and 
let $U_q^-(\g)$ (resp.\ $U_q^+(\g)$) be the subalgebra of $U_q(\g)$  generated by $f_i$ (resp.\ $e_i$) for all $i \in I$. We also denote by $U(\g)$ the universal enveloping algebra of $\g$ and by $U^-(\g)$ (resp.\ $U^+(\g)$) the corresponding half of $U(\g)$ generated by $f_i$ (resp.\ $e_i$) for all $i \in I$. 

A \emph{crystal} is a set $B$ with  maps $\wt\col  B \rightarrow \wlP$,  $\varphi_i$,  $\ep_i \col  B \rightarrow \Z \,\sqcup \{-\infty\}$
and $\te_i$, $\tf_i \col  B \rightarrow B\,\sqcup\{0\}$ 
for $i\in I$ which satisfy certain conditions determined by the Cartan matrix $\cmC$. We set $B(\infty)$ to be the \emph{infinite crystal} of $U_q^-(\g)$. 
The $\Q(q)$-antiautomorphism $*$ on $U_q(\g)$ defined by
\begin{align*} 
	(e_i)^* = e_i, \qquad (f_i)^* = f_i, \qquad  (q^h)^*  = q^{-h}, 
\end{align*}
gives another crystal structure $ \te_i^*$, $ \tf_i^*$, $\ep_i^*$, $\varphi_i^*$  on $B(\infty)$. For details on crystals, we refer the reader to \cite{K95, KashBook02}. 

For any $\Lambda \in \pwtl$, we denote by $V_q(\La)$ the irreducible highest weight $U_q(\g)$-module of highest weight $\La$ and by $B(\La)$ its crystal.
Let $u_\La$ be the highest weight vector of $V_q(\La)$.
For any $w\in \weyl$,
$\dim(V_q(\La)_{w\La})=1$ and
let $u_{w\La}$ be a generator of  $V_q(\La)_{w\La}$, normalized suitably
(see \cite[\S\; 12]{K95}).
 The vector  $u_{w\La}$ is called the \emph{extremal vector} of $V_q(\La)$ of weight $w\La$.  We also denote by $V(\La)$ the highest weight $U(\g)$-module with highest weight $\La$.

\Lemma\label{lem:bru}
Let $w,v\in\weyl$.
Then the following conditions are equivalent:
\bna
\item $v\le w$,
\item   $u_{w\La}\in U^-(\g)u_{v\Lambda}$
for any $\Lambda\in\pwtl$,
\item   $u_{v\La}\in U^+(\g)u_{w\Lambda}$
for any $\Lambda\in\pwtl$,

\ee
\enlemma
\Proof
 We denote by $( - , - )$ the Shapovalov bilinear form on the highest weight module $V(\La)$, and by $\sigma: U(\g) \buildrel \sim \over \longrightarrow U(\g)$ the anti-involution sending $e_i \mapsto f_i$ and $f_i \mapsto e_i$ for $i\in I$.
Since the Shapovalov  form is non-degenerate and $ ( u x, y ) =  (  x, \sigma(u)y ) $ for any $x,y\in V(\Lambda)$ and $u\in U(\g)$, it is easy to see that  (b) and (c) are equivalent.

Since it was shown in \cite[Corollary 3.2.2]{K93A} that (a) implies (c), we will focus on proving that  (c) implies (a). 
Let us show it  by induction on $\ell(v) + \ell(w)$.
Since it is trivial if $\ell(w)=0$, let us assume that $\ell(w)>0$.
Take $i\in I$ such that $s_i w< w$.
Let $\gp_i$ be the Lie subalgebra of $\g$ generated by the Borel $\gb$ and $f_i$.
Then $f_i u_{w\La}=0$ and $U^+(\g)u_{w\Lambda} = U(\gp_i)u_{w\Lambda}$. 

Assume first that $s_i v < v$. Since $U(\gp_i)$ is invariant by 
$s_i={\rm exp}(f_i){\rm exp}(-e_i){\rm exp}(f_i)$, 
we have
\eqn
s_i\bl U^+(\g)u_{w\Lambda}\br
&&=s_i\bl U(\gp_i)u_{w\Lambda}\br=
U(\gp_i)u_{w\Lambda}\\
&&=U^+(\g)\C[f_i] u_{w\Lambda}
=U^+(\g)u_{w\Lambda}
\ni u_{v\La},
\eneqn
which says that 
$ u_{s_iv\La} \in  U^+(\g)u_{ w\Lambda} $.
Then the induction hypothesis implies
$s_iv\le w$. As $s_i v< v$ and $ s_i w < w$, we conclude that $v\le w$.

\smallskip
We now assume  that $s_i v > v$. Since  $e_iu_{v\La}=0$ and 
$$
u_{v\La} \in U^+(\g)u_{w\La} = U(\gp_i) u_{s_iw\La} = \C[f_i] 
(U^+(\g)u_{s_i w\La}\cap\Ker e_i),
$$
we have $ u_{v\La} \in  U^+(\g)u_{s_i w\La}\cap\Ker e_i $. By the induction hypothesis, we have 
$ v \le s_i w$, which implies $v \le w$.
\QED

\subsection{Quiver Hecke algebras} \label{Sec: QHA}

Let $\bR$ be a field.
A {\em quiver Hecke datum} associated with $(\cmC,\wlP,\Pi,\wlP^\vee,\Pi^\vee,(\cdot,\cdot)) $ is a family  $\{ \qQ_{i,j}(u,v)\}_{i,j\in I} \subset \bR[u,v]$ of the form
\begin{align*}
\qQ_{i,j}(u,v) =\bc
                   \sum\limits
_{p(\alpha_i , \alpha_i) + q(\alpha_j , \alpha_j) = -2(\alpha_i , \alpha_j) } t_{i,j;p,q} u^pv^q &
\text{if $i \ne j$,}\\[3ex]
0 & \text{if $i=j$,}
\ec
\end{align*}
such that $t_{i,j;-a_{ij},0} \in  \bR^{\times}$ and
$\qQ_{i,j}(u,v)= \qQ_{j,i}(v,u)$ for any $i,j\in I.$ For $\beta\in \rlQ_+$ with $\height{\beta}=n$, define
$$
I^\beta\seteq  \Bigl\{\nu=(\nu_1, \ldots, \nu_n ) \in I^n \bigm| \sum_{k=1}^n\alpha_{\nu_k} = \beta \Bigr\}.
$$
The {\em quiver Hecke algebra} $R(\beta)$ associated with $\st{\qQ_{i,j}(u,v)}_{i,j\in I}$
is the $\Z$-graded algebra over $\bR$ generated by
$$
\{e(\nu) \mid \nu \in I^\beta \}, \quad  \{x_k \mid 1 \le k \le n \},
 \quad \{\tau_t \mid 1 \le t \le n-1 \}
$$
satisfying certain defining relations determined by $\st{\qQ_{i,j}(u,v)}_{i,j\in I}$ (see \cite{KL1, KL2, R08} and see also \cite{KKKO18, KKOP18} for details).
The $\Z$-grading of $R(\beta)$ is defined as
\begin{align*}
\deg(e(\nu))=0, \quad \deg(x_k e(\nu))= ( \alpha_{\nu_k} ,\alpha_{\nu_k}), \quad  \deg(\tau_t e(\nu))= -(\alpha_{\nu_{t}} , \alpha_{\nu_{t+1}}).
\end{align*}
The quiver Hecke algebra $R$ is \emph{symmetric} if $\cmC$ is symmetric and $\qQ_{i,j}(u,v)$ is a polynomial in $u-v$ for all $i,j\in I$. 

For an $R(\beta)$-module $M$, we set $\wt(M) \seteq  -\beta \in -\rlQ_+$.

We set $R(\beta)\gmod$ to be the category of finite-dimensional graded $R(\beta)$-modules, and write $R\gmod \seteq  \bigoplus_{\beta \in \rlQ_+} R(\beta)\gmod$.
 {\em Throughout the paper, an $R(\beta)$-module means a graded $R(\beta)$-module and a homomorphism means a graded homomorphism.}

For $\beta,\gamma \in \rlQ_+$, we define $e(\beta,\gamma)\seteq \sum_{\mu \in I^{\beta}, \, \nu\in I^{\gamma}} e(\mu ,\nu)\in R(\beta+\gamma)$, where 
$e(\mu ,\nu)$ denotes the concatenation $e(\mu_1,\ldots,\mu_m,\nu_1,\ldots,\nu_n)$ for $\mu=(\mu_1,\ldots,\mu_m)$ and $\nu = (\nu_1,\ldots,\nu_n)$.
 By using the algebra embedding
$R(\beta)\tens R(\gamma)\subset e(\beta,\gamma) R(\beta+\gamma)e(\beta,\gamma)$, 
we define 
\eqn
M\conv N\seteq R(\beta+\gamma)e(\beta,\gamma) \otimes_{R(\beta)\tens R(\gamma)} (M\tens N)
\eneqn
for an $R(\beta)$-module $M$ and an $R(\gamma)$-module $N$, and call it
the \emph{convolution product} of $M$ and $N$. 
The abelian category $R\gmod$ is a monoidal category with
the
convolution product as its tensor product. 

Let $q$ denote the {\em grading shift functor} given by $(qM)_k = M_{k-1}$ for a graded module $M = \bigoplus_{k \in \Z} M_k $.
For graded $R(\beta)$-modules $M$ and $N $, we write $\Hom_{R}(M,N)$ for the space of degree preserving homomorphisms, and define
\[
\HOM_{R}( M,N ) \seteq \bigoplus_{k \in \Z} \HOM_{R}( M,N )_k\qt{
where $\HOM_{R}( M,N )_k\seteq\Hom_{R}(q^{k}M, N)$.}
\]
For a homomorphism  $f \in \HOM_{R}(M, N)_a$, 
we set $ \deg(f) \seteq a$, and simply write $f \cl M \to N$ if it is not necessary to specify the grading. 
We also  write  $M\simeq N$ to indicate that $M$ and $N$ are isomorphic up to a grading shift if no confusion arises.

\medskip
 For $n \in \Z_{>0}$ and a simple module $M$, we write $M^{\circ n} \seteq  M \conv M^{\circ n-1}$, where we understand $M^{\circ 0} $ is the trivial module $ \one$.
We say that $M$ and $N$ \emph{ commute } if $M \conv N $ is isomorphic to $ N \conv M$, and say that $M$ and $N$ \emph{ strongly commute } if $M\conv N$ is simple. 
Note that, if $M$ and $N$  strongly commute, then they commute.
A simple $R(\beta)$-module $M$ is \emph{real} if $M^{\circ 2}$ is simple.

For $R$-modules $X$ and $Y$, we set 
$$X\hconv Y \seteq  \hd(X\conv Y) \qtq X\sconv Y \seteq  \soc(X\conv Y)$$ 
where $\hd(M)$ (resp.\ $\soc(M)$) denotes the head (resp.\ socle) of a module $M$.

For $M \in R\gmod$, the dual $R(\beta)$-module $M^* \seteq \Hom_\bR(M,\bR)$
is defined by $(r \cdot f)(u)= f(\psi(r)u)$ for $r \in R(\beta)$,  
$u \in M$,  $f \in M^*$, 
where $\psi$ is the $\bR$-algebra anti-involution on $R(\beta)$ fixing the generators.
 A simple $R(\beta)$-module $M$ is called \emph{self-dual}
if there exists an isomorphism between $M^* $ and $ M$ preserving the grading.
For any simple module $M$, there exists a unique $n\in\Z$ such that $q^nM$ is self-dual.

For $i\in I$ and $M \in R(\beta)\gmod$, we set
\begin{align*}
\E_iM &=e(i,*)M, \qquad \ep_i(M) = \max\{ n \ge 0 \mid \E_i^n M\ne0 \}, \\ 
\E^*_iM &=e(*,i)M,\qquad \ep^*_i(M) = \max\{ n \ge 0 \mid \E_i^{*n} M\ne0 \},
\end{align*}
where $e(i,*)=e(\al_i,\beta-\al_i)$ and $e(*,i)=e(\beta-\al_i,\al_i)$. 
We regard $\E_i$ and $\E_i^*$ as exact functors from $R(\beta)\gmod$ to $R(\beta-\al_i)\gmod$.
We denote by $\ang{i}$  a unique simple $R(\al_i)$-module of degree $0$. 
For any simple module $X \in R(\beta)\gmod$, we define  
\begin{align*}
\tF_i(X) &\seteq  \ang{i}\hconv X, \qquad   \qquad \tE_i(X) \seteq  \soc( \E_iX) \simeq\hd(\E_iX), \\
\tFs_i(X) &\seteq  X \hconv \ang{i}, \qquad   \qquad \tEs_i(X) \seteq  \soc( \E^*_iX) \simeq\hd(\E^*_iX).
\end{align*}
Note that $\tF_i(X)$ and $\tFs_i(X)$  (resp.\ $\tE_i(X)$ and $\tEs_i(X)$) are simple modules in $R(\beta+\al_i)\gmod$ (resp.\ $R(\beta-\al_i)\gmod$). We write 
$$
\tEm_i (X) \seteq  \tE_i^{\ep_i(X)}(X) {\qtq} \tEsm_i (X) \seteq  (\tEs_i)^{\ep^*_i(X)}(X).
$$
We remark that the operators 
$\tF_i$, $\tE_i$, $\tFs_i$ and $\tEs_i$ correspond to the crystal operators $\tf_i$, $\te_i$, $\tf^*_i$ and $\te^*_i$ under the crystal-theoretic categorification (\cite{LV11}).

\medskip
If $(i_1, i_2, \ldots, i_r) \in I^r$ satisfies the conditions,
\bna \item $(\al_{i_{k}}, \al_{i_{k+1}}) < 0$ for any $k\in [1,r-1]$,
\item $i_k\not=i_{k+2}$ for any $k\in[1,r-2]$ such that
$\ang{h_{i_k},\al_{i_{k+1}}}=-1$,
\ee
 then we denote by
$\ang{i_1, i_2, \ldots, i_r}$ the $1$-dimensional $R$-module,
where $ x_t, \tau_l$ act as $0$ on
$\ang{i_1, i_2, \ldots, i_r}$ and $e(i_1,i_2,\ldots, i_r)\ang{i_1, i_2, \ldots, i_r}
=\ang{i_1, i_2, \ldots, i_r}$.

When $(\al_i, \al_j) < 0$,  any simple module in $R(\al_i+\al_j)\gmod$
is isomorphic to either $\ang{i,j}$ or $\ang{j,i}$. 
For $n\in \Z_{\ge0}$, we simply write 
$\ang{i^n} \seteq  q_i^{n(n-1)/2} \ang{i}^{\circ n} $.
It is a unique self-dual simple $R(n\al_i)$ module. 

\medskip

A pair $(M,N)$ is \emph{$\La$-definable} if $\HOM (M\conv N,N\conv M)=\cor \rmat{}$ for some non-zero homomorphism $\rmat{}$, which is called  the \emph{R-matrix}. 
In this case,
define 
\begin{align*}
\La(M,N)\seteq  \deg(\rmat{})
\end{align*} 
and
$$
\tLa(M,N)\seteq  \dfrac{1}{2}  \bl \La(M,N)+ (\wt(M),\wt(N)) \br\in\Z .
$$
If a simple module $M$ is \emph{affreal}, which means that $M$ is real and has an \emph{affinization}, then $(M,N)$ and $(N,M)$ are $\La$-definable for any simple $N$ (\cite{KP18}).
When $R$ is symmetric, any simple has an affinization (\cite[Section 2.2]{KKKO18}). 
Note that,  when either $M$ or $N$ is an affreal simple module, 
$M$ and $N$ commute if and only if $M$ and $N$ strongly commute (\cite{KKKO15}).   
If  both of $(M,N)$ and $(N,M)$  are $\La$-definable, then  we say that $(M,N)$ is {\em $\de$-definable} and define
$$
\de(M,N) \seteq \dfrac{1}{2}(\La(M,N)+\La(N,M)).
$$
For $i\in I$ and a simple module $M$, we define  
\begin{align} \label{Eq: def of di}
\de_i(M) \seteq   \ep_i(M) + \ep^*_i(M) + \ang{h_i, \wt(M)}.
\end{align}
Note that $ \frac{(\al_i, \al_i)}{2} \de_i(M) = \de(\ang{i}, M)$ (\cite[Lemma 2.15]{KKOP23A}).

A sequence $ \underline{L} = (L_1,\ldots,L_r)$ of simple modules is \emph{almost affreal} if all $L_i$ $(1 \le i \le r)$ are  affreal  except for at most one.
 A sequence $\underline{L} = (L_1,\ldots,L_r)$ is called $\La$-definable
if $(L_i,L_j)$ is $\La$-definable for any $i,j$ such that
$1\le i<j\le r$.  An almost \afr sequence is $\La$-definable.
 An almost \afr  sequence $\underline{L} = (L_1,\ldots,L_r)$
is said to be a \emph{normal sequence} if the composition
\begin{align*}
\rmat{\underline{L} }  \seteq \prod_{1 \le i < k \le r} \rmat{L_i,L_k }   =& (\rmat{L_{r-1},L_r}) \circ \cdots (\rmat{L_2,L_r}\circ \cdots \circ \rmat{L_2,L_3}) \circ (\rmat{L_1,L_r} \circ \cdots \circ \rmat{L_1,L_2}) \\
& \colon \; q^{\sum_{1 \le i < k \le r} \La(L_i,L_k)} L_1 \conv \cdots \conv L_r \To L_r \conv \cdots L_1 
\end{align*}
does not vanish. Note that, if $(L,M,N)$ is normal, then the image of $\rmat{L,M,N}$ is simple and coincides with $\hd(L \conv M \conv N)$ up to grading shifts and 
\bna
\item $\La(L,M\hconv N) =\La(L,M)+\La(L,N)$, 
\item $\La(L\hconv M, N) =\La(L,N)+\La(M,N)$
\ee
(see \cite[Section 2.3]{KK19} for details).

We say that a simple $S$ is a {\em \cfac} of a simple $M$ if there exists a simple $X$ such that $M\simeq S\conv X$. 
A simple $S$ is called {\em prime} if $S\simeq X \conv Y$ for $X,Y \in R\gmod$, then $X\simeq \one$ or $Y\simeq \one$.

\Lemma\label{lem:ifac}
Let $i\in I$ and let $M$ be a simple module in $R\gmod$.
Then the following conditions are equivalent:
\bna
\item $\ang{i}$ is a \cfac of $M$,
\item $\ep_i(M)>0$ and $\de_i(\tE_iM)=0$.
\item $\ep^*_i(M)>0$ and $\de_i(\tEs_iM)=0$.
\item
$\ep_i^*\bl\tEm_i M\br<\ep^*_i(M)$,

\item
$\ep_i\bl\tEsm_i M\br<\ep_i(M)$,
\ee
\enlemma
\Proof
Set $N\seteq \tEm_i \tEsm_i (M) \simeq \tEsm_i \tEm_i (M)$. Applying \cite[Proposition 2.16]{KKOP23A} to $N$, one can show that 
 the conditions are equivalent to $\de_i(N)<\ep_i(M)+\ep^*_i(M)$.
\QED

\subsection{Categories $\Cw$ and $\Cwv$} \label{Sec: Cwv} \
In this section, we briefly recall the  subcategories $\Cw$ and $\Cwv$ of $R\gmod$ (\cite{KKKO18, KKOP18}).

For an $R(\beta)$-module $M$, we define
\begin{align*}
\gW(M) & \seteq \{ \gamma \in \rl_+\cap (\beta-\rlQ_+) \mid e(\gamma,\beta-\gamma)M \ne 0 \}, \\
\sgW(M) & \seteq \{ \gamma \in \rl_+ \cap (\beta-\rlQ_+) \mid e(\beta-\gamma,\gamma)M \ne 0 \}.
\end{align*}
It is easy to see that 
$\gW(M \conv N) = \gW(M) + \gW( N)$ and $\sgW(M \conv N) = \sgW(M) + \sgW( N)$.

By \cite{TW16}, we have
\begin{align*}
\gW(M)\subset\chl(\gW(M)\cap\prt)\qt{for any simple module $M$}.
\end{align*}
Here, for $A\subset \R\tens \rtl$, $\chl(A)$ denotes the smallest
convex subset of $\R\tens \rtl$ which is stable by the multiplication
by $\R_{>0}$ and contains $A\cup\st{0}$.

An ordered pair $(M, N)$ of $R$-modules is called \emph{unmixed}  if
$$ 
\sgW(M) \cap  \gW(N) \subset \{0\}.
$$ 
Note that  if $(M,N)$ is unmixed, then
it is $\La$-definable and $\tLa(M,N)=0$.
An almost \afr triple $(M,X,N)$ of simple modules $M,X,N$
is normal if $(M,N)$ is unmixed 
(see \cite[Proposition 2.12]{KKOP18} and \cite[Corollary 2.13]{KKOP23A}).

For $w\in \weyl$, we define $\catC_{w}$ to be the  full subcategory of $R\gmod$ consisting of modules $M$ satisfying
\begin{align*} 
\gW(M) \subset \prtl\cap w \nrtl.
\end{align*}
In a similar manner, for $v\in \weyl$, we define $\catC_{*,v}$ to be the  full subcategory of $R\gmod$ consisting of modules $N$ satisfying
\begin{align*}
\sgW(N) \subset \prtl\cap v\prtl.
\end{align*}
We now set
$\Cwv $ to be the full subcategory of $R\gmod$ whose objects are contained in both of $ \catC_w$ and $\catC_{*,v}$, i.e., 
\eq
\Cwv = \Cw \cap \catC_{*, v}.
\label{eq:Cwv}
\eneq
We denote by $K(\Cw)$ and $K(\Cwv)$ the Grothendieck rings of 
$\Cw$ and $\Cwv$, respectively.  They are $\Z[q^{\pm1}]$-algebras
where the indeterminate $q$ is induced by the grading shift functor. 
The category $\Cw$ gives a categorification of the quantum unipotent coordinate ring $A_q(\n(w))$ (\cite{KL1, KL2, R08, KKKO18}), i.e., 
\begin{align*} 
K(\Cw) \simeq A_q(\n(w)), 
\end{align*} 
and $\Cwv$ categorifies the quantization of the coordinate ring of the open 
\emph{Richardson variety} $\mathcal{R}_{w,v}$ after localization (\cite{KKOP18}). 
We sometimes write  $K(\Cw)|_{q=1}$ and $K(\Cwv)|_{q=1}$ when we ignore the grading shift, i.e., $$K(\Cw)|_{q=1}\seteq K(\Cw)/(q-1)K(\Cw) \qtq K(\Cwv)|_{q=1}\seteq K(\Cwv)/(q-1)K(\Cwv).$$

\subsection{Reflection functors} \label{Sec: RF}
For each $i\in I$, we set $(R\gmod){}_i$ (resp.\ ${}_i (R\gmod)$) to be the full subcategory of $R\gmod$ consisting of modules killed by $\E_i$ 
(resp.\ $\E_i^*$).
In \cite{KKOP26}, we define
the monoidal equivalence (cf.\ \cite{Kato20})
$$\F_i\cl (R\gmod)_i\isoto{}_i(R\gmod).$$
called the \emph{reflection functor} 
 which categorifies Lusztig's braid symmetries (\cite{LusztigBook}). 
Since $\F_i$ categorifies 
the \emph{Saito crystal reflection} (\cite{Saito94}) at the crystal level, 
$\F_i(M)$ for a simple module $ M \in (R\gmod){}_i $ can be computed as follows:
\begin{align} \label{Eq: F_i in crystal}
\F_i(M) \simeq   \tF_i^{\ph^*_i(M)} {\tE_i}^{* \ep_i^*(M)}  (M).
\end{align}

The following proposition is proved (at least implicitly) in
\cite[\S\;2.3]{TW16}.

\Prop \label{prop:Refwv}
Let $i\in I$.
\bnum
\item For any $w\in\weyl$ such that $s_iw<w$, $\F_i$ induces
 an equivalence of monoidal categories
$$\F_i\cl\Cw[s_iw]\to\Cw\cap{}_i(R\gmod)$$ 

\item For any $v\in\weyl$ such that $v<s_iv$, $\F_i$ induces
 an equivalence of monoidal categories
$$\F_i\cl \Cwv[*,v]\cap(R\gmod){}_i\to\Cwv[*,s_iv].$$
\ee
\enprop
\Proof
Recall first a result of \cite[\S\;2.3]{TW16}.
For convex preorder, we refer the reader to \cite{TW16} (see also \cite[Section 1.3]{KKOP18}).

Let $\prtm\seteq\st{\al\in\prt\mid \Q_{>0}\,\al\cap\prt=\Z_{>0}\,\al\cap\prt}$ be the set of minimal positive roots.
For a convex order $\preceq$ on $\prtm$, we extend it to
the convex preorder on $\prt$ such that
$m\al\preceq n\beta$ for any
$m,n\in\Z_{>0}$ and $\al,\beta\in\prtm$ with $\al\preceq\beta$.

For $i\in I$ and a convex order $\preceq$ on $\prtm$
such that $\al_i$ is the smallest element of $\prtm$ with respect to $\preceq$,
let
$\preceq^{s_i}$ be the convex order on $\prtm$ defined by:
\eqn
&&\parbox{60ex}{
\bna
\item
$\al_i$  is the largest element of $\prtm$  with respect to  $\preceq^{s_i}$,
\item
for $\al,\beta\in\prtm\setminus\st{\al_i}$,
$\al\preceq^{s_i}\beta$ if and only if $s_i\al\preceq s_i\beta$.
\ee}
\eneqn
For $\beta\in\prt$, we denote
by $(R\gmod)^{\preceq}_\beta$ the full subcategory
of $R\gmod$ consisting of $M\in R\gmod$ such that
$\gW(M)\cap\prt\subset \st{\al\in\prt\mid \al\preceq\beta}$.
Similarly we denote
by $(R\gmod)^*_\beta{}^{\preceq}$ the full subcategory
of $R\gmod$ consisting of $M\in R\gmod$ such that
$\sgW(M)\cap\st{\al\in\prt\mid \al\preceq\beta}=\emptyset$.

Now, take a convex order $\preceq$ on $\prtm$
such that $\al_i$ is the smallest element of $\prtm$ with respect to $\preceq$.
Then by \cite[\S\;2.3]{TW16}, we have the following:

\noi
for any $\beta\in\prt\setminus\st{\al_i}$, we have
\bna
\item 
the reflection functor $\F_i$ gives an equivalence
$$(R\gmod)^{\preceq^{s_i}}_\beta
\isoto {}_iR\gmod\cap (R\gmod)^{\preceq}_{\beta},$$
\item
the reflection functor $\F_i$ gives an equivalence
$$R\gmod_i\cap(R\gmod)^*_\beta{}^{\preceq^{s_i}}
\isoto (R\gmod)^*_{\beta}{}^{\preceq}.$$
\ee

\mnoi
Now we shall prove (i) and (ii).

\noindent
(i) Take a convex order $\preceq$ on $\prtm$ such that
$\al_i$ is the smallest and $\prt\cap w\nrt\preceq \prtm\cap w\prt$,
and let $\beta$ be the largest element of $\prt\cap w\nrt$.
Then we have
$(R\gmod)^{\preceq^{s_i}}_{s_i\beta}=\Cw[s_iw]$ and 
$(R\gmod)^{\preceq}_\beta=\Cw$. Hence (a) implies (i).

\mnoi
(ii) Take a convex order $\preceq$ on $\prtm$ such that
$\al_i$ is the smallest and $\prt\cap s_iv\nrt\preceq \prtm\cap s_iv\prt$,
and let $\beta$ be the largest element of $\prt\cap s_iv\nrt$.
Then we have
$(R\gmod)^*_{s_i\beta}{}^{\preceq^{s_i}}=\Cw[{*,v}]$
and $(R\gmod)^*_{\beta}{}^{\preceq}=\Cw[{*,s_iv}].$
\QED

\Lemma\label{lem:tEmst}
Let $w\in\weyl$ and let $i\in I$. 
\bnum 
\item 
Let $M$ be a simple module in $R\gmod$.
If $\de_i(M)=0$, then $\tEsm_iM\simeq\F_i(\tEm_iM)$.
\item 
If $s_iw<w$, then $\tEm_i N \in\Cw[s_iw]$ for any simple module $N\in\Cw$.
\ee
\enlemma
\Proof 
(i) We set $\beta \seteq  \wt(M)$ and 
$$
M' \seteq  \tEm_i (M),\quad M'' \seteq  \tEsm_i (M), \qtq M_\circ \seteq  \tEm_i \tEsm_i (M).
$$
We then write $M = \ang{i}^{\circ a} \hconv M' = M'' \hconv \ang{i}^{\circ b}$ where 
$a \seteq  \ep_i(M)$ and $b \seteq  \ep^*_i(M)$. Note that $\tEsm_i M' \simeq M_\circ$ and  
$$ 
\de_i(M) = a + b + \ang{h_i, \beta} =0. 
$$ 
Since $\wt(M_\circ) = \beta+ (a + \ep^*_i(M'))\al_i = \beta+ (b + \ep_i(M''))\al_i  $, we have
$$
a + \ep^*_i(M') = b + \ep_i(M''),
$$
which implies 
$$
\ph_i^*(M') =  \ep^*_i(M') + \ang{h_i, \beta + a\al_i }
= \ep^*_i(M') + a-b = \ep_i(M'').
$$
Hence, we have 
$$
\F_i(\tEm_iM) = \F_i( M') \simeq \tF_i^{\ph^*_i(M)} (M_\circ) = \tF_i^{\ep_i(M'')} (M_\circ) = M''.
$$

(ii) Replacing $N$ with $\tF_i^n N$ ($n\gg0$), we may assume that
$\de_i(N)=0$.  
As $N\in \Cw$, we have $ L\seteq \tEsm_i(N)  \in\Cwv[w,s_i]$, which implies 
$$
\F_i^{-1}(L) \simeq \tEm_iN \in\Cw[s_iw]
$$
by (i) and Proposition \ref{prop:Refwv}.
\QED

\vskip 2em 
 
\subsection{Miscellaneous results}
In this subsection, we show several lemmas and propositions which will be used in the paper.

The following lemma can be obtained from \cite[Lemma 3.2.11 and 3.2.12]{KKKO18} by a straightforward modification to the setting of arbitrary quiver Hecke algebras.

\begin{lem} [{\cite[Lemma 3.2.11 and 3.2.12]{KKKO18}}] \label{lem: Normal sequence generalized}
Let $L,M,N \in R\gmod$ be simple modules and assume that $L$ is affreal. 
\bnum
\item If $L$ and $M$ commute, then 
$$
\La(L,M)+ \La(L,N) = \La(L,S) \quad \text{ for any simple quotient $S$ of $M \conv N$.}
$$
\item 
If $L$ and $N$ commute, then 
$$
\La(M,L)+ \La(N,L) = \La(S,L) \quad \text{ for any simple quotient $S$ of $M \conv N$.}
$$
\ee
\end{lem}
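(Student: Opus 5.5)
The plan is to reduce both statements to the standard fact for the head of a normal sequence, after first realizing an arbitrary simple quotient $S$ of $M\conv N$ as such a head. It suffices to prove (i); statement (ii) then follows by the same argument with left and right exchanged, or by applying (i) to the opposite quiver Hecke algebra obtained through the anti-involution $\psi$. Under $\psi$ the convolution order is reversed and $\La(X,Y)$ becomes $\La(Y,X)$.

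For (i), I first use that $L$ is affreal. Then for any simple $X$ the pair $(L,X)$ is $\La$-definable, and $\rmat{L,X}$ is a nonzero map. Since $L$ and $M$ commute, they strongly commute, so $L\conv M$ is simple and $\rmat{L,M}$ is an isomorphism up to grading shift.

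Next I consider the composition
\[
\rmat{L,M\conv N}=(M\conv\rmat{L,N})\circ(\rmat{L,M}\conv N)\colon L\conv M\conv N\to M\conv N\conv L .
\]
I want to show that $(L,M,N)$ is a normal sequence. This amounts to showing that $\rmat{L,M\conv N}$ is nonzero. That is immediate: $\rmat{L,M}\conv N$ is an isomorphism, and $M\conv\rmat{L,N}$ is nonzero because convolution with a nonzero module is faithful on nonzero maps.

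To handle an arbitrary simple quotient $S$ rather than only the head, I use the affinization of $L$. The renormalized R-matrix $\rmat{L,X}$ for $X=M\conv N$ is obtained from $\Rre_{L_z,X}$ by specializing at $z=0$ the leading coefficient. This construction is functorial in $X$ with respect to surjections $X\epito S$, whenever the resulting leading-order map stays nonzero. So the key estimate is
\[
\La(L,S)\le \La(L,M)+\La(L,N)
\]
for every simple subquotient $S$ of $M\conv N$. This follows from $\La(L,M\conv N)=\La(L,M)+\La(L,N)$, which holds since $\rmat{L,M}$ is invertible and the composition above is nonzero, together with the general inequality $\La(L,X')\le\La(L,X)$ for quotients $X'$ of $X$, a consequence of the order of the zero of the universal R-matrix.

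For the reverse inequality I use strong commutativity. Since $L\conv M$ is simple, the map
\[
L\conv M\conv N\simeq M\conv L\conv N\xrightarrow{\,M\conv\rmat{L,N}\,}M\conv N\conv L\epito S\conv L
\]
can be compared with $L\conv S$. I expect the main obstacle to be verifying that this composite, restricted through $L\conv M\conv N\epito L\conv S$, does not vanish. The approach I would try first is the argument of \cite[Lemma 3.2.11]{KKKO18}. One writes $L\conv M\conv N\to L\conv S$. One then uses that $\HOM(L\conv S,S\conv L)$ is one-dimensional, since $L$ is real and $S$ is simple. Finally one pulls back via the surjection $M\conv N\epito S$ and uses that the image of $\rmat{L,M\conv N}$ cannot be killed by passing to $S\conv L$. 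This uses the simplicity of $L\conv M$, because any nonzero map out of $L\conv M\conv N\simeq M\conv L\conv N$ factors through heads controlled by $M$. The two degree inequalities then give the equality $\La(L,M)+\La(L,N)=\La(L,S)$.
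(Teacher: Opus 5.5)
Your reduction is the right one, and it is the argument behind \cite[Lemma 3.2.11]{KKKO18}, which the paper cites without proof. Set $X\seteq M\conv N$ and $p\colon X\epito S$. Naturality of the universal R-matrix of an affinization of $L$ gives $\La(L,S)\le\La(L,X)=\La(L,M)+\La(L,N)$. Equality holds, with $\rmat{L,S}\circ(L\conv p)=(p\conv L)\circ\rmat{L,X}$, exactly when $(p\conv L)\circ\rmat{L,X}\neq0$. But this non-vanishing is the entire content of the lemma, and your proposal does not prove it. The phrase ``the image of $\rmat{L,M\conv N}$ cannot be killed by passing to $S\conv L$'' simply restates the claim. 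The one-dimensionality of $\HOM(L\conv S,S\conv L)$ only identifies the renormalized R-matrix of $(L,S)$ with the one defining $\La(L,S)$; it says nothing about whether a particular map into $S\conv L$ is zero. Finally, ``any nonzero map out of $M\conv L\conv N$ factors through heads controlled by $M$'' is not an argument: $M\conv N$ need not have a simple head, and $S$ is an arbitrary simple quotient.

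The missing ingredient is quasi-rigidity \cite[Lemma 3.1]{KKKO15}, the tool used in the proof of Proposition~\ref{prop: XYZ}. You need it in this form: if $K\subset M\conv N$ and $Y\subset N\conv L$ are submodules with $M\conv Y\subset K\conv L$, then there is a submodule $N'\subset N$ with $M\conv N'\subset K$ and $Y\subset N'\conv L$. Since $\rmat{L,M}$ is an isomorphism (this is exactly where the commutativity of $L$ and $M$ is used), the image of $\rmat{L,X}=(M\conv\rmat{L,N})\circ(\rmat{L,M}\conv N)$ equals $M\conv Y$ with $Y\seteq\on{Im}(\rmat{L,N})\neq0$. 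If $(p\conv L)\circ\rmat{L,X}=0$, then $M\conv Y\subset\Ker(p)\conv L$, and quasi-rigidity produces $N'\subset N$ with $Y\subset N'\conv L$. Hence $N'\neq0$, so $N'=N$ because $N$ is simple, and then $M\conv N\subset\Ker(p)$, contradicting $S\neq0$.

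Two minor points. First, normality of $(L,M,N)$ in the paper's sense also involves $\rmat{M,N}$, but you only need (and correctly prove) $\rmat{L,X}\neq0$. Second, for (ii) the duality defined via $\psi$ does not do what you claim. It is contravariant and satisfies $\La(X^*,Y^*)=\La(X,Y)$, so it turns quotients of $M\conv N$ into submodules of $N^*\conv M^*$ rather than yielding (ii). Either mirror the argument, using the other form of quasi-rigidity, or use the covariant automorphism of $R(\beta)$ that reverses the order of the strands.
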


We then have the following proposition.

\begin{prop} \label{prop: Cao general}
Let $L,M,N \in R\gmod$ be simple modules and assume that $L$ is affreal.
\bnum
\item 
There exists a simple subquotient $S$ of $M \conv N$ such that
$$
\La(L,M) + \La(L,N) = \La(L,S). 
$$
Similarly, there exists a simple subquotient $S'$ of $M \conv N$ such that
$$
\La(M,L) + \La(N,L) = \La(S',L). 
$$ 
\item  
We have
\begin{align*}
\La(L,M)+\La(L,N) &= \max \{ \La(L,S) \mid \text{$S$ is a composition factor of $M \conv N$}\}, \\
\La(M, L)+\La(N,L) &= \max \{ \La(S,L) \mid \text{$S$ is a composition factor of $M \conv N$}\}. 
\end{align*}
\ee
\end{prop}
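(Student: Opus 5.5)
Since $L$ is affreal, both $(L,X)$ and $(X,L)$ are $\La$-definable for every simple $X$, so all quantities in the statement make sense. The plan is to reduce to Lemma~\ref{lem: Normal sequence generalized} by replacing $M$ with a module that commutes with $L$. The standard device is to convolve with a large power of $L$: for $n\gg0$, consider the simple module $M_n\seteq L^{\circ n}\hconv M$, or rather we use the following upper-bound argument directly. First we prove part (ii) in the form ``$\le$'' for every composition factor, and then the existence statement (i), which gives equality in (ii).

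For the inequality, I use the R-matrix. We have $\rmat{L,M}\colon L\conv M\to M\conv L$ and $\rmat{L,N}\colon L\conv N\to N\conv L$. Their composition
\[
(M\conv\rmat{L,N})\circ(\rmat{L,M}\conv N)\colon q^{\La(L,M)+\La(L,N)}L\conv M\conv N\to M\conv N\conv L
\]
is a nonzero homomorphism of degree $\La(L,M)+\La(L,N)$, since $L$ is affreal and $(L,M,N)$ is therefore a normal-type configuration. The renormalized R-matrix for $L$ acting on $M\conv N$ (defined through an affinization $L_z$ of $L$ and the universal R-matrix $R^{\mathrm{univ}}_{L_z,M\conv N}$) equals this composition up to a constant, and its degree on each composition factor $S$ of $M\conv N$ is bounded by $\La(L,S)$. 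Concretely, filter $M\conv N$ by its composition series. The renormalized R-matrix $\mathbf{r}_{L_z,M\conv N}$ has order of vanishing at $z=0$ equal to the minimum over composition factors of the orders for $\mathbf{r}_{L_z,S}$. Translating orders into degrees via $\La(L,X)=\deg$ of the renormalized map gives
\[
\La(L,M)+\La(L,N)=\max_S\La(L,S)
\]
once we know that the factorization $R_{L_z,M\conv N}=(M\conv R_{L_z,N})(R_{L_z,M}\conv N)$ yields renormalizations compatible with this description. In particular, some composition factor $S$ attains the maximum, and this $S$ satisfies (i).

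The statement for $\La(-,L)$ follows symmetrically, using $\rmat{M,L}$ and $\rmat{N,L}$ and the affinization on the right, or by applying the duality $\psi$ / reversal of the convolution order.

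The main obstacle is the precise bookkeeping with affinizations: one has to show that the order of zero at $z=0$ of $R_{L_z,M\conv N}$ equals the sum of the orders for $M$ and $N$, using that the composition is nonzero modulo $z$, and that this order equals the minimum over composition factors. I would handle this via the description $\La(L,X)=$ degree of the leading coefficient. In a filtration $0\subset F_1\subset\cdots$ of $M\conv N$, the restriction of $\rmat{}$ to a layer has order at least that layer's order, with equality for at least one layer. This last point, a lower-semicontinuity argument for the $z$-adic valuation of a block-triangular map, is what I expect to take the most care.
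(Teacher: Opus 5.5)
\textbf{There is a genuine gap, and it is exactly at the existence statement (i).}

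The inequality half of your argument is fine. The universal R-matrix $R_{L_z,X}$ is functorial in $X$ and $\cor[z]$-linear between free $\cor[z]$-modules. So if $z^s$ divides $R_{L_z,M\conv N}$, then $z^s$ divides the induced map on every subquotient. Combined with $\operatorname{ord}_z R_{L_z,M\conv N}=\operatorname{ord}_z R_{L_z,M}+\operatorname{ord}_z R_{L_z,N}$, this gives $\La(L,S)\le\La(L,M)+\La(L,N)$ for every composition factor $S$. Note the direction: the degree on a factor is bounded \emph{below} by $\La(L,S)$. This is the ingredient the paper imports from [KKKO18, Proposition~3.2.10] for (ii).

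The reverse claim, that the $z$-adic order on $M\conv N$ equals the minimum over composition factors ``with equality for at least one layer'', is not a semicontinuity fact. The $z$-adic order of a block upper-triangular matrix is the minimum over \emph{all} its blocks, off-diagonal ones included. It can therefore be strictly smaller than the order of every diagonal block, e.g.\ $\bigl(\begin{smallmatrix} z&1\\ 0&z\end{smallmatrix}\bigr)$.

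In module terms, the leading coefficient $f=(M\conv\rmat{L,N})\circ(\rmat{L,M}\conv N)$ preserves every submodule of $M\conv N$. Nothing in your proposal prevents $f$ from inducing zero on every composition factor, i.e.\ from sending $L\conv F_q$ into $F_{q-1}\conv L$ along a composition series. Ruling this out is equivalent to (i) itself. So the step you flagged as needing care is precisely the unproved content, and ``some composition factor attains the maximum'' does not follow.

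The paper avoids valuations and uses the device you mentioned and then discarded:
\begin{enumerate}
\item Choose $n$ such that $X\seteq L^{\circ n}\hconv M$ commutes with $L$ ([KKOP21A, Corollary~3.18]). Since $L$ is real, $\La(L,X)=\La(L,M)$.
\item Lemma~\ref{lem: Normal sequence generalized}(i) then gives $\La(L,Y)=\La(L,M)+\La(L,N)$ for any simple quotient $Y$ of $X\conv N$.
\item $Y$ is also a simple quotient of $L^{\circ n}\conv M\conv N$. Take a minimal submodule $Z\subset M\conv N$ for which $L^{\circ n}\conv Z\to Y$ is nonzero, and a simple quotient $S$ of $Z$. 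By minimality the map factors through $L^{\circ n}\conv S$.
\item Hence $Y\simeq L^{\circ n}\hconv S$ and $\La(L,S)=\La(L,Y)$, which proves (i).
\end{enumerate}
Part (ii) then follows from (i) together with the inequality above.
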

\begin{proof}  
(i) We focus on proving the first identity since the second can be shown in the same manner. 

Let us take a positive integer $n$ such that $X \seteq L^{\circ n} \hconv M$ commutes with $L$ (see \cite[Corollary 3.18]{KKOP21A}). Take a simple quotient $Y$ of $X \conv N$. Then Lemma~\ref{lem: Normal sequence generalized} implies that 
$$
\La(L,M)+\La(L,N) = \La(L,X) + \La(L,N) = \La(L,Y).
$$  
Since $Y$ is a simple quotient of $L^{\circ n} \conv M \conv N$, there exists a simple subquotient 
$S$ of $M \conv N$ such that $Y \simeq L^{\circ n} \hconv S$. Indeed, take a minimal submodule $Z$
of $M \conv N$ such that the composition 
$$
L^{\circ n} \conv Z \to  L^{\circ n} \conv M \conv N \twoheadrightarrow Y
$$
does not vanish. Take a simple quotient $S$ of $Z$.  
Then
$$
L^{\circ n} \conv Z \twoheadrightarrow Y \text{ factors through } L^{\circ n} \conv S.  
$$
Hence we have 
\[
\La(L,S) = \La(L,L^{\circ n} \hconv S) =\La(L,Y) = \La(L,M)+ \La(L,N). 
\]

\snoi
(ii) It follows from (i) and \cite[Proposition 3.2.10]{KKKO18}.
\end{proof}

\begin{prop} \label{prop: XYZ}
Let $(X, Y, Z)$ be an almost affreal normal sequence of simple modules.
 \bnum
\item  We assume that  
 \bna
 \item  $X$ is affreal, 
 \item  $X$ commutes with $Y \hconv Z$, 
 \ee
 Then $X$ commutes with $Y$.
 
 \item  We assume that  
 \bna
 \item  $Z$ is affreal, 
 \item  $Z$ commutes with $X \hconv Y$, 
 \ee
 Then $Z$ commutes with $Y$.
 \ee
\end{prop}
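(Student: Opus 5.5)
The plan is to reduce commutation to the vanishing of $\de$, using the criterion recalled above: since $X$ is affreal, $X$ commutes with a simple module $M$ if and only if $X$ and $M$ strongly commute, if and only if $\de(X,M)=0$. So the goal is $\de(X,Y)=0$. I only treat (i); (ii) is the mirror statement and should follow by the same argument with left and right exchanged (replace $\La(X,-)$ by $\La(-,Z)$), or by applying the anti-involution $\psi$ / duality to reverse convolution order.

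Set $S\seteq Y\hconv Z$. By normality of $(X,Y,Z)$, the image of $\rmat{X,Y,Z}$ is $\hd(X\conv Y\conv Z)=X\hconv S$. Moreover property (a) of normal sequences gives
\[
\La(X,S)=\La(X,Y)+\La(X,Z).
\]
For the other order I apply Proposition \ref{prop: Cao general}~(ii) to $L=X$ and the composition factor $S$ of $Y\conv Z$. This gives the inequality
\[
\La(S,X)\le \La(Y,X)+\La(Z,X).
\]
Adding these two, and using the hypothesis $\de(X,S)=0$, yields
\[
0=2\de(X,S)\le 2\de(X,Y)+2\de(X,Z).
\]
This inequality points the wrong way, so it alone is not enough. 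The real content is a reverse inequality, which I aim to prove in the form
\[
\de(X,Y)+\de(X,Z)\le \de(X,S)=0.
\]
Since each $\de(X,-)\ge0$, this would force $\de(X,Y)=0$, and also $\de(X,Z)=0$ as a by-product.

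The reverse inequality is the main obstacle. My first attempt is a degree count on the composite
\[
X\conv Y\conv Z\To Y\conv Z\conv X\To X\conv Y\conv Z,
\]
built from $\rmat{X,Y}$, $\rmat{X,Z}$ followed by $\rmat{Y,X}$, $\rmat{Z,X}$. Its degree is $2\de(X,Y)+2\de(X,Z)$. Because $X\conv S\simeq S\conv X$ is simple and is the head on both sides, this composite should induce a nonzero endomorphism of $X\conv S$ of that degree; since an endomorphism of a simple module is a scalar, the degree would then have to be $0$. The delicate point is nonvanishing. For this I would pass to an affinization $X_z$ of $X$ (available since $X$ is affreal): there the composite of renormalized R-matrices is a power of $z$ times the identity, with exponent controlled by the $\de$'s, and I would compare it with the corresponding composite for the pair $(X_z,S)$, whose exponent is $\de(X,S)=0$. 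Normality of $(X,Y,Z)$ should guarantee that $\rmat{X_z,Y\conv Z}$ restricts compatibly to $\rmat{X_z,S}$ on the head, which would give the comparison.

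If the affinization argument becomes too technical, my fallback is the trick used in the proof of Proposition \ref{prop: Cao general}. Choose $n$ with $X^{\circ n}\hconv Y$ commuting with $X$ (\cite[Corollary 3.18]{KKOP21A}), then apply Lemma \ref{lem: Normal sequence generalized}~(ii) to the quotient $X^{\circ n}\hconv S$ of $(X^{\circ n}\hconv Y)\conv Z$. This should upgrade the inequality for $\La(S,X)$ above to an equality, which is exactly what is needed to conclude $\de(X,Y)=0$.
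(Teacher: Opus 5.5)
\textbf{The intermediate goal is false.} Your reduction to $\de(X,Y)=0$ is fine, and so are the relations $\La(X,S)=\La(X,Y)+\La(X,Z)$ and $\La(S,X)\le\La(Y,X)+\La(Z,X)$. However, the inequality you call the real content, $\de(X,Y)+\de(X,Z)\le\de(X,Y\hconv Z)$, fails under the hypotheses of the proposition, so neither of your routes can succeed. Here is a counterexample:
\begin{itemize}
\item Take $i,j\in I$ with $(\al_i,\al_j)=-1$, and set $X=Y=\ang{i}$, $Z=\ang{j}$.
\item The triple is normal because $(X,Z)$ is unmixed.
\item $Y\hconv Z=\ang{i,j}$ has $\de_i(\ang{i,j})=1+0-1=0$, so $X$ commutes with $Y\hconv Z$. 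All hypotheses of (i) hold, and the conclusion holds because $\ang{i}$ is real.
\item But $\de(X,Z)=\de_i(\ang{j})=1$, so $\de(X,Y)+\de(X,Z)=1>0=\de(X,Y\hconv Z)$. There is no by-product $\de(X,Z)=0$.
\end{itemize}
Numerically, $\La(S,X)=-\La(X,S)=-1$ while $\La(Y,X)+\La(Z,X)=0+1=1$. So the equality your fallback aims at is also false.

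\textbf{Where each sketch breaks.}
\begin{itemize}
\item The fallback misreads Lemma~\ref{lem: Normal sequence generalized}. Part (ii) requires $X$ to commute with the right factor $Z$, which is exactly what fails above. Making $X^{\circ n}\hconv Y$ commute with $X$ only activates part (i), i.e.\ additivity of $\La(X,-)$, which normality already gives you.
\item Your composite $X\conv Y\conv Z\to Y\conv Z\conv X\to X\conv Y\conv Z$ contains the factor $Y\conv(\rmat{Z,X}\circ\rmat{X,Z})$. Since $X\conv Z$ has simple head and socle, $\rmat{Z,X}\circ\rmat{X,Z}=0$ as soon as $X$ and $Z$ do not commute, so the nonvanishing you need fails.
\item Normality controls only the $X\conv(-)$ side. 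On the $(-)\conv X$ side there is an extra positive power of $z$ that you cannot remove.
\end{itemize}

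\textbf{What is actually needed.} You need an argument that separates $Y$ from $Z$ with no information on the pair $(X,Z)$. In $\La$-terms this is $\La(Y,X)\le\La(Y\hconv Z,X)+\La(X,Z)$; combined with your identities it gives $\de(X,Y)\le0$ and leaves $\de(X,Z)$ free. The paper gets the conclusion from quasi-rigidity of $R\gmod$:
\begin{itemize}
\item By normality, the map $X\conv Y\conv Z\to Y\conv Z\conv X\twoheadrightarrow (Y\hconv Z)\conv X$ (via $\rmat{X,Y}$, then $\rmat{X,Z}$) equals the projection $X\conv Y\conv Z\twoheadrightarrow X\conv(Y\hconv Z)$ followed by the isomorphism $X\conv(Y\hconv Z)\to(Y\hconv Z)\conv X$.
\item Hence $\Ker(\rmat{X,Y})\conv Z\subset X\conv\Ker(Y\conv Z\twoheadrightarrow Y\hconv Z)$.
\item By \cite[Lemma 3.1]{KKKO15} there is a submodule $Y'\subset Y$ with $\Ker(\rmat{X,Y})\subset X\conv Y'$ and $Y'\conv Z$ contained in that kernel.
\item Since $Y\hconv Z\neq0$, we get $Y'\neq Y$, so $Y'=0$ by simplicity of $Y$. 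Thus $\rmat{X,Y}$ is injective, hence an isomorphism.
\end{itemize}
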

\begin{proof}
Since (ii) can be proved in the same manner, we only prove (i). 
We may assume that $Y$ and $Z$ do not commute since the assertion is obvious otherwise. 
From the following commutative diagram
$$
\xymatrix{
X \conv Y \conv Z   \ar@{->>}[d] \ar[r]^{\rmat{X,Y}} &  Y \conv X \conv Z  \ar[r]^{\rmat{X,Z}} & Y \conv Z \conv X  \ar@{->>}[d]  \\
 X \conv (Y \hconv Z) \ar[rr]^{\sim} &&  (Y \hconv Z) \conv X,
}
$$	
we have 
$$
\Ker ( X \conv Y \buildrel {\rmat{X,Y}} \over \longrightarrow Y \conv X ) \conv Z \subset  X \conv \Ker ( Y \conv Z \twoheadrightarrow Y \hconv Z ).
$$
Thus the quasi-rigidity (see \cite[Lemma 3.1]{KKKO15} and see also \cite[Section 6.3]{KKOP24D}) implies that there exists $S \subset Y$  such that
\begin{align*}
\Ker ( X \conv Y \buildrel {\rmat{X,Y}} \over \longrightarrow Y \conv X )   \subset X \conv S, \qquad  S \conv Z \subset     \Ker ( Y \conv Z \twoheadrightarrow Y \hconv Z ).
\end{align*}
Since $ \Ker ( Y \conv Z \twoheadrightarrow Y \hconv Z ) \ne  Y\conv Z$, we have $S \ne Y$, which implies that $S=0$.
Hence the $R$-matrix $X \conv Y \buildrel {\rmat{X,Y}} \over \longrightarrow Y \conv X $ is injective, which means that it is an isomorphism (see \cite[Section 3]{KKKO15}). 
\end{proof}

\begin{coro} \label{cor: MNL12}
Let $M$ be an affreal simple module and $N$ a simple module. Let $ \{ L_k \}_{k=1,2} $ be a commuting family of affreal simple modules.	
\bnum
\item  If $M$ commutes with $N$ and $\hd( N  \conv L_1 \conv L_2 )$, then $M$ commutes with $N \hconv L_1$.
\item If $M$ commutes with $N$ and  $\hd(L_2\conv L_1 \conv N)$, then $M$ commutes with $L_1 \hconv N$.
\ee
\end{coro}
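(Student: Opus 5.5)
Both parts will be obtained from Proposition~\ref{prop: XYZ} by choosing the triple $(X,Y,Z)$ appropriately. We treat (i); part (ii) is the mirror image and uses Proposition~\ref{prop: XYZ}(ii) in the same way.

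For (i), set $L\seteq L_1\conv L_2$. Since $\{L_1,L_2\}$ is a commuting family of affreal simple modules, $L_1$ and $L_2$ strongly commute. Hence $L$ is simple and real, and it is affreal because a convolution of strongly commuting affreal modules is affreal (its affinization is the product of the affinizations). Then $\hd(N\conv L_1\conv L_2)=N\hconv L$. We now apply Proposition~\ref{prop: XYZ}(i) to the triple $(X,Y,Z)=(M,\,N\hconv L_1,\,L_2)$. We need four facts.
\bna
\item $X=M$ is affreal, by hypothesis.
\item The triple is almost affreal, since $M$ and $L_2$ are both affreal.
\item $(N\hconv L_1)\hconv L_2\simeq \hd(N\conv L_1\conv L_2)$, which commutes with $M$ by hypothesis.
\item The triple $(M,\,N\hconv L_1,\,L_2)$ is normal.
\ee
Proposition~\ref{prop: XYZ}(i) then gives that $M$ commutes with $N\hconv L_1$.

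The main obstacle is the normality in (d). We would check it with the additivity criterion: for an almost affreal triple with $X$ and $Z$ affreal, normality is equivalent to
\[
\La(M,Y\hconv L_2)=\La(M,Y)+\La(M,L_2).
\]
We expect that in the KKKO/KK19 framework one can show the pair of conditions directly: $(N,L_1,L_2)$ is normal because $L_1$ and $L_2$ commute, and $M$ commutes with $N$. The route we would take for the additivity is Lemma~\ref{lem: Normal sequence generalized}(i) applied with the affreal module $M$ in the role of $L$, twice.
\bnum
\item Since $M$ commutes with $N$ and $N\hconv L_1$ is a simple quotient of $N\conv L_1$, we get $\La(M,N\hconv L_1)=\La(M,N)+\La(M,L_1)$.
\item Since $M$ commutes with $N$ and $\hd(N\conv L)$ is a simple quotient of $N\conv L$, we get $\La(M,\hd(N\conv L))=\La(M,N)+\La(M,L)$.
\ee
Normality of $(M,L_1,L_2)$, which holds because $L_1$ and $L_2$ commute, gives $\La(M,L)=\La(M,L_1)+\La(M,L_2)$. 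Combining these, $\La(M,(N\hconv L_1)\hconv L_2)=\La(M,N\hconv L_1)+\La(M,L_2)$. This is the additivity needed, so $(M,\,N\hconv L_1,\,L_2)$ is normal and (d) holds.

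We must also confirm that $(N\hconv L_1)\hconv L_2\simeq \hd(N\conv L_1\conv L_2)$, which is the identification used in (c). This holds because $(N,L_1,L_2)$ is a normal sequence: its head is simple and is obtained by taking heads successively. Normality of $(N,L_1,L_2)$ follows from the commutation of $L_1$ and $L_2$ by the standard criterion in \cite{KK19}.
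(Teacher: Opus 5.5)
Your proof is correct and is essentially the paper's: both apply Proposition~\ref{prop: XYZ}(i) to the triple $(M,\,N\hconv L_1,\,L_2)$. The only difference is how normality of that triple is obtained: the paper gets it from the normality of $(M,N,L_1\conv L_2)$ and $(M,N,L_1,L_2)$ by regrouping, whereas you check the $\La$-additivity criterion directly using Lemma~\ref{lem: Normal sequence generalized}(i). Also, the affrealness of $L_1\conv L_2$ that you assert in passing is never used, since your argument only needs $L_1\conv L_2$ and $\hd(N\conv L_1\conv L_2)$ to be simple.
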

\begin{proof}
We shall prove only (i) because (ii) can be shown in the same manner. Since $M$ and $N$ commute, $(M, N, L_1 \conv L_2)$ is a normal sequence. Hence $(M, N, L_1, L_2)$ is also a normal sequence, and so is $(M, N \hconv L_1, L_2)$.
Therefore, Proposition \ref{prop: XYZ} implies the desired result.
\end{proof}

Applying Corollary \ref{cor: MNL12} to the setting $L_1 \seteq  \ang{i^{k}}$ and $L_2 \seteq  \ang{i^{n-k}}$, we have the following.

\begin{coro} 
Let $i\in I$, let $M$ be an affreal simple module and $N$ a simple module. 
Suppose that $M$ and $N$ commute.
\bnum
\item 
If $n \in \Z_{\ge0}$ and $M$ commutes with $\tF_i^n (N)$, then $M$ commutes with $\tF_i^k (N)$ for any $ k \in [1,n]$.

\item 
If $n \in \Z_{\ge0}$ and $M$ commutes with $\tF_i^{* n} (N)$, then $M$ commutes with $\tF_i^{*k} (N)$ for any $ k \in [1,n]$.
\ee	
\end{coro}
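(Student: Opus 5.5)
The statement is the specialization of Corollary~\ref{cor: MNL12} indicated just before it: take $L_1\seteq\ang{i^{k}}$ and $L_2\seteq\ang{i^{n-k}}$. I only treat (i); (ii) is the mirror image, using Corollary~\ref{cor: MNL12}\,(ii) with $L_2\conv L_1\conv N$ and the identity $\tF_i^{*m}(N)\simeq N\hconv\ang{i^m}$.

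First I need to put $\tF_i^n(N)$ into the form required by Corollary~\ref{cor: MNL12}. For a simple $N$ and $m\ge0$ one has $\tF_i^m(N)\simeq\ang{i^m}\hconv N$ up to grading shift. This holds because $\ang{i}^{\circ m}\conv N$ surjects onto $\tF_i^m(N)$ by iterating heads, and $\ang{i^m}$ differs from $\ang{i}^{\circ m}$ only by a grading shift. Also $\ang{i^m}$ is affreal: it is real since $\ang{i^m}\conv\ang{i^m}\simeq\ang{i^{2m}}$ up to shift is simple, and it has an affinization (in the symmetric case by \cite{KKKO18}, and in general as a module over $R(m\al_i)$, a nil-Hecke algebra). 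The family $\{\ang{i^k},\ang{i^{n-k}}\}$ commutes because the convolution of the two is $\ang{i^n}$ up to shift, which is simple, and the same holds in the opposite order.

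Next, fix $k\in[1,n]$. The statement as written uses right convolution, $N\hconv L_1$. The left-handed analogue is Corollary~\ref{cor: MNL12}\,(ii): if $M$ commutes with $N$ and with $\hd(L_2\conv L_1\conv N)$, then $M$ commutes with $L_1\hconv N$. Take $L_1=\ang{i^k}$ and $L_2=\ang{i^{n-k}}$. Then $L_2\conv L_1\simeq\ang{i^n}$ up to shift, so
\[
\hd(L_2\conv L_1\conv N)\simeq\ang{i^n}\hconv N\simeq\tF_i^n(N),
\]
which commutes with $M$ by hypothesis. Hence $M$ commutes with $\ang{i^k}\hconv N\simeq\tF_i^k(N)$. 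This proves (i).

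For (ii) I apply Corollary~\ref{cor: MNL12}\,(i) to $N\conv L_1\conv L_2$ in the same way, using $\tF_i^{*m}(N)\simeq N\hconv\ang{i^m}$.

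The only point needing care is the identification $\hd(\ang{i}^{\circ m}\conv N)\simeq\tF_i^m(N)$, that is, that iterated heads coincide with the head of the whole product. This follows because $(\ang{i},\dots,\ang{i},N)$ is a normal sequence, and for normal sequences the head of the product is simple and equals the iterated head. Normality holds since $\ang{i}$ is affreal and commutes with itself, so the composite R-matrix does not vanish.
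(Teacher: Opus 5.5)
Your proposal is correct and is the paper's own argument: it specializes Corollary~\ref{cor: MNL12} to $L_1=\ang{i^k}$ and $L_2=\ang{i^{n-k}}$, using part (ii) of that corollary for $\tF_i^k(N)\simeq\ang{i^k}\hconv N$ and part (i) for $\tF_i^{*k}(N)\simeq N\hconv\ang{i^k}$. One slip: your opening paragraph pairs part (ii) of the corollary with statement (ii), which is the wrong pairing, though your final paragraph gets it right.
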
	

The following lemma is an analogue of \cite[Lemma 2.23]{KKOP24A} 
for quiver Hecke algebras. 
We omit the proof since it follows from the same argument
 as in \cite[Lemma 2.23]{KKOP24A} once the existence of affinizations 
is established.
Note that the commutativity of \textcircled{c} in the diagram of 
\cite[Proof of Lemma 2.23]{KKOP24A} follows from the affinization of $M\hconv N$.

\begin{lem} [{cf.\  \cite[Lemma 2.23]{KKOP24A}}] \label{Lem: MhconvN real}
Let $X$ and $Y$ be affreal simple modules such that $X\htens Y$ admits an affinization.
If $X\htens Y$ commutes with $X$, then $X\htens Y$ is an affreal simple module.
\end{lem}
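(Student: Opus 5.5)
The plan is to follow the argument of \cite[Lemma 2.23]{KKOP24A}, which is stated for quantum affine algebras. Its ingredients are the R-matrix formalism, normal sequences, and the criterion ``$Z$ is real if and only if $\La(Z,Z)=0$'' for simple $Z$ admitting an affinization. All of these are available here once affinizations exist. Set $Z\seteq X\htens Y$. Since $Z$ admits an affinization by hypothesis, only the realness of $Z$ has to be shown, i.e.\ that $Z\conv Z$ is simple. I would use the criterion that a simple module $Z$ with an affinization is real if and only if $\de(Z,Z)=0$, or equivalently if the renormalized R-matrix $\rmat{Z,Z}$ is an isomorphism up to a scalar multiple of the identity.

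First I would compute $\La(Z,Z)$ via normal sequences. Because $Z$ commutes with $X$ and $X$ is affreal, $(X,Y)$ is normal with head $Z$. The triple $(Z,X,Y)$ should then be normal, so that by property~(a) of normal sequences
\[
\La(Z,Z)=\La(Z,X\hconv Y)=\La(Z,X)+\La(Z,Y).
\]
Normality of $(Z,X,Y)$ should follow from the commutation of $Z$ with $X$, in the same way as the first step of the proof of Corollary~\ref{cor: MNL12}. In the same way I would compute
\[
\La(Z,Z)=\La(X,Z)+\La(Y,Z)
\]
from normality of $(X,Y,Z)$, using property~(b). Here one needs that $Z$ is affreal or that the triple is almost affreal. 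Since $X$ and $Y$ are affreal, the triple is almost affreal regardless of $Z$, so this is fine.

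Next I would combine these. Write $\La(Z,X)=-\La(X,Z)$, which holds because $Z$ and $X$ strongly commute, so that $\de(X,Z)=0$. Then the identity $2\de(Z,Z)=2\La(Z,Z)$ should reduce to $\de(Y,Z)\ge\de(Z,Z)$, together with a lower bound coming from Lemma~\ref{lem: Normal sequence generalized}. It may be cleaner to argue directly, as in KKOP24A, through a commutative diagram. This step, showing that $\rmat{Z,Z}$ restricted to $Z\conv Z$ is a scalar, is the main obstacle. In \cite{KKOP24A} one considers
\[
X\conv Y\conv X\conv Y \longrightarrow Z\conv Z
\]
and compares $\rmat{Z,Z}$ with the composite of $\rmat{X,X}$, $\rmat{X,Y}$, $\rmat{Y,X}$ and $\rmat{Y,Y}$. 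The squares labelled \textcircled{a} and \textcircled{b} commute by functoriality of R-matrices between affreal modules. The square \textcircled{c} involves the R-matrix of $Z$ with itself, and its commutativity requires defining $\rmat{Z,Z}$ through a spectral parameter. That is exactly where the affinization of $X\hconv Y$ is used: the renormalized R-matrix $\Rre_{Z_z,Z}$ is specialized at $z=0$ and compared with the specialization of the composite of renormalized R-matrices for the affinizations of $X$ and $Y$.

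Once the diagram commutes, the argument finishes as follows. $X$ and $Y$ are real, so $\rmat{X,X}$ and $\rmat{Y,Y}$ are identities, and $\rmat{X,Z}$ is an isomorphism because $X$ and $Z$ commute. This shows that $\rmat{Z,Z}$ is the identity on $Z\conv Z$ up to a constant. Hence $Z\conv Z$ has simple head equal to its socle, and by \cite[Section 3]{KKKO15} it is simple. So $Z$ is real, and being affinizable, it is affreal.
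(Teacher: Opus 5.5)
Your overall plan matches the paper's: follow \cite[Lemma~2.23]{KKOP24A}, and use the affinization of $Z\seteq X\hconv Y$ only to make the square involving $\rmat{Z,Z}$ commute. The paper omits the proof and refers to exactly that argument. However, the decisive step does not go through as you describe it.

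First, your diagram compares $\rmat{Z,Z}$ with the composite of $\rmat{X,X}$, $\rmat{Y,X}$, $\rmat{Y,Y}$, $\rmat{X,Y}$ on $X\conv Y\conv X\conv Y$. Since $\rmat{X,X}$ and $\rmat{Y,Y}$ are scalars, this composite is $X\conv(\rmat{X,Y}\circ\rmat{Y,X})\conv Y$. It vanishes whenever $\de(X,Y)>0$, because the kernel of $\rmat{X,Y}$ then contains the simple socle $\mathrm{Im}\,\rmat{Y,X}$ of $X\conv Y$. A commutative square with this top row would therefore force $\rmat{Z,Z}=0$. The correct top arrow must be a renormalized R-matrix of $X\conv Y$, which is not this composite, and then knowing that $\rmat{X,X}$ and $\rmat{Y,Y}$ are scalars tells you nothing. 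Note also that the hypothesis that $Z$ commutes with $X$ never enters this diagram; the $\rmat{X,Z}$ you invoke at the end is not part of it.

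Second, even granting $\rmat{Z,Z}\in\cor^\times\id$, the inference ``hence $Z\conv Z$ has simple head equal to its socle, hence simple by \cite{KKKO15}'' is circular. The simple head/socle and image-of-R-matrix results there require one of the two factors to be real, which is what you are proving. Moreover, an automorphism of $Z\conv Z$ says nothing about its length.

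Your $\La$-route does not close either. Grant normality of $(Z,X,Y)$ and of $(X,Y,Z)$; the latter is not justified, since the commuting pair $X,Z$ is not adjacent. Adding the two identities still only gives $\de(Z,Z)=\La(Z,Z)=\de(Y,Z)$, and nothing shows $\de(Y,Z)=0$.

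What is missing is an independent reason for $Z\conv Z$ to have a simple head and a simple socle which coincide and occur once. This is exactly where the commutation with $X$ should be used. From $X\conv Z\simeq Z\conv X$ and $Z\hookrightarrow Y\conv X$ (as $X$ is affreal) you get
\[
X^{\circ2}\conv Y^{\circ2}\twoheadrightarrow X\conv Z\conv Y\simeq Z\conv X\conv Y\twoheadrightarrow Z\conv Z
\qtq
Z\conv Z\hookrightarrow Y\conv X\conv Z\simeq Y\conv Z\conv X\hookrightarrow Y^{\circ2}\conv X^{\circ2}.
\]
Since $X^{\circ2}$ is a real simple module (affreal, e.g.\ automatically when $R$ is symmetric), $X^{\circ2}\conv Y^{\circ2}$ has a simple head $H$ occurring with multiplicity one, and $Y^{\circ2}\conv X^{\circ2}$ has simple socle $\simeq H$. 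Hence $Z\conv Z$ has head and socle both $\simeq H$ with $[Z\conv Z:H]=1$, which forces $Z\conv Z\simeq H$. This proves realness without any R-matrix of $Z$. The affinization hypothesis on $Z$ is then needed only for the ``aff'' in affreal.
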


\subsection{Laurent families} \label{Sec: prep}
In this section, we recall the notion of Laurent families introduced in 
\cite{KKOP24B} and study their properties that we need in this paper. 
\subsubsection{Definition of Laurent families}
Let $\calC$ be a full subcategory of $R\gmod$ such that $\calC$ has $\one$ and is stable under taking convolution products, subquotients, extensions and grading shifts, and  let $K$ be an index set.  A family $\calM = \{ M_k \}_{k\in K} \subset \calC$ of \afr simple modules is called a \emph{commuting family} if $M_k$ is an \afr simple module for any $k\in K$ and 
$$
M_i \conv M_j \simeq M_j \conv M_i \qquad \text{ (up to a grading shift) for any $i,j\in K$.}
$$
For $\bfa = (a_k)_{k\in K} \in \Z_{\ge0}^{\oplus K}$, the module $\calM (\bfa)$ is defined as follows:
\begin{equation} \label{Eq:M(a)}
\begin{aligned}
&\text{$\calM (\bfa)$ is self-dual,}\\
&\text{$\calM (\bfa) \simeq \conv[{k\in K}]  \left( M_k^{\circ a_k} \right)$ up to a grading shift.}
\end{aligned}
\end{equation}
Then we have
\eq
\calM(\bfa)\conv\calM(\bfb)\simeq q^{\la(\bfa,\bfb)}\calM(\bfa+\bfb),
\label{eq:coM}
\eneq
where $\la(\bfa,\bfb)=-\sum_{j,k\in K}a_jb_k\tLa(M_j,M_k)$
with $\bfb = (b_k)_{k\in K}$.

\Def[{\cite{KKOP24B}}] 
Let $\calM = \{ M_k \}_{k\in K}$ be a commuting family of
\afr simple modules.
\bnum
\item
A simple module $X$ is called an {\em $\calM$-monomial} if $X$ is isomorphic to $\calM(\bfa)$ for some $\bfa\in \Z_{\ge0}^{\oplus K}$.
\item \label{Eq:qLf (a)}
$\calM$ is \emph{independent} if $\bfa = \bfb$ whenever $ \calM (\bfa) \simeq \calM (\bfb)$.
\item
$\calM $ is a \emph{quasi-Laurent family} in $\calC$ if it is independent and satisfies the following condition:
\eq \label{Eq:qLf (b)}
&&\hs{5ex}\parbox{\textwidth-10ex}%
{if a simple module $X \in \calC$ commutes with $M_k$ for any $k\in K$, then there exist $\bfa, \bfb \in \Z_{\ge0}^{\oplus K}$ such that $X \conv \calM(\bfa) \simeq \calM(\bfb)$.  }
\eneq

\item
$\calM$ is  a \emph{Laurent family}
if it is independent and satisfies the following condition \eqref{Eq:qLf (c)} instead of \eqref{Eq:qLf (b)}:
\eq
\label{Eq:qLf (c)}
&&\hs{5ex}\parbox{\textwidth-10ex}%
{any simple module $X \in \calC$ which commutes with $M_k$ for any $k\in K$
is an $\calM$-monomial.
}\eneq
\ee
\edf

\subsubsection{$g$-vectors}
Let $\calM=\st{M_k\mid k\in K}$ be a quasi-Laurent family 
in the category $\shc$.

Let $\La_{\calM} \seteq  (\La(M_i, M_j))_{i,j\in K}$,
i.e., $ q^{\La(M_i, M_j)}[M_i][M_j] = [M_j][M_i]$, 
and let $\T_\calM$ be the quantum torus generated by $\st{[M_k]}_{k\in K}$. 

We extend the definition of $[\calM(\bfa)]$ 
for any $\bfa\in\Z^{\oplus K}$ so that
\eqref{eq:coM} holds for any $\bfa,\bfb\in\Z^{\oplus K}$. 
For any simple module $X \in \cC$, there exist
some $\bfa, \bfb, \bfc, \bfd \in \Z_{\ge 0}^{\oplus K}$
such that
$$ 
\calM(\bfa) \hconv X  \simeq \calM(\bfb) \qtq 
X \hconv \calM(\bfc)   \simeq \calM(\bfd).
$$
We define 
\eq
\gL_\calM (X) \seteq  \bfb -\bfa \qtq  \gR_\calM (X) \seteq  \bfd -\bfc.
\eneq
By \cite[Lemma 3.9]{KKOP24B}, $\gL_\calM$ and $\gR_\calM$ are well-defined. 

\medskip
By \cite[Corollary 3.4]{KKOP24B}, for any module $X \in \shc$,  there exist $\bfa,  \bfb_1, \ldots,  \bfb_t  \in \Z_{\ge 0}^{\oplus K}$ such that 
\begin{align} \label{Eq: X Ma}
[X \conv \calM(\bfa)] = \sum_{s=1}^t  q^{  c_s } [\calM(\bfb_s)] \qquad \text{ for some $c_s\in \Z$.}
\end{align}
Then the following lemma is straightforward.
\Lemma
The correspondence $[X]\mapsto \sum_{s=1}^t q^{c_s } 
[\calM(\bfb_s)][\calM(\bfa)]^{-1}\in \T_\calM$
gives a well-defined injective $\Zq$-algebra homomorphism
\begin{align} \label{Eq: qlp}
\vphi_\calM\cl K(\shc)\rightarrowtail \T_\calM.
\end{align}
\enlemma

\Th\label{th:cao}
Let $X, L \in \cC$ be simple modules such that $L$ is \afr,  and write 
$$
\vphi_\calM([X])=\sum_k q^{a_k} [\calM(\bfa_k)] \qquad  \text{ for some $a_k \in \Z$ and $\bfa_k \in \Z^{\oplus K}$.} 
$$
Then we have
\begin{align*}
\La(L,X) &=\max_{k} \{ \La(L, \calM(\bfa_k) ) \} = \max_k \{ \gR_\calM(L) \cdot \La_\calM \cdot \bfa_k^T  \}, \\ 
\La(X,L) & =\max_{k} \{ \La( \calM(\bfa_k), L)\} = \max_k \{  \bfa_k \cdot \La_\calM \cdot  (\gL_\calM(L))^T  \},
\end{align*}
where $v^T$ is the transpose of a vector $v$.
\enth
\begin{proof}
Replacing $X$ with $X  \conv \shm(\bfc)$ for $\bfc\gg0$,
we may assume that $ \bfa_k  \in\Z_{\ge0}^{\oplus K}$.
Applying Proposition \ref{prop: Cao general} to \eqref{Eq: X Ma}, we have 
\begin{align} \label{Eq: LLX LXL}
\La(L,X) =\max_{k} \{ \La(L,  \calM(\bfa_k) ) \} \qtq \La(X,L)  =\max_{k} \{ \La( \calM(\bfa_k),  L)\}.
\end{align}
We write 
$
\calM(\bfb) \hconv L  \simeq \calM(\bfc)
$ for some $\bfb = (b_i)_{i\in K}, \bfc = (c_i)_{i\in K} \in \Z_{\ge0}^{\oplus K} $.
Note that $\gL_\calM(L) = \bfc - \bfb$.
For any $\bfa = (a_i)_{i\in K} \in \Z^{\oplus K}$, we have 
\begin{align*}
 \La( \calM(\bfa), \calM(\bfc) ) &= \sum_{i,j\in K} a_i \La( M_i, M_j ) c_j,\\
\La( \calM(\bfa), \calM(\bfb) \hconv L) &= \La( \calM(\bfa), \calM(\bfb) ) + \La( \calM(\bfa), L) \\
&= 
  \sum_{i,j\in K} a_i \La( M_i, M_j ) b_j + \La( \calM(\bfa), L),
\end{align*}
which implies 
$$
\La( \calM(\bfa), L) = \sum_{i,j\in K} a_i \La( M_i, M_j ) (c_j - b_j) = \bfa \cdot \La_\calM \cdot (\gL_\calM(L))^T.  
$$
Hence, we have $\La(X,L) = \max_k \{  \bfa_k \cdot \La_\calM \cdot  (\gL_\calM(L))^T  \}$ by \eqref{Eq: LLX LXL}. 

The case for $\gR_\calM(L)$ can be proved in the same manner.
\end{proof}

\begin{remark} \label{Rmk: Cao}
Suppose that $\calM$ is a monoidal seed of $\cC$ (see \S\;\ref{subsec:monoidal}). In this case, we have the following.
\bnum
\item $\gL_\calM(X)$ and $\gR_\calM(X)$ are the \emph{extended g-vectors} of a simple module $X$
with respect to $\calM$ introduced in  \cite{KK19,KKOP24B}.
\item Let $\tB$ be the exchange matrix associated with $\calM$.
The formula in Theorem \ref{th:cao} can be written as follows:
\begin{equation} \label{Eq: Cao formula}
\begin{aligned} 
\La(X,L) & = \max_k \{  \bfa_k \cdot \La_\calM \cdot  (\gL_\calM(L))^T  \} \\
&= \gL_\calM(X) \cdot \La_\calM \cdot  (\gL_\calM(L))^T + \max_k \{  (\bfa_k - \gL_\calM(X)) \cdot \La_\calM \cdot  (\gL_\calM(L))^T  \}
\end{aligned}
\end{equation}
Since $\bfa_k$ is larger than $\gL_\calM(X)$ in the \emph{dominance order}, we can write 
$\bfa_k - \gL_\calM(X) = (\tB v)^T$ for some column vector $v$ with non-negative entries (\cite[Lemma 3.6]{KK19}).
Since the pair $(-\La_\calM, \tB)$ is compatible with $d=2$, the formula \eqref{Eq: Cao formula} can be rewritten in terms of  the $F$-polynomial, which coincides with Cao's formula expressed in terms of the \emph{tropical invariant} introduced in \cite[Theorem 5.16]{Cao23}. 
Therefore, Theorem \ref{th:cao} may be regarded as a generalization of
 Cao's formula \cite[Theorem 5.16]{Cao23}  for  
cluster variable modules to the case of real simple modules. 
\ee
\end{remark}

\subsubsection{Stability of Laurent families}

We shall show that the notion of (quasi)-Laurent families is stable under a mutation-like procedure.

\Lemma
Let $\calM=\{ M_k \}_{k\in K}$ and $\calM'=\{ M'_k \}_{k\in K}$
be  commuting families of \afr simple modules.
Let $s\in K$.
Assume that
\bna
\item
$M_k\simeq M'_k$ for $k\in K\setminus\st{s}$,
\item there exist $\shm$-monomials $U$ and $V$ and an exact sequence
$$0\To U\To M_s\conv M'_s\To V\To0.$$
\ee
If $\shm$ is independent, then $\shm'$ is also independent.
\enlemma
\Proof

Let $\bfa=(a_k)_{k\in K},\;\bfb=(b_k)_{k\in K}\in\Z_{\ge0}^{\oplus K}$.
Assuming that
$\shm'(\bfa)=\shm'(\bfb)$, let us show that $\bfa=\bfb$.
We may assume that
$a_s\le b_s$.
Let $\st{\bfe_k\mid k\in K}$ be the natural basis of $\Z_{\ge0}^{\oplus K}$.
Then we have 
\eqn
[\shm'(\bfa)]\cdot [M_s]^{b_s}&&=
[\shm'(\bfa-a_s\bfe_s)]\cdot([U]+[V])^{a_s}\cdot[M_s]^{b_s-a_s}
\qtq{}\\
{}[\shm'(\bfb)]\cdot[M_s]^{b_s}&&=
[\shm'(\bfb-b_s\bfe_s)]\cdot([U]+[V])^{b_s}.
\eneqn
Hence we have
$$[\shm(\bfa+(b_s-2a_s)\bfe_s)]\cdot([U]+[V])^{a_s}
=[\shm(\bfb-b_s\bfe_s)]\cdot([U]+[V])^{b_s}.$$
Then $\bfa=\bfb$ follows from the independence of $\shm$.
\QED

\Prop
Let $\calM=\{ M_k \}_{k\in K}$ and $\calM'=\{ M'_k \}_{k\in K}$
be commuting families of \afr simple modules in $\shc$.
Let $s\in K$.
Assume that
\bna
\item
$M_k\simeq M'_k$ for $k\in K\setminus\st{s}$,
\item there exist $\st{M_k\mid k\in K\setminus\st{s}}$-monomials $U$ and $V$ 
and an exact sequence
$$0\To U\To M_s\conv M'_s\To V\To0.$$
\ee
Then,  $\shm'$ is a quasi-Laurent family in $\shc$
if $\shm$ is a quasi-Laurent family in $\shc$.
\enprop
\Proof
By the preceding lemma, $\shm'$ is independent.

\smallskip
Let $X$ be a simple module in $\shc$ which commutes with all $M'_k$.
Let us show that there exists $\bfa\in \Z_{\ge0}^{\oplus K}$
such that $X\conv\shm'(\bfa)$ is an $\shm'$-monomial.
By \cite[Corollary 3.18]{KKOP21A}, there exists $n\in\Z_{\ge0}$ such that
any simple subquotient of $X\conv M_s^n$ commutes with $M_s$.
Since $M_k$ ($k\in K\setminus\st{s}$) commutes with $X$ and $M_s$,
$M_k$ commutes with any simple subquotient of $X\conv M_s^n$.
Hence
any simple subquotient of $X\conv M_s^n$ commutes with all members of $\shm$.
Since $\shm$ is a quasi-Laurent family, there exists $\bfa\in \Z_{\ge0}^{\oplus K}$ such that
$\shm(\bfa)\conv S$ is an $\shm$-monomial for
any simple subquotient $S$ of $X\conv M_s^n$.
Hence there exist  finitely  many $\bfb(t)\in \Z_{\ge0}^{\oplus K}$ 
such that
$$[X]\cdot[M_s]^n\cdot[\shm(\bfa)]=\sum_t[\shm(\bfb(t))].$$
Thus replacing $\bfa$ with $\bfa+n\bfe_s$, we have
$$[X]\cdot[\shm(\bfa)]=\sum_t[\shm(\bfb(t))]\quad \text{
for some $\bfa\in \Z_{\ge0}^{\oplus K}$ and finitely  many $\bfb(t)\in \Z_{\ge0}^{\oplus K}$ .}$$
Let us take $m\in\Z$ such that
$m\ge a_s$ and $m\ge \bfb(t)_s$ for any $t$, where $\bfb(t)_s$ is the $s$-th component of $\bfb(t)$.
Set $\bar\bfa=\bfa-a_s\bfe_s$ and $\bar\bfb(t)=\bfb(t)-\bfb(t)_s\bfe_s$.
Multiplying $[M'_s]^m$, we obtain
\eqn
[X]\cdot[\shm(\bar\bfa)]\cdot([U]+[V])^{a_s} \cdot [M'_s]^{m-a_s}
=\sum_{t}[\shm(\bar\bfb(t))]\cdot(U+V)^{\bfb(t)_s}\cdot[M'_s]^{m-\bfb(t)_s}
\eneqn
Note that the right-hand side is a sum of $\shm'$-monomials.
Hence a component $[X]\cdot[\shm(\bar\bfa)]\cdot[U]^{a_s}\cdot[M'_s]^{m-a_s}$
of the left-hand side
is an $\shm'$-monomial.
\QED

\Prop\label{prop:Laurentm}
Let $\calM=\{ M_k \}_{k\in K}$ and $\calM'=\{ M'_k \}_{k\in K}$
be commuting families of \afr simple modules in $\shc$.
Let $s\in K$.
Assume that
\bna
\item
$M_k\simeq M'_k$ for ant $k\in K\setminus\st{s}$,
\item there exist $\st{M_k\mid k\in K\setminus\st{s}}$-monomials $U$ and $V$ 
and an exact sequence
$$0\To U\To M_s\conv M'_s\To V\To0$$
such that
$U$ and $V$ have \ncf.
\ee
Then,  $\shm'$ is a Laurent family in $\shc$
if $\shm$ is a Laurent family.
\enprop
\Proof
By the preceding lemma, $\shm'$ is independent.

Let $X$ be a simple module in $\shc$ which commutes with all $M'_k$.
Let us show that $X$ is an $\shm'$-monomial.

By the same argument as the one in the proof of the preceding proposition,
there exists $n\in\Z_{\ge0}$ such that
any simple subquotient of $X\conv M_s^n$ commutes with all members of $\shm$.
Hence there exist  finitely  many $\bfb(t)\in \Z_{\ge0}^{\oplus K}$ 
such that
$$[X]\cdot[M_s]^n=\sum_t[\shm(\bfb(t))].$$
Set $\bar\bfb(t)=\bfb(t)-\bfb(t)_s\bfe_s$.
Multiplying $[M'_s]^n$, we obtain
\eqn
[X]\cdot([U]+[V])^{n}
&&=\sum_{\bfb(t)_s\ge n}[\shm(\bfb(t)-n\bfe_s)]\cdot(U+V)^{n}\\
&&\hs{10ex}+\sum_{\bfb(t)_s<n}[\shm(\bar\bfb(t)]\cdot(U+V)^{\bfb(t)_s}[M'_s]^{n-\bfb(t)_s}.
\eneqn
The right-hand side is a sum of
$\shm$-monomials and $\shm'$-monomials.
Hence $[X]\cdot [U]^n$
is either an $\shm$-monomial or an $\shm'$-monomial.
Note that an $\shm$-monomial which commutes with $M'_s$ is
an $\shm'$-monomial because $\de(M_s,M'_s)>0$.
Since $X\conv U^n$ commutes with $M'_s$,
we conclude that $[X]\cdot [U]^n$ is an $\shm'$-monomial.
Similarly, $[X]\cdot [V]^n$ is also an $\shm'$-monomial.
Set
$U^{\circ n}\simeq\shm(\bfa)$ and $V^{\circ n}\simeq\shm(\bfa')$.
Then $a_s=a'_s=0$ and $a_k a'_k=0$ for any $k\in K$.

Thus there exist $\bfb$ and $\bfb'$ such that
 $$X\conv\shm'(\bfa)\simeq\shm'(\bfb)\qtq
X\conv\shm'(\bfa')\simeq\shm'(\bfb').$$
Hence we have $X\conv\shm'(\bfa)\conv\shm'(\bfa')
\simeq \shm'(\bfa')\conv\shm'(\bfb)\simeq\shm'(\bfa)\conv\shm'(\bfb')$,
which implies that
$$\bfa'+\bfb=\bfa+\bfb'.$$
Now, let us show that
$a_k\le b_k$ for any $k$.
If $a_k=0$, it is trivial.
Otherwise, we have $a'_k=0$,
which implies  $b_k=a_k+b'_k\ge a_k$.

\medskip
Thus we obtain $X\simeq\shm'(\bfb-\bfa)$.
\QED

\subsection{The operators $\K_i$ and $\K^*_i$} \label{Sec: Ki}
In this subsection, we introduce and study new operators $\K_i$ and $\K^*_i$ on simple modules, which will be crucially used to construct seeds of $\Cwv$.

\Def \label{Def: Ki and K*i}
For any $i\in I$ and a simple $M\in R\gmod$, we define
$$
\K_i(M) \seteq \tF_i^{a} \left( \tEm_i (M) \right) \qtq  \K^*_i(M)  \seteq \tFs_i{}^{\ms{3mu} b} \left( \tEsm_i (M)\right),
$$
where $a = \de_i(\tEm_i (M) )$ and $b =   \de_i(\tEsm_i (M)) $.
\edf
Note that 
$$
\K_i(M)\simeq \tF_i^{\de_i(M)}(M)\qt{if $\ang{i}$ is not a \cf of $M$.}
$$

Thanks to \cite[Proposition 2.16]{KKOP23A},  $\ang{i}$ is not a \cfac of $\K_i(M)$ and $\K^*_i(M)$, and 
\begin{equation} \label{Eq: de Ki Ksi}
\begin{aligned}
\de_i(\K_i(M)) &=0, \qquad   \K_i\bl\K_i(M)\br=\K_i(M), \\
\de_i(\K^*_i(M)) &=0, \qquad \K^*_i \bl \K^*_i(M)\br=\K^*_i(M).
\end{aligned}
\end{equation}

For a family $S = \{ M_k \}_{k\in K}$ of simple modules, we write $\K_i(S) \seteq  \st{ \K_i(M_k)}_{ k\in K }$ and 
$\K^*_i(S) \seteq  \st{ \K^*_i(M_k)}_{k\in K }$. 

We now show several properties of $\K_i$ and $\K_i^*$ in the following lemmas, which will play a crucial role in constructing seeds in $\Cwv$.

\begin{lem} \label{Lem: basics for Ki}
Let  $M\in R\gmod$ be a simple module. 
\bnum
\item  \label{Lem: basics for Ki (i)}
If $M$ is affreal, then $\K_i(M)$ and $\K^*_i(M)$ are affreal.

\item \label{Lem: basics for Ki (ii)}
The following conditions are equivalent:
\bna
\item 
$\de_i(M)=0$ and $\ang{i}$ is not a \cfac of $M$,
\item $\K_i(M)\simeq M$,
\item $\K^*_i(M)\simeq M$.
\ee
\item  \label{Lem: basics for Ki (iii)}
If $M\in(R\gmod){}_i$, then $\K_i(M)\simeq\K_i^*\F_i(M)$
\item If $M\in(R\gmod){}_i \cap {}_i(R\gmod)$, then $\K_i(M)\simeq\F_i(M) $.
\ee	
\end{lem}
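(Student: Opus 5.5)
For (i), I would write $\K_i(M)=\tF_i^a(N)$ with $N\seteq\tEm_i(M)$ and $a=\de_i(N)$. Since $M\simeq\ang{i^{\ep_i(M)}}\hconv N$, the module $N$ is a convolution-type "factor" of $M$, and the known results on crystal operators applied to affreal modules (\cite[Proposition 2.16]{KKOP23A}, together with the fact that $\tEm_i$ preserves reality, cf.\ \cite{KKOP21A}) show that $N$ is affreal. Next, for $k\le a=\de_i(N)$, I would show that $\tF_i^k N=\ang{i^k}\hconv N$ is affreal. The tool is Lemma~\ref{Lem: MhconvN real} with $X=\ang{i^k}$ and $Y=N$: both are affreal, and the head admits an affinization, because it is a simple head of the convolution of affinizations. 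In the symmetric case this is automatic, and in general I would follow the same argument as in \cite{KKOP24A}. It then suffices to check that $\tF_i^kN$ commutes with $\ang{i^k}$. To get this, I would use $\de_i(\tF_i^{a}N)=0$ from \eqref{Eq: de Ki Ksi}, so $\tF_i^aN$ commutes with $\ang{i}$ and hence with every $\ang{i^k}$. The corollary after Corollary~\ref{cor: MNL12} then passes commutation down to $\tF_i^kN$ for $k\le a$. The $\K_i^*$ case is symmetric.

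For (ii), (b)$\Rightarrow$(a) is immediate from \eqref{Eq: de Ki Ksi}: $\de_i(\K_iM)=0$ and $\ang{i}$ is not a convolution factor of $\K_iM$. For (a)$\Rightarrow$(b), Lemma~\ref{lem:ifac} gives $\de_i(\tE_iM)>0$ whenever $\ep_i(M)>0$, since $\ang{i}$ is not a factor. The key fact is then that along the $i$-string $\tF_i^k\tEm_iM$, the value $\de_i$ is nonincreasing in $k$ and is eventually $0$. It drops by exactly one per step until it reaches $0$, and the first $k$ at which it is $0$ is $a=\de_i(\tEm_iM)$, by \cite[Proposition 2.16]{KKOP23A}. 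Since $\de_i(M)=0$ and $M$ lies on this string, $M=\tF_i^a\tEm_iM=\K_iM$. The equivalence (a)$\Leftrightarrow$(c) follows by the same argument with the star operators, since condition (a) is symmetric ($\de_i$ and the convolution-factor condition are $*$-symmetric).

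For (iii), let $M\in(R\gmod)_i$, so $\ep_i(M)=0$ and $\tEm_iM=M$, giving $\K_iM=\tF_i^{\de_i(M)}M=:M_1$. Then $\de_i(M_1)=0$ and $\ang{i}$ is not a convolution factor of $M_1$ (by \eqref{Eq: de Ki Ksi}), so $\tEm_iM_1=M$. By Lemma~\ref{lem:tEmst}(i), $\tEsm_iM_1\simeq\F_i(M)$. Hence $M_1$ lies on the $*$-string through $\F_i(M)$, and by (ii) $M_1\simeq\K_i^*(M_1)$. The remaining step is to identify $\K^*_i(M_1)$ with $\K_i^*\F_i(M)$. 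Since $\K_i^*$ depends only on $\tEsm_i$ of its argument, and $\tEsm_iM_1\simeq\F_i(M)\simeq\tEsm_i\F_i(M)$ (because $\F_i(M)\in{}_i(R\gmod)$), this gives $\K_iM\simeq\K_i^*\F_iM$.

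For (iv), assume also $\ep^*_i(M)=0$. Then \eqref{Eq: F_i in crystal} gives $\F_i(M)=\tF_i^{\ph_i^*(M)}M$, and $\ph_i^*(M)=\ep_i^*(M)+\ang{h_i,\wt M}=\de_i(M)$. So $\F_iM=\tF_i^{\de_i(M)}M=\K_iM$, using $\ep_i(M)=0$.

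The main obstacle I expect is (i): justifying the affinization of the heads in the non-symmetric setting and the reality of $\tEm_iM$. I would first try to reduce both to existing statements in \cite{KKOP21A,KKOP23A}.
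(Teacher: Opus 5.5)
Parts (ii)--(iv) are fine, but there is a genuine gap in (i), at exactly the step you flagged. Lemma~\ref{Lem: MhconvN real} takes the existence of an affinization of $X\hconv Y$ as a \emph{hypothesis}. Your justification, ``it is a simple head of the convolution of affinizations'', does not supply one. A convolution of affinizations of $\ang{i^a}$ and of $N$ carries two independent spectral parameters. No result in the paper or in the cited works turns this into a one-parameter affinization whose special fiber is the head $\ang{i^a}\hconv N$.

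Following \cite{KKOP24A} does not help either. For quantum affine algebras, affinizations come for free from spectral shifts, and the paper stresses that the argument of \cite[Lemma 2.23]{KKOP24A} transfers only once the affinization is available. With the fix below, your argument does settle (i) when $R$ is symmetric, since then every simple module has an affinization. However, this lemma sits in Section~\ref{Sec: Prelim}, before the symmetry assumption of Section~\ref{Sec: MC Cw}, and is meant for arbitrary symmetrizable $R$.

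The missing idea is the paper's cyclotomic construction:
\begin{enumerate}
\item Reduce to $\ep_i(M)=0$ via \cite[Lemma 3.3]{KKOP21A}.
\item Put $\La=\sum_{j\in I}\ep^*_j(M)\La_j$ and use the deformed cyclotomic parameters $a_{\La,j}(t_j)=\bchis_j(\Ma)(t_j)$. Then an affinization $\Ma$ of $M$ becomes a module over $R^{a_\La}_A(\La+\beta)$ killed by $\qE_i$.
\item Since $\ang{h_i,\La+\beta}=\de_i(M)=:n$, the equivalence $\qF_i^{\La(n)}\cl R^{a_\La}_A(\La+\beta)\Mod\cap\Ker\qE_i\to R^{a_\La}_A(s_i(\La+\beta))\Mod\cap\Ker\qF_i^{\La}$ of \cite[Lemma 4.14]{R08} sends $\Ma$ to an affinization of $\qF_i^{\La(n)}(M)\simeq\K_i(M)$.
\item Only then does Lemma~\ref{Lem: MhconvN real}, applied with $X=\ang{i^n}$ and $Y=M$, give reality.
\end{enumerate}

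Separately, restrict your reality step to $k=a$. For $0<k<a$ you have $\de_i(\tF_i^kN)=a-k>0$, so $\tF_i^kN$ does \emph{not} commute with $\ang{i^k}$. Also, the corollary following Corollary~\ref{cor: MNL12} cannot be applied there, because its hypothesis would require $\ang{i^k}$ to commute with $N$, which fails when $a>0$. For $k=a$, commutation is immediate from $\de_i(\K_iM)=0$, and that is all you need.

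On the remaining parts:
\begin{itemize}
\item In (ii) you use the same input as the paper, namely \cite[Proposition 2.16]{KKOP23A} and Lemma~\ref{lem:ifac}. Make explicit that $\ep_i(M)=\de_i(\tEm_iM)$ comes from combining $\de_i(M)=0$ (giving $\ge$) with $\de_i(\tE_iM)>0$ (giving $\le$).
\item Your (iii) takes a different and cleaner route than the paper. The paper computes $\de_i(\F_iM)=\ep^*_i(M)$ and commutes $\tFs_i$ past $\tF_i$ by \cite[Theorem 2.17]{KKOP23A}. You instead combine Lemma~\ref{lem:tEmst}\,(i) with (ii)\,(a)$\Rightarrow$(c) and the observation that $\K^*_i$ depends only on $\tEsm_i$ of its argument. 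This is valid.
\item Part (iv) is a correct direct computation.
\end{itemize}
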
	
\begin{proof}
(i) Since the case for $ \K^*_i$ can be proved by the same argument, we only prove  the case for $\K_i$.

Let  $(\Ma, \zM )$ be an affinization of $M$. Thanks to \cite[Lemma 3.3]{KKOP21A}, we may assume that $\ep_i(M)=0$. 
We set  $\La \seteq  \sum_{i\in I}  \ep^*_i(M) \La_i  \in \pwtl$ and define $a_\La \seteq  \{  a_{\La, i} (t_i) \}_{i\in I}$ by 
$$
a_{\La, i} (t_i) \seteq  \bchis_i(\Ma) (t_i) \in \Z[\zM, t_i] \qquad \text{ for any $i\in I$,}
$$
where  $\bchis_i(\Ma)$ is the lift of $\ep^*_i(M)$ defined in \cite[Definition 9.1]{KKOP24D} (see also \cite[Definition 3.4]{KKOP21A}).
We write $\beta \seteq  \wt(M)\in\nrtl $, $A \seteq  \Z[\zM]$, and denote by $R^{a_\La}_A$ the cyclotomic quiver Hecke algebra defined in \cite[(1.2)]{KKOP21A}.  By the construction, $\Ma$ is contained in  $R^{a_\La}_A(\La + \beta)\gMod \cap \Ker \qE_{i}$. 
Since we have 
$$
n \seteq  \ang{h_i, \La + \beta} = \ang{h_i, \La  } + \ang{h_i,   \beta} = \de_i(M),
$$
the following category equivalence (see \cite[Lemma 4.14]{R08} and \cite{KK12}) 
$$
\xymatrix{
R^{a_\La}_A(\La + \beta)\Mod \cap \Ker \qE_{i}  \ar[rr]^{\qF_i^{\La  (n)}} && R^{a_\La}_A(s_i (\La + \beta))\Mod \cap \Ker \qF_{i}^{\La}
}
$$
implies that $ \qF_i^{\La  (n) } (\Ma)$ is an affinization of $\qF_i^{\La  (n) } (M) = \K_i(M)$, where $\qF_i^{\La}$ is the functor over $R^{a_\La}_A$ categorifying the action of $f_i$ on $V(\La)$ (see \cite[(1.3)]{KKOP21A}).  

The reality of $\K_i(M)$ follows by applying Lemma \ref{Lem: MhconvN real} to the setting $X = \ang{i^{\de_i(M)}}$ and $Y = M$.

\mnoi
(ii) Set $N\seteq \tEm_i \tEsm_i (M) \simeq \tEsm_i \tEm_i (M)$.
By \cite[Proposition 2.16]{KKOP23A}, the conditions are equivalent to $\de_i(N) = \ep_i(M) + \ep^*_i(M)$. 

\snoi
(iii) Let $M \in (R\gmod)_i$ and write $M' \seteq  \tEsm_i(M)$.  Then we have
\begin{align*}
\de_i(M) &= \ep_i^*(M) + \ang{h_i, \wt(M)} =\ph_i^*(M), \\ 
\de_i(M') &= \ang{h_i, \wt(M')} = 2\ep_i^*(M) + \ang{h_i, \wt(M)} = \de_i(M) + \ep^*_i(M),
\end{align*}
which implies that 
\begin{align*}
\K_i^* \F_i(M)  &=  \K_i^*  \left( \tF_i^{\ph^*_i(M)} (M') \right) 	
= \tF_i^{*\ep^*_i(M)}  \tF_i^{\ph^*_i(M)} (M') \\
&=  \tF_i^{\ph^*_i(M)}  \left( \tF_i^{*\ep^*_i(M)}  (M') \right) = \tF_i^{\de_i(M)} (M) \\ 
 &= \K_i(M),
\end{align*}	 
where the third equality follows from \cite[Theorem 2.17]{KKOP23A}.

\mnoi
(iv) It can be proved by the same argument in (iii).
\end{proof}

\Lemma \label{Lem: KiM KiN}
Let $M$ and $N$ be simple modules and one of them is affreal.
If $M$ and $N$ commute, then 
\bnum
\item \label{it: Ki commute}  $\K_i(M)$ and $\K_i(N)$ commute and $\K_i(M\conv N)\simeq\K_i(M)\conv\K_i(N)$,
\item \label{it: Kis commute} $\K^*_i(M)$ and $\K^*_i(N)$ commute and $\K^*_i(M\conv N)\simeq\K^*_i(M)\conv\K^*_i(N)$.
\ee
\enlemma
\begin{proof}
\eqref{it: Ki commute} \  Note that $  \tEm_i  (M\conv N)\simeq(\tEm_i M)\conv (\tEm_iN)$.
Hence replacing $M$ and $N$ with $\tEm_i M$ and $\tEm_iN$,
we may assume that $\eps_i(M)=\eps_i(N)=0$. 

Let $m = \de_i(M)$ and $n = \de_i(N)$. Since $ m+ n = \de_i(M\conv N) $, we have   
\begin{align*}
\K_i(M \conv N) & = \ang{i}^{\circ (m+n)} \hconv (M \conv N) \simeq \left( \ang{i}^{\circ (m+n)} \hconv M \right) \hconv N \\
& \simeq  \left( \ang{i}^{\circ  n} \hconv \K_i(M) \right)  \hconv N \simeq \left(  \K_i(M) \hconv \ang{i}^{\circ  n} \right) \hconv N \\
& \simeq   \K_i(M) \hconv  \left( \ang{i}^{\circ  n}  \hconv N \right) \\
& \simeq \K_i(M) \hconv \K_i(N).
\end{align*}
In the same manner, we have $\K_i(N \conv M)  \simeq \K_i(N) \hconv \K_i(M) $. 
As $M\conv N \simeq N \conv M$, we have 
$$ 
\K_i(M) \hconv \K_i(N)  \simeq  \K_i(N) \hconv \K_i(M).
$$  
Then  Lemma \ref{Lem: basics for Ki} \eqref{Lem: basics for Ki (i)} says that one of $\K_i(M)$  and $\K_i(N)$ is  affreal,  which implies the assertion by \cite[Corollary 3.9]{KKKO15} and \cite[Proposition 2.10]{KP18}. 

\smallskip

\noindent
\eqref{it: Kis commute} It can be proved in the same manner as above. 
\end{proof}

\Lemma\label{lem:MNa}
Let $M$ be an  affreal simple module, $N$ a simple module and $i\in I$.
Let $a\in\Z$ such that $0\le a\le \de_i(N)$.
We assume that $M$ and $N$ commute.
\bnum
\item \label{it: Ki Fisa commute} If $M$ and $\tFs_i{}^aN$ commute, then $\K_i(M)$ and $\tF_i^{\de_i(N)-a}N$ commute. 
\item \label{it: Kis Fia commute} If $M$ and $\tF_i^a N$ commute, then $\K^*_i(M)$ and $\tF_i^{* \de_i(N)-a}N$ commute. 
\ee

\enlemma
\Proof
Since~\eqref{it: Kis Fia commute} can be proved in the same manner, we focus on~\eqref{it: Ki Fisa commute}.

We may assume that $\ang{i}$ is not a \cfac of $M$.
Set $n \seteq \de_i(N)-a\in\Z_{\ge0}$ and $m \seteq \de_i(M)$.
Since $ (\tF_i^n N, \K_iM, \ang{i^a})$ is normal by the simplicity of
$\K_iM\conv\ang{i^a}$, we have
\eqn
\tFs_i{}^a\bl\tF_i^nN\hconv\K_iM\br
&&\simeq \hd\bl \tF_i^n N\conv\K_iM\conv\ang{i^a}\br  \allowdisplaybreaks\\
&&\simeq  \hd\bl (\tF_i^n N)\conv\ang{i^a}\conv\K_iM\br  \allowdisplaybreaks\\
&& \simeq  \hd\bl \tF_i^n( N\hconv\ang{i^a})\conv\K_iM\br  \allowdisplaybreaks\\
&&\simeq  \hd\bl \K_i(N\hconv\ang{i^a})\conv\K_iM\br  \allowdisplaybreaks \\
&& \underset{*}\simeq  \K_i\bl (\tFs_i{}^aN)\conv M\br,
\eneqn
where $\underset{*}\simeq$ follows from Lemma \ref{Lem: KiM KiN}.
On the other hand, we have
\eqn
\K_iM\hconv\tF_i^nN
&&\simeq \hd\bl \K_iM\conv\ang{i^n}\conv N\br \simeq \hd\bl\ang{i^n}\conv \K_iM\conv N\br\\
&&\simeq \hd\bl\ang{i^{n+m}}\conv M\conv N\br,
\eneqn
which implies 
\eqn
\tFs_i{}^a\bl\K_iM\hconv\tF_i^n N\br
&&\underset{*}\simeq\hd\bl \ang{i^{n+m}}\conv M\conv N\conv\ang{i^a}\br\\
&&\simeq\hd\bl \ang{i^{n+m}}\conv M\conv (\tFs_i{}^aN)\br\\
&&\simeq \K_i(M\conv \tFs_i{}^aN).
\eneqn
Here $\underset{*}\simeq$ follows the fact that
$\ang{i^b}\conv X\conv \ang{i^c}$ has a simple head
for any simple $X$ if $b+c\le\de_i(X)$ (\cite[Lemma 2.17]{KKOP23A}).
In conclusion,  we have
$$\tFs_i{}^a\bl\tF_i^nN\hconv\K_iM\br
\simeq\tFs_i{}^a\bl   \K_iM\hconv\tF_i^nN\br,
$$
which implies that
$\tF_i^nN\hconv\K_iM\simeq \K_iM\hconv\tF_i^nN$. 
Thus we have the assertion. 
\QED

  \Lemma\label{lem:Kmain}
Let $M$ and $N$ be simple modules.
Assume that one of them is \afr.
\bnum
\item \label{it:Mainhconv}
If $\ep_i(N)=0$, then we have
\eqn
&&\tF_i^{\de_i(M)+\de_i(N)}(M\hconv N)\simeq\K_i(M)\hconv\K_i(N)\qtq\\
&&  \La\bl\K_i(M),\K_i(N)\br=\La(M,N)+\de_i(M)\cdot\La(\ang{i},N)
-\de_i(N)\cdot\La(\ang{i}, M).
\eneqn
In particular, we have
$$\K_i(M)\hconv\K_i(N)\simeq\ang{i^a}\conv     \K_i(M\hconv N)
\qt{where $a=\de_i(M)+\de_i(N)-\de_i(M\hconv N)$.}$$
\item \label{it:Mainhconv2} 
If $\ep^*_i(M)=0$, then we have
\eqn
&&\tF_i^{* \de_i(M)+\de_i(N)}(M\hconv N)\simeq\K^*_i(M)\hconv\K^*_i(N)\qtq\\
&&  \La\bl\K^*_i(M),\K^*_i(N)\br=\La(M,N)+\de_i(N)\cdot\La(M, \ang{i})
-\de_i(M)\cdot\La( N, \ang{i}).
\eneqn
In particular, we have
$$\K^*_i(M)\hconv\K^*_i(N)\simeq      \K^*_i(M\hconv N) \conv \ang{i^a} 
\qt{where $a=\de_i(M)+\de_i(N)-\de_i(M\hconv N)$.}$$

\item\label{it:Mainde}
If $\ep_i(M)=\ep_i(N)=0$, then we have
\eqn
&&\de\bl\K_i(M),\K_i(N)\br=\de(M,N).
\eneqn
\item \label{it:Mainde2}
If $\ep^*_i(M)=\ep^*_i(N)=0$, then we have
\eqn
&&\de\bl \K^*_i(M),\K^*_i(N)\br=\de(M,N).
\eneqn

\ee

\enlemma
\Proof
\eqref{it:Mainhconv} 
We  set $m \seteq \de_i(M)$ and $n \seteq \de_i(N)$.  Since $(\ang{i},N)$
is an
unmixed pair, the triple $(\ang{i^{m+n}},M,N)$ is a normal sequence by \cite[Corollary 2.13]{KKOP23A}.
Hence   $\ang{i^{m+n}}\conv M\conv N$ has a simple head and
\begin{align*}
\hd\bl\ang{i^{m+n}}\conv M\conv N\br &\simeq
\hd\bl\ang{i^{n}}\conv\K_i(M)\conv N\br
\simeq\hd\bl\K_i(M)\conv\ang{i^{n}}\conv N\br \\
&\simeq\K_i(M)\hconv\K_i(N).
\end{align*}
The normality of $(\ang{i},M,N)$ tells us that
$\La\bl\K_i(M),N\br=\La(\ang{i^m},N)+\La(M,N)$,
which implies 
\eqn
\La\bl\K_i(M),\K_i(N)\br
&&     =     \La\bl\K_i(M),\ang{i^n}\hconv N\br
=\La\bl\K_i(M),\ang{i^n}\br+\La\bl\K_i(M),N\br\\
&&     =-n\La\bl\ang{i},\K_i(M)\br+\La(\ang{i^m},N)+\La(M,N)\\
&&     =-n\La(\ang{i},M)+m\La(\ang{i},N)+\La(M,N),
\eneqn
where the second and third equality follow from the fact that $\K_i(M)$ and $\ang{i}$ commute.
Since $\de_i(M)+\de_i(N)\ge \de_i(M\hconv N)$ (see \cite[Section 3.2]{KKKO18} and see also \cite[Proposition 4.2]{KKOP20}), we have the last assertion. 

\smallskip
\noindent
\eqref{it:Mainhconv2} It can be proved in the same manner as~\eqref{it:Mainhconv}.

\smallskip
\noindent
\eqref{it:Mainde} and~\eqref{it:Mainde2} follow from~\eqref{it:Mainhconv} and~\eqref{it:Mainhconv2} respectively. 
\QED

For a module $M \in R\gmod$, we denote by $\ell(M)$ the \emph{composition length of $M$}.
\begin{lem}  \label{lem:ifactor}
Let $M$ and $N$ be simple modules such that one of them is affreal and  $\ell(M\conv N)\le2$. 
\bnum
\item \label{it: epiMN0}
If $\ep_i(M)=\ep_i(N)=0$, 
then we have either 
\bna
\item $\K_i(M) \hconv \K_i(N) \simeq  \K_i(M \hconv N)$ and $\de_i(M)+\de_i(N)= \de_i(M\hconv N)$, or 
\item $\K_i(N) \hconv \K_i(M) \simeq  \K_i( N\hconv M)$ and 
$\de_i(M)+\de_i(N)= \de_i(N\hconv M)$. 
\ee

\item \label{it: episMN0}
If $\ep^*_i(M)=\ep^*_i(N)=0$, 
then we have either 
\bna
\item $\K^*_i(M) \hconv \K^*_i(N) \simeq  \K^*_i(M \hconv N)$ and $\de_i(M)+\de_i(N)= \de_i(M\hconv N)$, or 
\item $\K^*_i(N) \hconv \K^*_i(M) \simeq  \K^*_i(N \hconv M)$ and 
$\de_i(M)+\de_i(N)= \de_i(N\hconv M)$. 
\ee
\ee

\end{lem}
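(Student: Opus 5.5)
We prove only \eqref{it: epiMN0}; \eqref{it: episMN0} follows by the same argument with $\K_i$, $\tF_i$, $\ep_i$ replaced by $\K^*_i$, $\tFs_i$, $\ep^*_i$ and Lemma~\ref{lem:Kmain}~\eqref{it:Mainhconv2} in place of \eqref{it:Mainhconv}. Set $m\seteq\de_i(M)$ and $n\seteq\de_i(N)$. The plan is to compare the $\de_i$-values of the two heads $M\hconv N$ and $N\hconv M$ with $m+n$, using the class $[M\conv N]$ in the Grothendieck group.

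First, if $M$ and $N$ commute, then $M\conv N$ is simple and Lemma~\ref{Lem: KiM KiN} gives $\K_i(M\conv N)\simeq\K_i(M)\conv\K_i(N)$. Since $\de_i$ is additive on convolution products of simples, we also get $\de_i(M\conv N)=m+n$, so (a) holds.

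Assume now that $M$ and $N$ do not commute. The condition $\ell(M\conv N)\le2$ forces $\ell(M\conv N)=2$, and by \cite{KKKO15} the two composition factors are $M\hconv N$ and $N\hconv M$. This gives
\[
[M\conv N]=[M\hconv N]+[N\hconv M]
\]
up to grading shifts. The key step is to show that at least one of these two factors has $\de_i$ equal to $m+n$.

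To prove this, I would compute $\ep_i$ and $\ep^*_i$ through the functor $\E_i^{*}$ and the shuffle lemma. Since $\ep_i(M)=\ep_i(N)=0$, we have $\ep_i$ of every composition factor of $M\conv N$ equal to $0$. The quantity $\ep^*_i$ of a factor is bounded by $\ep^*_i(M)+\ep^*_i(N)$. Moreover, the top piece $\E_i^{*(\ep^*_i(M)+\ep^*_i(N))}(M\conv N)$ is nonzero and equals $(\E_i^{*\max}M)\conv(\E_i^{*\max}N)$ up to the $\ang{i}$-part. Hence some composition factor $S$ of $M\conv N$ attains $\ep^*_i(S)=\ep^*_i(M)+\ep^*_i(N)$. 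For this factor, additivity of the weight gives $\de_i(S)=m+n$.

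Now $S$ is either $M\hconv N$ or $N\hconv M$. Suppose $S=M\hconv N$. Lemma~\ref{lem:Kmain}~\eqref{it:Mainhconv} gives
\[
\K_i(M)\hconv\K_i(N)\simeq\ang{i^a}\conv\K_i(M\hconv N)\quad\text{with } a=m+n-\de_i(M\hconv N)=0,
\]
which is exactly (a). If instead $S=N\hconv M$, apply the same lemma with the roles of $M$ and $N$ swapped. This is allowed because both $\ep_i$ vanish and the affreal hypothesis is symmetric, and it yields (b).

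The main obstacle is the existence of a factor with $\ep^*_i$ exactly additive. If the shuffle argument is delicate, I would fall back on the inequality $\de_i(X\hconv Y)\le\de_i(X)+\de_i(Y)$ applied to both heads. Writing $\de_i=\ep^*_i+\ang{h_i,\wt}$ (since $\ep_i=0$ throughout), the equality $\wt(M\hconv N)=\wt(N\hconv M)$ reduces the question to $\max(\ep^*_i(M\hconv N),\ep^*_i(N\hconv M))=\ep^*_i(M)+\ep^*_i(N)$. This follows because $\ep^*_i(M\conv N)$, the maximum over composition factors, equals $\ep^*_i(M)+\ep^*_i(N)$ by the shuffle formula.
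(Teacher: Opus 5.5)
Your proof is correct, and it reaches the key equality $\de_i(S)=\de_i(M)+\de_i(N)$ for one of the two composition factors by a more elementary route than the paper's.

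The paper applies Proposition~\ref{prop: Cao general} with $L=\ang{i}$. This gives a composition factor $S$ with $\La(S,\ang{i})=\La(M,\ang{i})+\La(N,\ang{i})$. The paper then uses normality of $(\ang{i},M,N)$ (resp.\ $(\ang{i},N,M)$), which comes from unmixedness of $(\ang{i},N)$ (resp.\ $(\ang{i},M)$), to make $\La(\ang{i},S)$ additive as well, and hence $\de_i(S)$.

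You work directly with $\de_i=\ep_i+\ep^*_i+\ang{h_i,\wt}$ instead. Exactness of $\E_i$ forces $\ep_i=0$ on every composition factor, so the half of $\de_i$ corresponding to $\La(\ang{i},-)$ is automatically additive and no normality argument is needed. Exactness of $\E^*_i$, together with $\sgW(M\conv N)=\sgW(M)+\sgW(N)$, then produces a factor whose $\ep^*_i$ is additive. Since $\La(X,\ang{i})$ is determined by $\ep^*_i(X)$ and $\wt(X)$, this is the same fact the paper extracts from Proposition~\ref{prop: Cao general}.

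What each route buys:
\begin{itemize}
\item Yours avoids the R-matrix machinery behind Proposition~\ref{prop: Cao general} and the normality step.
\item The paper's stays inside the $\La$-invariant framework, which works for an arbitrary affreal $L$ in place of $\ang{i}$.
\end{itemize}

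Your final step is the paper's: apply Lemma~\ref{lem:Kmain} with $a=0$, swapping $M$ and $N$ for case (b).

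One small point: your claim that the top piece of $\E^{*}_i$ ``equals $(\E_i^{*\max}M)\conv(\E_i^{*\max}N)$ up to the $\ang{i}$-part'' is imprecise as stated. Only its nonvanishing is used, and that follows from the $\sgW$-additivity, as in your fallback paragraph.
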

\begin{proof}
We focus on proving~\eqref{it: epiMN0} since~\eqref{it: episMN0} can be shown in the same manner.

If $\ell(M\conv N)=1$, then the assertion is obvious.
Hence, we may assume that $\ell(M\conv N)=2$.

Then,  we have the short exact sequence
$$
0 \to N \hconv M \to M \conv N
 \to M \hconv N \to 0. 
$$
By Proposition \ref{prop: Cao general}, we have either
\begin{itemize}
\item  $\La(M \hconv N,\ang{i}) = \La(M,\ang{i}) + \La(N,\ang{i})$ or 
\item  $\La(N \hconv M,\ang{i}) = \La(M,\ang{i}) + \La(N,\ang{i})$. 
\end{itemize}
Since the proof for the other case is similar, we assume the first case, i.e., 
\begin{align} \label{Eq: LMNi=LMi+LNi}
\La(M \hconv N,\ang{i}) = \La(M,\ang{i}) + \La(N,\ang{i}).
\end{align}
 We write $m = \de_i(M)$ and $n = \de_i(N)$.
The triple $(\ang{i},M,N)$ is normal as the pair $(\ang{i},N)$ is unmixed. Thus, by \eqref{Eq: LMNi=LMi+LNi}, we have 
\begin{align*}
 \de_i(M \hconv N) & = \frac{1}{(\al_i,\al_i)}  \bl \La(\ang{i},M\hconv N) + \La(M\hconv N,\ang{i}) \br
 \allowdisplaybreaks\\
& =  \frac{1}{(\al_i,\al_i)}  \bl \La(\ang{i},M) + \La(\ang{i},N) + \La(M,\ang{i}) + \La( N,\ang{i}) \br  \allowdisplaybreaks\\
& = \de_i(M) + \de_i(N).
\end{align*}
Hence the result follows from Lemma~\ref{lem:Kmain}. 
\end{proof}

\Lemma \label{Lem: primeness}
Let $M$ be a prime simple module. 
\bnum
\item \label{it: epi0prime}
If $\ep_i(M)=0$, then $\K_i(M)$ is prime.
\item \label{it: epis0prime} If $\ep^*_i(M)=0$, then $\K^*_i(M)$ is prime.
\ee
\enlemma
\Proof
As~\eqref{it: epis0prime} can be proved in the same manner, we prove only~\eqref{it: epi0prime}.

Let $\K_i(M)\simeq X\conv Y$.
Then we have
$$M\simeq\tEm_i\K_i(M)\simeq\tEm_iX\conv\tEm_iY.
$$
This implies that 
either $\tEm_iX$ or $\tEm_iY$ is  isomorphic to $\one$, say
$\tEm_iX\simeq\one$. Then we have  $X\simeq\ang{i^n}$ for some $n\in\Z_{\ge0}$, which implies $n=0$ by Lemma \ref{Lem: basics for Ki} \eqref{Lem: basics for Ki (ii)}.
\QED

\Lemma\label{lem;ncf}
Let $i\in I$ and let $M$ and $N$ be \afr simple modules such that
$\eps_i(M)=\eps_i(N)=0$.
Assume further that $M$ and $N$ \ncf.
Then $\K_i(M)$ and $\K_i(N)$ \ncf.
\enlemma
\Proof
Assume that $S$ is a \cf of $\K_i(M)$ and $\K_i(N)$. 
Then $\tEm_i(S)$ is a  \cf of $M$ and $N$. Hence $\tEm_i(S) \simeq\one$
and hence $S\simeq\ang{i^n}$ for some $n\in\Z_{\ge0}$.
Since $S$ is a \cf of $\K_i(M)$, we have $n=0$.
\QED

\vskip 2em

\section{Laurent families in  $\Cwv$}  \label{Sec: Laurent Cwv}

In this section, we investigate how the operations $\K_i$ and $\F_i$ act on Laurent families in the category $\Cwv$. 

\subsection{Stability for $\Cwv$}

Let $w, v \in \weyl$. 
From the definitions of $\gW$ and $\gW^*$, we have
$$
\gW(X)\cup\gW(Y)\subset\gW(X\conv Y) \qtq \gW^*(X)\cup\gW^*(Y)\subset\gW^*(X\conv Y)
$$
for {\em non-zero} modules $X$ and $Y$ in $R\gmod$. Thus we obtain the following:
\begin{equation} \label{Eq:cfactorst}
\begin{aligned}
&\text{ $X\conv Y\in\Cw$ implies that  $X$ and $Y$ belong to $\Cw$,} \\
&\text{ $X\conv Y\in\Cwv[*,v]$ implies that $X$ and $Y$ belong to $\Cwv[*,v]$.}
\end{aligned}
\end{equation}

\Prop\label{prop:stability}
Let $w,v\in\weyl$ such that $s_iw<w$.
\bnum
\item \label{prop:stability (i)}
$\tEm_i$ sends any simple in $\Cwv$ to a simple  in $\Cwv[s_iw,v]$.
\item \label{prop:stability (ii)}
If $s_iv<v$, then
$\F_i\cl\Cwv[s_iw,s_iv]\to\Cwv$ is an equivalence.
\item \label{prop:stability (iii)} $\K_i$ sends any simple in either $\Cw[s_iw]$ or $\Cw[w]$ to a simple in $\Cw$.
\item \label{prop:stability (iv)} $\K_i$ sends   any simple in $\Cw[*,v]$ to a simple in $\Cw[*,v]$ whenever $s_iv>v$.
\ee
\enprop
\Proof
By the assumption, we have $\ang{i} \in \Cw$.

\smallskip
\noindent
\eqref{prop:stability (i)} follows from Lemma~\ref{lem:tEmst} and the fact that 
$\Cw[*,v]$ is stable by $\tE_i$.

\smallskip
\noindent
\eqref{prop:stability (ii)} follows from Proposition~\ref{prop:Refwv}.

\smallskip
\noindent
\eqref{prop:stability (iii)}
If $M$ is a simple in $\Cw[s_i w]$,  then the statement follows from Proposition \ref{prop:Refwv} and Lemma \ref{Lem: basics for Ki} \eqref{Lem: basics for Ki (iii)}.
We now assume that $M$ is a simple in $\Cw$.
If $\de_i(M)=0$, then
$$
M\simeq \ang{i^n}\conv\K_i(M) \qquad \text{ for some $n\in\Z_{\ge0}$,}
$$ 
which gives the assertion by \eqref{Eq:cfactorst}.
If $\de_i(M)>0$, then the assertion follows from
$\K_i(M)\simeq \tF_i^{\;\de_i(M)}M$.

\smallskip
\noindent
\eqref{prop:stability (iv)} follows from the fact that $\Cw[*,v]$ is stable by $\tF_i$.
\QED

\Lemma\label{lem:bruD} 
Let $i\in I$ and $w,v\in\weyl$ such that $v\le w$.
\bnum
\item \label{it: siw<w}
Assume that $s_iw<w$.
Then we have
\bna\item
$\ep_i\bl\dM(w\La,v\La)\br\le -\ang{h_i,w\La}$ for any $\La\in\pwtl$,
\item   $\ep_i\bl\dM(w\La,v\La)\br=-\ang{h_i,w\La}$ for any $\La\in\pwtl$
if and only if $v\le s_iw$.
\ee
\item \label{it: v<siv}
Assume that $v<s_iv$.
Then we have
\bna\item
$\ep^*_i\bl\dM(w\La,v\La)\br\le \ang{h_i,v\La}$ for any $\La\in\pwtl$,
\item   $\ep^*_i\bl\dM(w\La,v\La)\br=\ang{h_i,v\La}$ for any $\La\in\pwtl$
if and only if $s_iv\le w$.
\ee
\ee
\enlemma
\Proof
Since the proof of~\eqref{it: v<siv} is similar, we prove only~\eqref{it: siw<w}.
Since (a) and ``if'' part of (b) are well known (see \cite[Lemma 9.1.4 and Lemma 9.1.5]{KKKO18} for example),
we will focus on proving the ``only if'' part of (b).

Assume that $\ep_i\bl\dM(w\La,v\La)\br=-\ang{h_i,w\La}$
for any $\La\in\pwtl$.
The isomorphism class of $\dM(w\La,v\La)$ 
corresponds to  the element of $\Aqn$ given by $U_q^+(\g)\ni a\mapsto (au_{w\La}, u_{v\La})\in\cor$.
Putting $n\seteq-\ang{h_i,w\La}$, the element $ \E^{(n)}_i\dM(w\La,v\La)$  corresponds to
$$
U_q^+(\g)\ni a\longmapsto (ae_i^{(n)}u_{w\La}, u_{v\La})
=(au_{s_iw\La}, u_{v\La}).
$$
Since it does not vanish, we have
$u_{v\La}\in U_q^+(\g)u_{s_iw\La}$, which implies $v\le s_iw$ by
Lemma~\ref{lem:bru}
\QED

In the following theorem, we give a criterion on $w$ and $v$ to determine whether $\ang{i}$ is central in $\Cwv$. 

\Th\label{th:ifro}
Let $i\in I$ and let $v,w\in\weyl$ such that 
$$
v\le w, \qquad v<s_iv \qtq s_iw<w.
$$
Then the following conditions are equivalent:
\bna
\item \label{it: i-center a}
$\ang{i}$ is a \cfac of $\dM(w\La,v\La)$ for some $\La\in\pwtl$,
\item \label{it: i-center b} $\ang{i}$ lies in the center of $\Cwv$, namely any simple in
$\Cwv$ commutes with $\ang{i}$,
\item \label{it: i-center c} $s_iv\not\le s_iw$.
\ee
\enth
\begin{proof}
We have $v\le s_iw$.
Note also that $\ang{ i }$ is contained in $\Cwv$ by the assumption.

\mnoi
\eqref{it: i-center a} $  \Rightarrow $ \eqref{it: i-center b} follows from \cite[Theorem 4.4]{KKOP23A}.

\mnoi
\eqref{it: i-center b} $  \Rightarrow $ \eqref{it: i-center c} \ 
Assuming that
\begin{align} \label{Eq: svlesw}
s_iv\le s_iw,
\end{align}
we shall show that there exists a simple in $\Cwv$ which does not commute with $\ang{i}$ by using induction on $\bl\ell(w),\ell(w)-\ell(v)\br$.
Here, the induction is taken with respect to the lexicographic order on $\Z_{\ge0}^2$, 
i.e.,
$(a,b) > (c,d)$ if and only if either ($a>c$) or ($a=c$ and $b>d$).

By \eqref{Eq: svlesw}, we have $\ell(s_iw)>0$.
Hence there exists $j\in I$ such that $s_iws_j<s_iw$, which implies 
\begin{align} \label{Eq: wsw swsws}
ws_j<w \qtq  s_iws_j<ws_j.
\end{align}
We now consider the following cases.
\bnum
\item Suppose that  $vs_j<v$. Since $ v s_j < v < s_i v$, we have 
$vs_j < s_i v s_j$ and $ s_ivs_j < s_i v $. By \eqref{Eq: svlesw} and \eqref{Eq: wsw swsws}, we have
$$
v s_j \le w s_j \qtq  s_ivs_j \le s_i w s_j.
$$
By the induction hypothesis, $\ang{i}$ is not in the center of $\Cwv[ws_j,vs_j]$.
There is an equivalence of monoidal categories $\tCwv\simeq\tCwv[ws_j,vs_j]$ which sends $\ang{i}$ to $\ang{i}$, where $\tCwv$ and $\tCwv[ws_j,vs_j]$ are the localized categories (\cite[Theorem 4.8]{KKOP24C}). Hence $\ang{i}$ is also not in the center of $\Cwv$.

\item Suppose that  $v< vs_j$. 
\renewcommand{\themyc}{\roman{myc}}
\setcounter{myc}{\value{enumi}}

\begin{enumerate} [({\themyc}-1)] 
\item   Assume $s_iv<s_ivs_j$. By \eqref{Eq: svlesw} and \eqref{Eq: wsw swsws}, we have
$$
v s_j \le w \qtq s_i v s_j \le s_i w.
$$
The induction hypothesis implies that 
$\ang{i}$ is not in the center of $\Cwv[w,vs_j]\subset \Cwv$.

\item 
Assume that $s_ivs_j<s_iv$.
Since 
$$
v < vs_j, \quad s_i(vs_j)<vs_j \qtq  s_ivs_j < s_iv = (s_ivs_j)s_j, 
$$
we have $s_ivs_j\le v$, which implies  
$$
v=s_ivs_j.
$$
This tells us that $v\al_j=\al_i$, i.e.,  $h_j=v^{-1}h_i$, and 
$$
\dM(w\La_j,s_iv\La_j)=\dM(w\La_j,vs_j\La_j)\in\Cwv[w,vs_j]\subset\Cwv.
$$
We have 
\eqn\ang{h_i, s_iv\La_j}=-\ang{v^{-1}h_i,\La_j}
=-\ang{h_j, \La_j}=-1.\eneqn
Hence, we have
\eqn
\hs{10ex}\de_i\bl\dM(w\La_j,s_iv\La_j)\br
=\eps_i\bl\dM(w\La_j,s_iv\La_j)\br+&&
\eps^*_i \bl\dM(w\La_j,s_iv\La_j)\br\\
&&+
\ang{h_i,w\La_j-s_iv\La_j}.
\eneqn
By Lemma~\ref{lem:bruD}, we have
$\eps_i\bl\dM(w\La_j,s_iv\La_j)\br=-\ang{h_i,w\La_j}$, and
$v<s_iv$ implies that $\eps_i^*\bl\dM(w\La_j,s_iv\La_j)\br=0$.
Hence we obtain
$\de_i\bl\dM(w\La_j,s_iv\La_j)\br=\ang{h_i,-s_iv\La_j}=1$.
Thus, $\dM(w\La_j,s_iv\La_j)$ does not commute with $\ang{i}$
since $\de_i\bl\dM(w\La_j,s_iv\La_j)\br=1$. 
\end{enumerate}
\ee

\mnoi
\eqref{it: i-center c} $ \Rightarrow $ \eqref{it: i-center a} \ 
Assume that $s_iv\not\le s_iw$.
As $v \le w$,  $v < s_i v$ and $s_iw < w$, we have $s_iv\le w$. Set 
$$
C \seteq  \dM(w\La,v\La) \qtq n\seteq  \ep_i(C)= -\ang{h_i,w\La}.
$$ 
By Lemma~\ref{lem:bruD}, there exists $\La\in\pwtl$
such that
$$
\eps_i\bl \tEsm_i(C) \br = \eps_i\bl\dM(w\La,s_iv\La)\br<n = \eps_i(C).
$$
Hence Lemma~\ref{lem:ifac} says that $\ang{i}$ is a \cfac of $C$.
\end{proof}

\begin{lem} \label{lem:iaKi}
Let $w,v \in \weyl$ and $i\in I$.
Assume that $v< w$,  $s_iw<w$, and $v<s_iv$.
Then for $\La \in \pwtl$ we have
\eqn
\dM(w\La, v\La) \simeq \ang{i^n} \conv \K_i(\dM(s_iw\La,v\La)) \quad \text{for some} \ n \in \Z_{\ge 0}.
\eneqn
\end{lem}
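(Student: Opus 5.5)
Set $C\seteq\dM(w\La,v\La)$ and $C'\seteq\dM(s_iw\La,v\La)$. The plan is to show that $\tEm_i(C)\simeq\tEm_i(C')$, that $\K_i(C)\simeq\K_i(C')$, and that $C$ differs from $\K_i(C)$ only by a power of $\ang{i}$. Note $C'$ makes sense: $v\le w$ and $v<s_iv$, $s_iw<w$ give $v\le s_iw$ by the lifting property of the Bruhat order.

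\emph{Step 1.} Compare the $\tE_i$-extremal parts. By Lemma~\ref{lem:bruD}\,(i) with $v\le s_iw$, we have $\ep_i(C)=-\ang{h_i,w\La}=:m$. The standard description of determinantial modules via extremal vectors gives $\tEm_i(C)\simeq \dM(s_iw\La,v\La)$. Indeed, the element $a\mapsto(ae_i^{(m)}u_{w\La},u_{v\La})=(au_{s_iw\La},u_{v\La})$ is the class of $C'$, as used in the proof of Lemma~\ref{lem:bruD} (cf.\ \cite[Lemma 9.1.4]{KKKO18}). Also $\ep_i(C')=0$, since $s_i(s_iw)>s_iw$ gives $\ang{h_i,s_iw\La}\ge0$.

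\emph{Step 2.} Compute $\K_i$ of both modules. By Definition~\ref{Def: Ki and K*i},
\[
\K_i(C)=\tF_i^{\de_i(C')}(\tEm_iC)=\tF_i^{\de_i(C')}(C')=\K_i(C').
\]
For the latter equality we use $\ep_i(C')=0$, so that $\tEm_i C'=C'$.

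\emph{Step 3.} Show that $C\simeq\ang{i^n}\conv\K_i(C)$, and hence the statement. The cleanest route is $\de_i(C)=0$: since $v<s_iv$, Lemma~\ref{lem:bruD}\,(ii) and the definition give
\[
\de_i(C)=m+\ep_i^*(C)+\ang{h_i,w\La-v\La}=\ep_i^*(C)-\ang{h_i,v\La},
\]
which vanishes once $\ep_i^*(C)=\ang{h_i,v\La}$. Lemma~\ref{lem:bruD}\,(ii)(b) yields this exactly when $s_iv\le w$, and here $s_iv\le w$ does hold: from $v\le w$ with $v<s_iv$ and $s_iw<w$, the lifting property gives $s_iv\le w$. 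With $\de_i(C)=0$, the argument in the proof of Proposition~\ref{prop:stability}\,(iii) applies. Write $C\simeq\ang{i^a}\conv C_0$ with $\ang{i}$ not a \cfac of $C_0$; this is possible because $\ang{i}$ is real and commutes with $C$ (as $\de_i(C)=0$), so one can peel off factors until $\ang{i}$ is no longer a \cfac. Then $\de_i(C_0)=\de_i(C)-a\de_i(\ang{i})$. Since $\de_i(\ang{i})=\ep_i+\ep_i^*+\ang{h_i,-\al_i}=0$, we get $\de_i(C_0)=0$, and Lemma~\ref{Lem: basics for Ki}\,(ii) gives $C_0\simeq\K_i(C_0)$. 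Finally $\K_i(C_0)\simeq\K_i(C)$, because $\tEm_i(C)\simeq\tEm_i(\ang{i^a}\conv C_0)\simeq\tEm_i(C_0)$ and $\K_i$ depends only on $\tEm_i$. Combining with Step~2 gives $C\simeq\ang{i^n}\conv\K_i(\dM(s_iw\La,v\La))$ with $n=a$.

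The main obstacle is Step~1, the identification $\tEm_i\dM(w\La,v\La)\simeq\dM(s_iw\La,v\La)$. I expect it to follow from the known behaviour of determinantial modules under $\tE_i$, combined with the fact that a self-dual simple is determined by the image of its class in the dual canonical basis. The decomposition in Step~3 may alternatively be read off from \cite[Proposition 2.16]{KKOP23A}.
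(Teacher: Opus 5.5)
Your proposal is correct and follows essentially the same route as the paper. The paper likewise takes $\tEm_i\dM(w\La,v\La)\simeq\dM(s_iw\La,v\La)$ as known, deduces $\K_i(\dM(w\La,v\La))\simeq\K_i(\dM(s_iw\La,v\La))$, and then uses $\de_i(\dM(w\La,v\La))=0$ together with Lemma~\ref{Lem: basics for Ki}\,(ii) to split off a power of $\ang{i}$; your Step~3 just spells out that last step. The one difference is how $\de_i(\dM(w\La,v\La))=0$ is obtained: the paper cites \cite[Theorem 4.4]{KKOP23A}, whereas you derive it from Lemma~\ref{lem:bruD} and the lifting property ($v\le s_iw$ and $s_iv\le w$), which is equally valid and more self-contained.
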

\begin{proof}
Since  $\dM(s_iw\La,v\La) \simeq \tE_i^{\max} (\dM(w\La, v\La))$, we have
$$
\K_i(\dM(w\La,v\La))= \K_i(\dM(s_iw\La,v\La)).
$$

Note that $\ang{i} \in \catC_{w,v}$ and  $\de_i(\dM(w\La,v\La))=0$ (\cite[Theorem 4.4]{KKOP23A}). 
 By Lemma \ref{Lem: basics for Ki} \eqref{Lem: basics for Ki (ii)},  we conclude that $$\dM(w\La,v\La) \simeq \ang{i^n} \conv \K_i(\dM(w\La,v\La)) \simeq \ang{i^n} \conv \K_i(\dM(s_iw\La,v\La))$$
for some $n \in \Z_{\ge 0}$.
\end{proof}

\subsection{Stability of Laurent families}
In this subsection, we investigate properties of $\K_i$ and $\F_i$ related to Laurent families.
Recall the notion of Laurent families given in Section \ref{Sec: prep}.

The following proposition says that $\F_i$ and $\K_i$ are compatible with Laurent families.

\Prop \label{prop: laurent}
Let $w,v\in\weyl$ and $i\in I$ such that $v<w$, $v<s_iv$ and $s_iw<w$, and 
let $\cM=\st{M_k}_{k\in K}$ be a Laurent family in $\Cwv[s_iw,v]$.
Then, we have
\bnum
\item \label{it: Fim Laurent}
$\F_i(\cM)$ is a Laurent family  in  $\Cwv[w,s_iv]$,
\item \label{it: Kim Laurent}
$\cM'\seteq\st{\ang{i}}\cup\K_i(\cM)$ is a Laurent family  in  $\Cwv[w,v]$.
\ee
\enprop
\Proof
~\eqref{it: Fim Laurent} is obvious since $\F_i\cl \Cwv[s_iw,v]\to\Cwv[w,s_iv]$ is an equivalence of monoidal categories by Proposition \ref{prop:stability} \eqref{prop:stability (ii)}.

\smallskip \noindent
\eqref{it: Kim Laurent} Set $K' \seteq \st{0}\sqcup K$ and 
$$
M'_0 \seteq \ang{i} \qtq M'_k\seteq\K_i(M_k)
$$
so that $\cM'=\st{M'_k}_{k\in K'}$. Note that $\tEm_i M'_k\simeq M_k$ for any $k\in K$.

Suppose that  $\bfa' = (a_k)_{k\in K'}$ and $\bfb' = (b_k)_{k\in K'}$
satisfy $\cM'(\bfa') \simeq \cM'(\bfb')$. Applying $\tEm_i$, we obtain
$\conv[{k\in K}]M_k{}^{\circ a_k}\simeq\conv[{k\in K}]M_k{}^{\circ b_k}$. Hence we obtain $a_k=b_k$ for any $k\in K$ by the assumption, which implies $a_0=b_0$. This shows that $\cM'$ is independent.

 Let $X\in\Cwv$ be a simple module commuting with $M'_k$  for any $k\in K'$.
By Proposition \ref{prop:stability} \eqref{prop:stability (i)} and \cite[Lemma 3.1]{KKOP18}, $\tEm_iX\in\Cwv[s_iw,v]$ commutes with every $\tEm_iM'_k\simeq M_k$ for $k\in K$.
Hence the assumption says that there exists $\bfa= (a_k)_{k\in K}$ such that
$\tEm_iX\simeq \mathop{\circ}\limits_{k\in K}M_k{}^{\circ a_k}$.
Since $X$ commutes with $\ang{i}$, there exists $a_0$ such that
$$X\simeq \ang{i^{a_0}}\conv\K_i(\tEm_iX)
\simeq\mathop{\circ}\limits_{k\in K'}M'_k{}^{\circ a_k},
$$
which completes the proof.
\QED

Let $\cM$ be a commuting family of \afr simple modules in $\Cwv$.
Let us consider the following condition on $\cM$:
\eq
\label{cond:frozen}
\hs{4ex}\parbox{76ex}{Any $M\in\cM$ satisfies
	exactly one of the following conditions:
	\bna\setlength{\itemindent}{3ex}
	\item \label{cond:frozen (a)}
    there exists a simple module in $\Cwv$ which does not commute with $M$,
	\item \label{cond:frozen (b)}
	$M$ is a \cfac of $\dM(w\La,v\La)$ for some $\La\in\pwtl$.
	\ee
}
\eneq

\Prop \label{prop: FK ex fr}
Let $i\in I$ and let $w,v \in \weyl$ such that $v<w$, $v<s_iv$, and $s_iw<w$.
Let $\cM$ be a  commuting family of \afr simple modules in $\Cwv[s_iw,v]$ 
satisfying \eqref{cond:frozen}.
\bnum
\item \label{it: Fim satisfying}
$\F_i(\cM)\subset \Cwv[w,s_iv]$ also satisfies \eqref{cond:frozen},
\item \label{it: Kim satisfying}
$\{\ang{i} \}\cup \K_i(\cM)\subset \Cwv[w,v]$
also satisfies \eqref{cond:frozen}.
\ee
\enprop
\Proof
~\eqref{it: Fim satisfying} It follows from Proposition \ref{prop:stability} \eqref{prop:stability (ii)} and the fact that $\F_i\bl\dM(s_iw\La,v\La)\br
\simeq \dM(w\La,s_iv\La)$ for any $\La \in \pwtl$.

\smallskip
\noindent
\eqref{it: Kim satisfying} It follows from Theorem \ref{th:ifro} 
and Lemma~\ref{lem:iaKi} together with the following: 
for an affreal simple $M$ and a simple $N$ in  $\Cwv[s_iw,v]$,
$\K_i(M)$ and $\K_i(N)$ do not strongly commute if $M$ and $N$ do 
not strongly commute by Lemma \ref{lem:Kmain} \eqref{it:Mainde}.
\QED

\vskip 2em

\section{Monoidal categorifications} \label{Sec: MC Cw}
 {\em From now until the end of the paper, we assume that $R$ is symmetric.}
Hence the Cartan matrix $\cmC$ is symmetric, and
we choose a bilinear form $( \cdot \, , \cdot )$ on the weight lattice $\wtl$
such that $ (\al_i, \al_i)=2$ for all $i\in I$.
Note that until now the theory of monoidal categorification by quiver Hecke algebras is available only in the symmetric case.

\subsection{Monoidal seeds}\label{subsec:monoidal}

In this subsection, we briefly review the monoidal categorification of $\Cw$ (\cite{KKKO18}).

We first recall the notion of monoidal categorification. Let $\cC$ be a full subcategory of $R\gmod$ which has $\one$ and is stable under taking convolution products, extensions, subquotients and grading shifts. 
\begin{definition} [{\cite{KKKO18}}] \label{Def: def MC} 
Let $\sfJ$ be an index set with a decomposition
$\sfJ= \sfJ^\ex \sqcup \sfJ^\fr$.
\bnum
\item
A triple $\Sigma = (\{M_i\}_{i\in \sfJ}, L, \tB )$ is a \emph{quantum monoidal seed} in $\cC$ if it satisfies the following:  
\bna
\item $\{M_i\}_{i\in \sfJ}$ is a commuting family of real simple modules in $\cC$, 
\item $L = (l_{ij})_{i,j\in \sfJ}$ is an integer-valued skew-symmetric matrix,
\item $\tB = (b_{ij})_{i\in \sfJ, j\in \sfJ^\ex}$ is an exchange matrix, 
\item $M_i \conv M_j \simeq q^{l_{ij}} M_j \conv M_i $ for any $i,j\in \sfJ$,
 i.e., $\La(M_j,M_i)=l_{ij}$, 
\item $(L,\tB)$ is compatible with $d=2$ (see Section \ref{Sec: QCA}),
\item $\sum_{i\in \sfJ} b_{i,k} \wt(M_i)=0$ for all $k\in \sfJ^\ex$.
\ee
\item  A quantum monoidal seed $\Sigma = ( \{M_i\}_{i\in \sfJ}, L, \tB)$ in $\cC$ admits a \emph{mutation} in direction $k\in \sfJ^\ex$ if 
\bna
\item  there  exists a real simple module $M'_k \in \cC$  
such that $\de(M_k, M'_k )=1$  and an exact sequence
\begin{align*} 
0 \to q^{a}  \conv[{b_{ik} >0}]  M_i^{\circ  b_{ik}} \to  M_k \conv M_k' \to q^{b} \conv[{b_{ik} <0} ] M_i^{\circ  (-b_{ik})} \to 0,
\end{align*}
where the exponents $a,b$ are determined by $\La$ (see \cite[Definition 6.2.3]{KKKO18} for more details on the exponents $a,b$),
\item
the triple $ \mu_k(\Sigma)=( \{M_i\}_{i\in \sfJ \setminus \{ k\}  } \cup \{M_k' \}, \mu_k(L), \mu_k(\tB))$ is a quantum monoidal seed in $\cC$. 
\ee
\item If a quantum monoidal seed $\Sigma$ admits successive mutations in all directions, then we say that $\Sigma$ is 
{\em completely admissible}.  
\item The category $\cC$ is called a \emph{monoidal categorification} of a quantum
cluster algebra $A$ over $\Z[q^{\pm1/2}]$ if
\bna
\item the Grothendieck ring $\Z[q^{\pm1/2}] \otimes_{\Z[q^{\pm1}]} K(\cC) $ is isomorphic to $A$,
\item there exists a completely admissible quantum monoidal seed 
$\Sigma = (\{M_i\}_{i\in \sfJ}, L, \tB )$ in $\cC$ such that 
$ ( \{ q^{-(\wt(M_i), \wt(M_i))/4} [M_i]\}_{i\in \sfJ}, L, \tB)$ is a quantum 
seed of $A$.
\ee
\item Let
$\Sigma = (\{M_i\}_{i\in \sfJ}, L, \tB )$ in $\cC$ be a quantum monoidal seed
in $\shc$.
A simple object $X$ of $\shc$ is called a {\em $\Sigma$-monomial}
if $X\simeq\conv[{k\in\sfJ}]M_k^{\circ n_k}$
for some $(n_k)_{k\in\sfJ}\in\Z_{\ge0}^\sfJ$. 
\ee
\end{definition}

By \cite[Lemma 3.2]{KK19}, the exchange matrix $\tB$ can be computed by using the matrix  $\La$ and $\st{\wt(M_s)}_{s\in\sfJ}$.  Hence  $ \{ M_s \}_{s\in \sfJ}$ along with the decomposition $\sfJ = \sfJ^\ex\sqcup\sfJ^\fr$ determines  the exchange matrix $\tB$. 
For a monoidal seed $\Sigma = (\st{M_s}_{s\in \sfJ}, -\Lambda, \tB)$, 
it  suffices to write
$$
 \seed = (M_s)_{s\in \sfJ} 
$$
to denote the seed $\Sigma$ instead of specifying the entire triple. From now on, we simply write $\seed = (M_s)_{s\in \sfJ}$ for the monoidal seed $\Sigma$ instead of writing the whole triple, and simply call it 
a seed  instead of  calling  a quantum monoidal seed.
We sometimes write $\seed_\ex \seteq  (M_s)_{s\in \sfJ^\ex} $ and 
$\seed_\fr \seteq  (M_s)_{s\in \sfJ^\fr} $.

The following proposition immediately follows from 
Proposition~\ref{prop:Laurentm}.
\Prop
Let $\seed=(M_k)_{k\in\sfJ}$ be a  seed of $\shc$,
and let $\seed'=(M'_k)_{k\in\sfJ}$ be the mutation of $\seed$
at $t\in\sfJ^\ex$.
Assume that $\seed$ is a Laurent family.
Then $\seed'$ is also a Laurent family.
\enprop

We now focus on the category $\Cw$. 
Let $w \in \weyl$ and choose a reduced expression of $w$
$$
\uw = (i_1, i_2, \ldots, i_r).
$$
For an index $1 \le k \le r$ and $j \in I$, we set
\begin{align*}
k_{\uw}(j)^+ &\seteq  \min \bl \{ t \in [1,r] \mid t\ge k,\; i_t=j \} \sqcup \{ +\infty \} \br, \\
k_{\uw}(j)^- &\seteq  \max \bl \{t \in [1,r] \mid t\le k,\; i_t=j \} \sqcup \{-\infty\} \br,
\end{align*}
and
\begin{align*}
k_{\uw}^+ &\seteq  \min \bl \{ t \in [1,r]\mid t> k,\; i_t= i_k \} \sqcup \{+\infty\} \br, \\
k_{\uw}^- &\seteq  \max \bl \{t \in [1,r] \mid t< k,\; i_t=i_k \} \sqcup \{-\infty\} \br. 
\end{align*}
We often drop $\uw$ in the above notations when no confusion arises.

An interval $ [a,b] \subseteq [1,r]$ is called an \emph{$i$-box of $\uw$} if $a\le b$ and $i_a=i_b$.   
From any interval $[a,b] \subseteq [1,r]$, we obtain two $i$-boxes
\[
[a,b \} \seteq  [a,b(i_a)^-] \quad \text{and} \quad \{a,b] \seteq  [a(i_b)^+,b].
\]
A chain $\frakC = ( [a_t,b_t])_{1 \le t \le r}$ of $i$-boxes of $\uw$
is called \emph{admissible} if, for each $t=1,\ldots,r$, the union
\[
 [\tilde{a}_t,\tilde{b}_t] \seteq  \bigcup_{1 \le j \le t} [a_j,b_j]
\]
is an interval of length $t$ and either
\[
[a_t,b_t] = [\tilde{a}_t,\tilde{b}_t\}
\quad\text{or}\quad
[a_t,b_t] = \{\tilde{a}_t,\tilde{b}_t].
\]
For any $i$-box $[a,b]$, the corresponding \emph{determinantial module} is defined by 
$$
\dM_{\uw}[a,b] \seteq  \hd ( \dS_{b} \conv \dS_{ b^- } \conv \cdots \conv  \dS_{ a^+} \conv  \dS_{ a}),
$$
where $\dS_k \seteq  \F_{i_1} \F_{i_2} \ldots \F_{i_{k-1}} (\ang{i_k}) $ is the $k$-th \emph{cuspidal module} for $k\in [1,r]$. We simply write $\dM[a,b]$ for $\dM_{\uw}[a,b]$ if no confusion arises.

For each admissible chain $\frakC = ( \frakc_k )_{k\in [1,r]}$ of $i$-boxes, we set 
$$
\wseed_\frakC \seteq  ( \dM(\frakc_k) )_{k\in \sfJ}\qt{with $\sfJ=[1,r]$.}
$$
where $\sfJ^\fr=\st{k\in[1,r]\mid\text{
$\frakc_k=[1(i)^+,r(i)^-]$ for some $i\in I$}}$.
Define
$$
\La_\frakC \seteq  ( \La(\dM(\frakc_i), \dM(\frakc_j) ) )_{i,j\in \sfJ}
$$
and $\tB_\frakC$ to be  the exchange matrix determined by $\frakC$ in \cite[Section 3]{KK24}. 
It was shown in \cite[Section 11.2]{KKKO18} (see also \cite{KK24}) 
that the triple $\Sigma_\frakC = ( \wseed_\frakC, -\Lambda_\frakC, \tB_\frakC)$  is a quantum monoidal seed of $\Cw$, and $\Cw$ provides a \emph{monoidal categorification} of $A_q(\n(w))$.

This tells us that every cluster monomial corresponds to a real simple module under the monoidal categorification. Such a simple module is called a \emph{cluster monomial module}. When the cluster monomial is a cluster variable (resp.\ frozen variable), we call it a \emph{cluster variable module} (resp.\ \emph{frozen variable module}). 

\begin{remark} \label{Rmk: B from M} 
Even if we choose different reduced expressions and different admissible chains, the corresponding seeds are connected by mutations and permutations (\cite{KKOP24A, KK24}).
Hence the mutation class of $\Sigma_\frakC$ does not depend on the choices of reduced expressions and admissible chains.  
We call it the {\em canonical mutation class of $\Cw$}.
\enrem

\medskip
The following monoidal seed arising from the admissible chain $\frakC = ( \{1,k] )_{k\in [1,r]}$ is called the \emph{GLS seed} of $\Cw$: 
\begin{align} \label{Eq: GLS seed}
 \wseed_{\uw,\GLS} \seteq  (\dM_{\uw}\{1,k])_{k\in[1,r]}
\end{align}
(see \cite{GLS11, GLS13} and see also \cite[Section 11]{KKKO18}).

  \vskip 2em 
  
\section{Construction of monoidal seeds in $\Cw$} \label{Sec: seed for Cw} 

In this section, we give a method to construct a seed of $\Cw$,
which may not be obtained by the method of admissible chains of
$i$-boxes.

\subsection{Operators $\K_i$ and $\F_i$} \

By \cite[Theorem 4.10]{KK19}, we have the following lemma. 

\Lemma \label{Lem: criterion for cv} 
Let $\wseed=(M_k)_{k\in\sfJ}$ be a seed of $\Cw$ 
in the canonical mutation class, and let $t\in \sfJ^\ex$.
\bnum 
\item \label{it: seedprime}
Let $\wseed'=(M'_k)_{k\in\sfJ}$ be a seed of $\Cw$  \cmu.
If  $M_k\simeq M'_k$ for $k\in \sfJ \setminus \{ t\}$ and $M_t\not\simeq M'_t$, then $\wseed'=\mu_t(\wseed)$.
\item \label{Lem: criterion for cv (ii)}
Suppose that  a simple module $X \in \Cw$ satisfies: 
\bna
\item $X$ commutes $M_k$ for any $k\in \sfJ\setminus \st{t}$, 
\item $\de(X, M_t)=1$,   
\item $X \hconv M_t$ and  $M_t\hconv X$ \ncf. 
\ee
Then we have $X \simeq \mu_t(M_t)$.  

\ee
\enlemma
\Proof
\eqref{it: seedprime}
Set $\wseed''=\mu_t(\wseed)=(M''_k)_{k\in \sfJ}$.
Since $M_t'$ commutes with $M_k$ for any $k\in\sfJ \setminus\{t\}$,
\cite[Theorem 4.10]{KK19} implies that $M'_t$ 
is a $\wseed$-monomial or a $\wseed''$-monomial.
Since $[M'_t]$ is prime, $M'_t=M''_t$, 
which implies that $ \wseed' = \wseed''$.

\mnoi
\eqref{Lem: criterion for cv (ii)} Let $\wseed'=\mu_t(\wseed)$ and let $M_t' = \mu_t(M_t)$. From the assumption, we have the following two exact sequences:
\begin{equation} \label{Eq: two exs}
\begin{aligned} 
0 \longrightarrow M'_t \hconv M_t \longrightarrow  &M_t \conv M'_t  \longrightarrow M_t \hconv M'_t \longrightarrow 0,	\\
0 \longrightarrow X \hconv M_t \longrightarrow  & M_t \conv X  \longrightarrow M_t \hconv X \longrightarrow 0.	
\end{aligned}
\end{equation}	
Combining \cite[Theorem 4.10]{KK19} with (a), 
we know that $X$ is a $\wseed'$-monomial  or a $\wseed$-monomial.
Then (b) implies that
 $X$ is a $\wseed'$-monomial,  i.e.,
$X\simeq M_t'{}^{\circ n}\conv Y$  
for some $n\in\Z_{\ge0}$ and 
a $(\wseed\setminus\st{M_t})$-monomial $Y$.
By (b), we have $n=1$.
Hence
$X\hconv M_t\simeq Y\conv(M'_t\hconv M_t)$ and 
$M_t\hconv X\simeq Y\conv(M_t\hconv M'_t)$.
Then (c) implies that $Y\simeq \one$.
\QED

\Prop\label{prop:Fseed}
Let $i\in I$ and $w\in\weyl$ such that $s_iw<w$.
For a seed $\wseed$ of $\Cw[s_iw]$
\cmu, 
we define 
\begin{align*}
\twseed\seteq  \st{\dM(w\La_i,\La_i)}\cup\F_i(\wseed).
\end{align*}
Then we have the following.
\bnum
\item \label{prop:Fseed (i)}
$\twseed$ is a seed of $\Cw$ \cmu.

\item \label{prop:Fseed (ii)}
Moreover, we have 
\eqn
\twseed_\ex&&= \bc \st{\dM(w\La_i,s_i\La_i)}\cup\F_i(\wseed_\ex)&\text{if $s_iw>s_i$,}\\
\F_i(\wseed_\ex)&\text{otherwise,}\ec  \\
   \twseed_\fr 
  &&= 
\bc\st{\dM(w\La_i,\La_i)}\cup \bl \F_i(\wseed_\fr) \setminus \st{\dM(w\La_i, s_i\La_i)} \br&\text{if $s_iw>s_i$,}\\
   \F_i(\wseed_\fr) \cup \st{\dM(w\La_i, \La_i)=\ang{i}}&\text{otherwise.}     \ec   
  \eneqn

\item \label{prop:Fseed (iii)}
If $\wseed'$ is a mutation of
$\wseed$, 
then $\twseed' $ is also a mutation of $\twseed$.
\ee
\enprop
\Proof
Since it is obvious when $\ell(w)=1$, we assume that $\ell(w)>1$.

Let $r=\ell(w)$ and take a reduced sequence
$  \uw  =(i_1,i_2, \ldots,i_r)$ of $w$ with $i_1=i$,
and 
$\underline{s_iw}=(i_2,\ldots, i_r)$ of $s_iw$. 
Take a seed $\wseed_\circ=\st{\dM_{\underline{s_iw}}[k,r-1\}}_{k\in[1,r-1]}$ of $\Cw[s_iw]$
which is \cmu.
Then 
$$
\twseed_\circ = \st{\dM(w\La_i,\La_i}\cup\F_i(\wseed_\circ)
=\st{\dM_{\uw }[k,r\}}_{k\in[1,r]}
$$
is a seed of $\Cw$. Note that $ \twseed_\circ$ is \cmu.

Since $\F_i$ is a monoidal functor from $\Cw[s_iw]$ to $\Cw$, taking $\twseed$ is compatible with mutations. 
Since $\wseed$ is connected to $\wseed_\circ$ via mutations, we have~\eqref{prop:Fseed (i)} and~\eqref{prop:Fseed (iii)} . 

If $s_iw \not> s_i$, then $\dM(w\La_i,s_i\La_i)$ is trivial. Suppose that $s_iw > s_i$. Then  $\dM(s_iw\La_i,\La_i)$ is frozen in $\Cw[s_iw]$ but $\dM(w\La_i,s_i\La_i) = \F_i(\dM(s_iw\La_i,\La_i))$ (\cite[Lemma 10.6]{KKOP24D}) is exchangeable in $\Cw$. Thus we have~\eqref{prop:Fseed (ii)}.
\QED

\Prop \label{prop:Kseed}
Let $i\in I$ and $w\in \weyl$ such that $s_iw<w$.
For 
a seed $\wseed$ of $\Cw[s_iw]$ \cmu,
we define 
\begin{align*}
	\twseed\seteq\st{\ang{i}}\cup\K_i(\wseed).
\end{align*}
Then we have the following.
\bnum
\item \label{prop:Kseed (i)}
$\twseed$ is a seed of $\Cw$ \cmu.

\item\label{prop:Kseed (ii)}
Moreover, we have 
\eqn
\twseed_\ex&&=\bc\st{\ang{i}}\cup\K_i(\wseed_\ex)&\text{if $s_iw>s_i$,}\\
\K_i(\wseed_\ex)&\text{otherwise,}
\ec\\
\twseed_\fr&&=\bc\K_i(\wseed_\fr)&\text{if $s_iw>s_i$,}\\
\st{\ang{i}}\cup\K_i(\wseed_\fr)&\text{otherwise.}
\ec
\eneqn

\item \label{prop:Kseed (iii)}
If $\wseed'$ is a mutation of
$\wseed$, 
then $\twseed'$ is also a mutation of $\twseed$.
\ee
\enprop
\begin{proof}
We may assume that $\ell(w) > 1$ since the statement is trivial otherwise. 

Let $r=\ell(w)$ and take a reduced sequence
$\uw=(i_1,i_2, \ldots,i_r)$ of $w$ with $i_1=i$,
and $\underline{s_i w} =(i_2,\ldots, i_r)$ of $s_iw$.
Taking the seed 
\begin{align} \label{Eq: initial T0}
\wseed_\circ = \wseed_{\underline{s_i w} ,\GLS} = (\dM_{\underline{s_i w}}\{1,k])_{k\in[1,r-1]}
\end{align}
of $\Cw[s_iw]$,
we have  
\begin{align} \label{Eq: initial tT0}
\twseed_\circ  = \st{\ang{i}}\cup\K_i(\wseed))= (\dM_{\uw }\{1,k])_{k\in[1,r]} = \wseed_{\uw ,\GLS}
\end{align}
which is a seed of $\Cw$ \cmu. 

We now consider a seed  $\wseed$ of $\Cw[s_iw]$, and $\wseed'$ is a mutation of $\wseed$. Suppose that $\twseed =  (\ang{i},\K_i(\wseed))$ is a seed of $\Cw$. 
Let $M'\in\wseed'$ be the mutation of $M\in\wseed$  and set 
$$
X\seteq M\hconv M' \qtq Y\seteq M' \hconv M.
$$
Since $M'$ is a mutation of $M$,  $\de(M,M')=1$, $X$ and $Y$ are \monomials[\wseed], and $X$ and $Y$ do not have
a common convolution factor.  Moreover $M'$ commutes with any member of $\wseed\setminus\st{M}$.

Since the value $\ep_i $ of any module in $\Cw[s_iw]$ is zero, we have 
\bna
\item if   $\K_i(U) \simeq \K_i(V)$ for some simple modules $U,V \in \Cw[s_iw]$, then $U\simeq V$, 
\item $\de\bl\K_i(M),\K_i(M')\br=1$ by Lemma~\ref{lem:Kmain}\;\eqref{it:Mainde},
\item \label{Eq: nm (b)} $\K_i(M)\hconv \K_i(M')\simeq \ang{i^m}\conv\K_i(X)$ and
$\K_i(M')\hconv \K_i(M)\simeq \ang{i^n}\conv\K_i(Y)$ for some $m,n\in \Z_{\ge0}$ by  Lemma~\ref{lem:Kmain}\;\eqref{it:Mainhconv},
\item 
$\K_i(X)$ and $\K_i(Y)$ are \monomials[\K_i(\wseed)] and they \ncf.
\item 
$\K_i(M')$ commutes with
any member of $\K_i(\wseed)\setminus\st{\K_i(M)}$
by Lemma~\ref{lem:Kmain}\;\eqref{it:Mainde}.
\ee
Since $\K_{i}(M')$ commutes with $\ang{i}$ and 
has no $\ang{i}$ as a factor by Lemma \ref{Lem: basics for Ki} \eqref{Lem: basics for Ki (ii)}, either of $m$ and $n$ in \eqref{Eq: nm (b)} is zero. 
Lemma \ref{Lem: criterion for cv} \eqref{Lem: criterion for cv (ii)} says that  $\K_i(M')$ is a mutation of $\K_i(M)$, i.e., $ \twseed'$ is a mutation of $\twseed$. 
This implies~\eqref{prop:Kseed (i)} and~\eqref{prop:Kseed (iii)} by \eqref{Eq: initial T0} and \eqref{Eq: initial tT0}.

If $s_iw \not> s_i$, then $\ang{i}$ is  a central object in $\Cw$ (up to a multiple of $q$), which says that $\ang{i}$ is frozen.  If $s_iw > s_i$, then $\ang{i}$ is not central, i.e., not frozen. 
 Hence we have~\eqref{prop:Kseed (ii)}	
\end{proof}

\Prop\label{prop:wmut}
Let $i\in I$ and $w\in\weyl$ such that $s_i<s_iw<w$.
Let $\wseed$ be a seed of $\Cw[s_iw]$ \cmu, and set 
$\twseed\seteq\st{\ang{i}}\cup\K_i(\wseed)$, a seed of $\Cw$.
\bnum
\item \label{it: wmut (i)}
Let $Z\in\Cw$ be the mutation of $\ang{i} \in\twseed$. Then we have
\bna
\item $\de_i(Z)=1$,
\item $Y\seteq\tEm_iZ\in\Cw[s_iw]$ is a \monomial[\wseed],
\item $X\seteq\tFs_iY$ is a \monomial[\wseed],
\item there is an exact sequence
$$0\To \K_i(X)\To \ang{i}\conv Z\To \K_i(Y)\To0.$$
\ee
\item \label{it: wmut (ii)}
Conversely, if $X$ and $Y$ are \monomials[\wseed] such that
\bna\setlength{\itemindent}{4ex}
\item $X\simeq\tFs_iY$,
\item $X$ and $Y$ have no common \cfac,
\ee
then $\tF_i^{\de_i(X)}Y\simeq\tEs_i\K_i(X)$ is a mutation of
$\ang{i}
\in\twseed$.
\ee
\enprop
\Proof 
Note that $\twseed$ is a seed of $\Cw$ \cmu by Proposition \ref{prop:Kseed}.

\smallskip
\noindent
\eqref{it: wmut (i)}\ 
Since $Z$ is a mutation of $\ang{i}$, we have $\de_i(Z)=1$
and there exist \monomials[\wseed] $X$ and $Y$ and an exact sequence 
$$0\To \K_i(X)\To \ang{i}\conv Z\To \K_i(Y)\To0.$$
Note that $\eps_i(X)=\eps_i(Y)=0$ since $X,Y\in\Cw[s_iw]$.
Hence we have $Y\simeq\tEm_i Z$ and $X\simeq\tEm_i\tFs_i Z$.
Since $\de_i(Z)>0$, we have
$\tEm_i\tFs_i Z\simeq\tFs_i \tEm_iZ\simeq \tFs_iY$.

\mnoi
\eqref{it: wmut (ii)}\ Set $Z=\tF_i^{\de_i(X)}Y$.
If $\de_i(Y)=0$, then $X\simeq Y \conv \ang{i}$, which contradicts (b). 
It follows that  $\de_i(Y)=\de_i(X)+1$  by \cite[Proposition 2.16 (i)]{KKOP23A}. Thus we have $\de_i(Z)=1$,  
$\tF_i(Z)\simeq\K_i(Y)$ and $\tFs_i(Z)\simeq\K_i(X)$.
Hence we have
an exact sequence $$0\To \K_i(X)\To \ang{i}\conv Z\To \K_i(Y)\To0.$$
Applying Lemma~\ref{lem:MNa} (i) to the setting $M=S$, $N=Y$ and $a=1$ for any $S\in\wseed$, we obtain that $Z$ commutes with $\K_i(S)$. 
 By Lemma~\ref{lem;ncf},  $\K_i(X)$ and $\K_i(Y)$ \ncf. 
Then the assertion follows from Lemma \ref{Lem: criterion for cv} \eqref{Lem: criterion for cv (ii)}.
\QED

\subsection{Seed construction for $\Cw$} \label{subsec:exp}

In this subsection, we present a new construction of monoidal seeds of $\Cw$.

Let $w\in\weyl$ and fix a reduced sequence of $w$
$$
\uw=(i_1,i_2, \ldots,i_r)
$$ 
throughout this subsection.
Recall the notations $k_{\uw}^+$, $k_{\uw}^-$ given in Section \ref{Sec: MC Cw}. 
We set $w_{\ge k} \seteq s_{i_k} s_{i_{k+1}} \cdots s_{i_r}$ for 
$k\in[1,r]$.

Let $\kfc=(c_1,\ldots, c_r)\in \st{\kt,\ft}^r$. 
The sequence $\kfc$ is called a \emph{KF sequence} for $\Cw$.
We define
\begin{align} \label{Eq: TM}
\wseed_{\uw,\kfc} =  (M_k)_{k\in[1,r]},
\end{align} 
where $M_k$ is given by  
\begin{align} \label{Eq: Mk G}
M_k \seteq 
\bc \G_1\cdots \G_{k-1}(\ang{i_k})&\text{if $c_k = \kt$,}\\
\G_1\cdots \G_{k-1}\bl\dM(w_{\ge k}\La_{i_k},\La_{i_k})\br
&\text{if $c_k = \ft$,}
\ec
\end{align}
and
\eqn
\G_t \seteq \bc \K_{i_t}&\text{if $c_t = \kt$,}\\
\F_{i_t}  &\text{if $c_t = \ft$.}\ec
\eneqn

\Rem\label{rem:right}
Let $\kfc'=(c'_s)_{s\in[1,r]}$ be another KF sequence which satisfies
$c'_s=c_s$ for any $s\in[1,r]$ such that $(i_s)^+\le r$.
Then  $\wseed_{\uw,\kfc'}=\wseed_{\uw,\kfc}$.
\enrem

\Lemma \label{Lem: frozen in [1,r]}
The following conditions on $s\in[1,r]$ are equivalent:
\bna
\item $s$ is frozen, 
\item $s=\min\st{k\in\Ft\mid i_k=i_s}$
where $\Ft=\st{k\in[1,r]\mid \sfc_k=\ft}\cup\st{k\in[1,r]\mid k^+=\infty}$.
\item $M_s\simeq\dM(w\La_{i_s},\La_{i_s})$.
\ee
\enlemma

\begin{proof}
By Remark~\ref{rem:right},
we may assume that $\Ft=\st{k\in[1,r]\mid\sfc_k=\ft}$.
Then the result follows from
$\K_i\bl\dM(w'\La_j,\La_j)\br\simeq\dM(s_iw'\La_j,\La_j)$
if $w'<s_iw'$.
\end{proof}

We  set 
\begin{align} \label{Eq: index for Cw}
\sfJ \seteq  [1,r], \qquad \sfJ^\fr \seteq  \{ k \in \sfJ \mid \text{$k$ is frozen} \}, \qquad \sfJ^\ex \seteq  \sfJ \setminus \sfJ^\fr 
\end{align}
where the frozen condition is given in Lemma \ref{Lem: frozen in [1,r]}.
Thanks to Propositions \ref{prop:Fseed} and  \ref{prop:Kseed}, we have the following.
\begin{prop} \label{prop: Twc}
The family $\wseed_{\uw,\kfc} $ with $\sfJ = \sfJ^\ex \sqcup \sfJ^\fr $ is 
a seed of $\Cw$ \cmu.
\end{prop}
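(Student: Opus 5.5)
We argue by induction on $r=\ell(w)$, peeling off the first letter $i=i_1$ of $\uw$. Put $\uw'=(i_2,\ldots,i_r)$, a reduced expression of $s_iw$, and $\kfc'=(c_2,\ldots,c_r)$. Then $w'_{\ge k}=w_{\ge k+1}$, so the definition \eqref{Eq: Mk G} unwinds as
\[
M_k=\G_1\bl M'_{k-1}\br\quad(k\in[2,r]),\qquad \wseed_{\uw',\kfc'}=(M'_k)_{k\in[1,r-1]} .
\]
For the first entry we have $M_1=\ang{i}$ if $c_1=\kt$, and $M_1=\dM(w\La_{i},\La_{i})$ if $c_1=\ft$. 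Consequently
\[
\wseed_{\uw,\kfc}=\begin{cases}\st{\ang{i}}\cup\K_i(\wseed_{\uw',\kfc'})&\text{if }c_1=\kt,\\[.5ex] \st{\dM(w\La_i,\La_i)}\cup\F_i(\wseed_{\uw',\kfc'})&\text{if }c_1=\ft.\end{cases}
\]

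By the induction hypothesis, $\wseed_{\uw',\kfc'}$ is a seed of $\Cw[s_iw]$ in the canonical mutation class, with the frozen/exchangeable decomposition of Lemma~\ref{Lem: frozen in [1,r]} applied to $(\uw',\kfc')$. The base case $r=0$ (empty seed) or $r=1$ (the seed $\st{\ang{i}}$ with frozen index, in either case) is trivial. Proposition~\ref{prop:Kseed}\;\eqref{prop:Kseed (i)} in the case $c_1=\kt$, or Proposition~\ref{prop:Fseed}\;\eqref{prop:Fseed (i)} in the case $c_1=\ft$, then shows that $\wseed_{\uw,\kfc}$ is a seed of $\Cw$ in the canonical mutation class. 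The underlying family is therefore fine.

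The remaining point, and the main obstacle, is to check that the frozen set produced by these propositions is exactly $\sfJ^\fr$ of \eqref{Eq: index for Cw}. Equivalently, we must check that the inductive bookkeeping of Propositions~\ref{prop:Fseed}\;\eqref{prop:Fseed (ii)} and \ref{prop:Kseed}\;\eqref{prop:Kseed (ii)} agrees with criterion (c) of Lemma~\ref{Lem: frozen in [1,r]}, namely $M_s\simeq\dM(w\La_{i_s},\La_{i_s})$. We treat the two cases in turn.

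\textbf{Case $c_1=\ft$.} By the induction hypothesis, the frozen members of $\wseed_{\uw',\kfc'}$ are the $\dM(s_iw\La_j,\La_j)$. We apply $\F_i$, using $\F_i\bl\dM(s_iw\La_j,\La_j)\br\simeq\dM(w\La_j,s_i\La_j)$. For $j\ne i$ this equals $\dM(w\La_j,\La_j)$, so these indices stay frozen. For $j=i$ it becomes $\dM(w\La_i,s_i\La_i)$, which is exchangeable when $s_iw>s_i$. Its role as frozen is taken over by index $1$, which holds $\dM(w\La_i,\La_i)$. This matches Proposition~\ref{prop:Fseed}\;\eqref{prop:Fseed (ii)}.

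\textbf{Case $c_1=\kt$.} We use $\K_i\bl\dM(s_iw\La_j,\La_j)\br\simeq\dM(w\La_j,\La_j)$ for $j\ne i$ (as in the proof of Lemma~\ref{Lem: frozen in [1,r]}). We also need that if $s_iw>s_i$, then no $j=i$ frozen module survives, since $\ang{i}$ itself is exchangeable there. Otherwise $\ang{i}=\dM(w\La_i,\La_i)$ is frozen. Again this matches Proposition~\ref{prop:Kseed}\;\eqref{prop:Kseed (ii)}.

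In both cases the frozen indices are those with $M_s\simeq\dM(w\La_{i_s},\La_{i_s})$. By Lemma~\ref{Lem: frozen in [1,r]} this is $\sfJ^\fr$, and the induction closes.
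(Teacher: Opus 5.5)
Your strategy is the one the paper uses to obtain this proposition. You peel off $i=i_1$, note $M_k=\G_1(M'_{k-1})$ with $M_1\in\st{\ang{i},\dM(w\La_i,\La_i)}$, and apply Propositions~\ref{prop:Kseed} and \ref{prop:Fseed}. Your bookkeeping in the case $c_1=\ft$ is correct.

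The case $c_1=\kt$ with $s_iw>s_i$ is handled incorrectly. You restrict $\K_i\bl\dM(s_iw\La_j,\La_j)\br\simeq\dM(w\La_j,\La_j)$ to $j\neq i$, and you assert that no color-$i$ frozen module survives. It does survive. Put $N\seteq\dM(s_iw\La_i,\La_i)$.
\begin{itemize}
\item We have $\ep_i(N)=0$.
\item We have $\ep^*_i(N)=1$, because $s_i\le s_iw$ (Lemma~\ref{lem:bruD} with $v=\id$).
\item Hence $\de_i(N)=\ang{h_i,s_iw\La_i}\ge0$.
\item So $\K_i(N)=\tF_i^{\ang{h_i,s_iw\La_i}}N\simeq\dM(w\La_i,\La_i)$, by the standard identity $\tF_i^{\,n}\dM(\mu,\La)\simeq\dM(s_i\mu,\La)$ for $n=\ang{h_i,\mu}\ge0$.
\end{itemize}
This is what Proposition~\ref{prop:Kseed}\;\eqref{prop:Kseed (ii)} actually says. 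When $s_iw>s_i$, $\twseed_\fr=\K_i(\wseed_\fr)$, so every old frozen index stays frozen, including the one of color $i$. The only change is that the new index $1$, carrying $\ang{i}$, is exchangeable.

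Under your account, no index of color $i$ would be frozen in $\wseed_{\uw,\kfc}$. Yet the index carrying $\K_i(N)\simeq\dM(w\La_i,\La_i)$ satisfies criterion (c) of Lemma~\ref{Lem: frozen in [1,r]}. So your concluding sentence does not follow from what you wrote, and the claimed agreement with Proposition~\ref{prop:Kseed}\;\eqref{prop:Kseed (ii)} is false.

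The fix is to use the isomorphism for all $j$. It holds for $j=i$ whenever $\dM(s_iw\La_i,\La_i)\not\simeq\one$, which is exactly the case $s_iw>s_i$.

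You should also justify the converse inclusion: no exchangeable index satisfies (c). Each $\dM(w\La_j,\La_j)$ is central in $\Cw$, whereas an exchangeable member fails to commute with its own mutation. Alternatively, seed members are pairwise non-isomorphic, and each $\dM(w\La_j,\La_j)$ already sits at a frozen index.

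With these two corrections the induction closes as you intend.
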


\begin{example} \label{Ex: Cw}
Let $R$ be a symmetric quiver Hecke algebra of finite type $A_3$ and let $w := w_0 $ be the longest element in the Weyl group $\weyl$. Let us take
$$
\uw = (i_1, \ldots, i_6) = (1,2,1,3,2,1) \qtq \kfc= (c_1, \ldots, c_6) = (\kt, \ft,   \kt, \ft, \kt, \kt).
$$
Using \eqref{Eq: F_i in crystal}, Definition \ref{Def: Ki and K*i} and \cite[Lemma 9.1.5]{KKKO18} (see also \cite[Lemma 1.7, Proposition 4.1]{KKOP18}), 
the corresponding seed $\wseed_{\uw,\kfc} =  (M_k)_{k\in[1,r]}$ can be computed as 
\bnum
\item $M_1 = \ang{1}$,
\item $M_2 =  \K_1(\dM(w_{\ge 2}\La_{2},\La_{2})) = \K_1(\ang{2,1,3,2}) = \ang{2,1,3,2}$, 
\item $M_3 = \K_1\F_{2} (\ang{1}) = \K_1 (\ang{2,1}) = \ang{2,1}  $, 
\item $M_4 = \K_1\F_{2} \K_1 ( \dM(w_{\ge 4}\La_{3},\La_{3}) ) = \K_1\F_{2} \K_1 ( \ang{3} ) = \K_1(\ang{2,3}) = \ang{1,2,3}$, 

\item $M_5 = \K_1\F_{2} \K_1 \F_3  (\ang{2})  = \K_1\F_{2}  (\ang{1,3,2})   = \ang{2,1,3}$, 
\item $M_6 = \K_1\F_{2} \K_1 \F_3 \K_2  (\ang{1})  = \K_1\F_{2} \K_1  (\ang{3,2,1}) = \K_1  (\ang{3,2,1}) = \ang{3,2,1}$,
\ee
where $ \ang{2,1,3,2} = \ang{2,1} \hconv \ang{3,2}$ and $\ang{2,1,3} = \ang{2,1} \hconv \ang{3}$ (see Section \ref{Sec: QHA}).
Note that $M_1, M_3, M_4, M_6$ are 1-dimensional and $M_2, M_5$ are 2-dimensional. The exchangeable and frozen indices are given as 
$$
\sfJ^\ex = \{ 1,3,5 \} \qtq \sfJ^\fr = \{ 2,4,6 \}.
$$
The seed $\wseed_{\uw,\kfc}$ can be depicted as follows: 
$$
\xymatrix{
M_3  \ar[rr]  && M_5 \ar[rr] \ar[rrd]  && M_1 \ar[lld] \\ 
\text{\fbox{$M_6$}} \ar[u] && \text{\fbox{$M_4$}} \ar[u]  && \text{\fbox{$M_2$}}
}
$$
where the boxed ones are frozen. We remark that this seed $\wseed_{\uw,\kfc}$ does not coincide with  any seed $\wseed_\frakC$ arising from an admissible chain $\frakC $ of $i$-boxes with respect to the reduced expression $\uw$.
\end{example}

\vskip 2em 

\section{Construction of monoidal seeds in  $\Cwv$}  \label{Sec: Swv for Cwv}

In this section, we introduce a new construction of monoidal seeds 
of  $\Cwv$ using $\K_i$ and $\F_i$. 

\subsection{Pair of seeds} \label{Sec: pair}

Let $w,v\in\weyl$ such that $v\le w$. 

 We first prepare the following two lemmas in order to prove
the main theorem of this subsection. 
\begin{lem} \label{lem:Mw Mwv}
Let $\La \in \wlP_+$. If the determinantial module $\dM(w\La, \La)$ is contained in $\Cwv$, then we have $v\La=\La$.
\end{lem}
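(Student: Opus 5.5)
Since $\dM(w\La,\La)$ is a convolution of its $\La_i$-components (for $\La=\sum_i m_i\La_i$ we have $\dM(w\La,\La)\simeq\conv[i]\dM(w\La_i,\La_i)^{\circ m_i}$ up to grading shift), \eqref{Eq:cfactorst} reduces the statement to the fundamental case. If $\dM(w\La,\La)\in\Cwv$, each factor $\dM(w\La_i,\La_i)$ with $m_i>0$ lies in $\Cwv[*,v]$. It then suffices to show that $\dM(w\La_i,\La_i)\in\Cwv[*,v]$ forces $v\La_i=\La_i$; summing over $i$ gives $v\La=\La$.

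For the fundamental case I argue by contradiction through the right crystal data. Suppose $v\La_i\ne\La_i$, i.e.\ $v$ does not lie in the stabilizer parabolic subgroup $W_{I\setminus\{i\}}$. Then every reduced expression of $v$ contains $s_i$. Choose a reduced expression and let $s_{j}$ be its last letter, so $vs_j<v$ and $v\al_j\in-\prt$, i.e.\ $\al_j\notin \prt\cap v\prt$. Membership $N\in\Cwv[*,v]$ requires $\sgW(N)\subset\prtl\cap v\prtl$, which for simple $N$ with $\ep^*_j(N)>0$ would give $\al_j\in\sgW(N)$. However, $\al_j$ may fail to lie in $\prtl\cap v\prtl$ without being excluded by this inequality alone, since the cone is generated by positive roots in $v\prt$. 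So the cleaner route is to find \emph{some} element of $\sgW(\dM(w\La_i,\La_i))$ outside $v\prtl$.

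I would use the known description of the right weights of a determinantial module: $\dM(w\La_i,\La_i)$ corresponds to the function $a\mapsto(au_{w\La_i},u_{\La_i})$. Since $\La_i$ is the highest weight and $u_{w\La_i}$ lies in $U^-u_{\La_i}$, the dual element $e(*,\gamma)$-part is nonzero exactly when $\La_i-\gamma$ is a weight occurring "just below the top". In particular, $\gamma=\La_i-\mu$ for weights $\mu$ of $V(\La_i)$ between $w\La_i$ and $\La_i$, and $\gamma=\La_i-w\La_i$ (the full weight) is always in $\sgW$. Now I test $\gamma_0\seteq\La_i-w\La_i\in\sgW(\dM(w\La_i,\La_i))$ against $v\prtl$. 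Equivalently, I need $v^{-1}(\La_i-w\La_i)\in\prtl$, i.e.\ $v^{-1}\La_i-v^{-1}w\La_i\ge0$. Better is to use the smaller element $\gamma_1=\La_i-v\La_i$. By Lemma \ref{lem:bru}, $v\le w$ gives $u_{w\La_i}\in U^-u_{v\La_i}$, so $v\La_i$ lies between, and the matrix coefficient $(e_\gamma u_{w\La_i},u_{\La_i})$ with a path through $u_{v\La_i}$ is nonzero. Hence $\gamma_1=\La_i-v\La_i\in\sgW$. But $v^{-1}\gamma_1=v^{-1}\La_i-\La_i\in-\prtl\setminus\{0\}$ when $v\La_i\ne\La_i$. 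This contradicts $\gamma_1\in v\prtl$.

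The main obstacle is rigorously justifying $\La_i-v\La_i\in\sgW(\dM(w\La_i,\La_i))$. The plan here is to realize $\E^{*}$-restriction as the factorization $\dM(w\La_i,\La_i)\twoheadrightarrow \dM(w\La_i,v\La_i)\conv\dM(v\La_i,\La_i)$-type relation, i.e.\ $\dM(w\La,\La)$ is a quotient/head of $\dM(w\La,v\La)\conv\dM(v\La,\La)$ (standard from \cite{KKOP18} since $v\le w$). Then $\sgW$ of the product contains $\sgW(\dM(v\La,\La))\ni\La-v\La$, and by the head argument the relevant $e(*,\gamma)$ component survives. If this is awkward, I would fall back on the $q=1$ matrix-coefficient computation above, using nonvanishing of $(u_{w\La},\,e\cdots u_{v\La})$ from Lemma \ref{lem:bru}.
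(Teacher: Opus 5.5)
Your proof is correct, but its main line is not the paper's route; your ``fallback'' is. The paper quotes the factorization $\dM(w\La,v\La)\hconv\dM(v\La,\La)\simeq\dM(w\La,\La)$ from \cite[Proposition 4.8]{KKOP18} and concludes $\dM(v\La,\La)\simeq\one$. Spelled out, this uses the following facts:
\begin{itemize}
\item for simple $X,Y$, the module $X\otimes Y$ embeds in the $e(-\wt X,-\wt Y)$-part of $X\hconv Y$, so $\sgW(\dM(v\La,\La))\subset\sgW(\dM(w\La,\La))\subset v\prtl$;
\item $\dM(v\La,\La)\in\Cw[v]$ puts its total weight in $v\nrtl$;
\end{itemize}
hence $\dM(v\La,\La)$ is trivial.

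Your primary route instead computes $\sgW$ through matrix coefficients. The criterion is that $\gamma\in\sgW(\dM(w\La,\La))$ iff $(ab\,u_{w\La},u_\La)\ne0$ for some $a\in U^+_\gamma$ and $b\in U^+$. Here the factor next to $u_\La$ carries the right restriction, which matches the convention in the proof of Lemma~\ref{lem:bruD}, where $\E_i$ inserts $e_i$ next to $u_{w\La}$. Take $b$ with $b\,u_{w\La}=u_{v\La}$ (Lemma~\ref{lem:bru}, since $v\le w$) and $a$ with $a\,u_{v\La}=u_\La$. This gives $\La-v\La\in\sgW$, and $v^{-1}\La\le\La$ then yields the contradiction.

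What each approach buys:
\begin{itemize}
\item Your route is independent of the factorization theorem and the head argument. The price is the identification of restrictions with the coproduct on $U^+(\g)^*$. This is best used at $q=1$, where Lemma~\ref{lem:bru} is stated and positivity still shows a restriction is nonzero.
\item The paper's route is one line given the citation and stays categorical.
\end{itemize}

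When writing it up:
\begin{itemize}
\item drop the reduction to fundamental weights, since the argument works verbatim for any $\La\in\wlP_+$;
\item drop the abandoned first attempt via a last letter $s_j$;
\item replace the garbled coefficient $(e_\gamma u_{w\La_i},u_{\La_i})$ by the two-factor coefficient $(ab\,u_{w\La},u_\La)$.
\end{itemize}
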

\begin{proof}
By \cite[Proposition 4.8]{KKOP18} (see also \cite[Proposition 3.52]{Murata24}), we have 
$$
\dM(w\La, v\La) \hconv \dM(v\La, \La) \simeq \dM(w\La, \La).
$$
Since $\dM(w\La, \La)$ is contained in $\Cwv$, we have $\dM(v\La, \La) = \one$, which yields the assertion. 
\end{proof}

\Lemma\label{lem:main}
Let $i\in I$ with $s_iw<w$.
Let $M$ be a simple module in $\Cw$ such that $\de_i(M)=1$ and 
let $N$ be a real simple in $\Cw$ such that $N\simeq\K_i(N)$.
Then there exists a simple subquotient $S$ of $M\conv N$ such that
$\de_i(S)=1$.
\enlemma
\Proof
Assume that $\de_i(S)\not=1$ for any simple subquotient $S$ of $M\conv N$.
Since any simple subquotient $S$ of $M\conv N$ satisfies
$\de_i(S)\le \de_i(M)+\de_i(N)=1$ (see \cite[Proposition 3.2.10]{KKKO18}),
any simple subquotient $S$ of $M\conv N$ commutes with $\ang{i}$. This means that 
$\ang{i}\conv S$ is simple.
As $\K_i(M)\conv N$ is a quotient of $\ang{i}\conv M\conv N$,
 $\ang{i}$ is a \cfac of
any simple subquotient of $\K_i(M)\conv N$.
Hence $[\K_i(M)]\cdot[N]$ is divisible by $[\ang{i}]$ in
$K(\Cw)\vert_{q=1}$, which is the polynomial algebra 
of the variables including $[\ang{i}]$.
Hence, $[\K_i(M)]$ or $[N]$ is divisible by $[\ang{i}]$.
Therefore, \cite[Theorem 4.1]{KK19} implies that $\ang{i}$ is
a \cfac of $\K_i(M)$ or $N$, which is a contradiction by Lemma \ref{Lem: basics for Ki} \eqref{Lem: basics for Ki (ii)}.
\QED

The following lemma, a consequence of \eqref{Eq:cfactorst}, is one of the main ingredients to construct a seed of $\Cwv$.

\Lemma\label{lem:freez}
Let $w,v\in\weyl$ such that $v\le w$.
Let $\wseed=\bl (M_k)_{k\in\sfK},\tB)$ with $\sfK=\sfK^\ex\sqcup\sfK^\fr$ be a completely admissible seed in $R\gmod$.
Let $\sfJ=\sfJ^\ex\sqcup\sfJ^\fr$ satisfies
\bna
\item \label{it: J contain K}
$\sfJ\subset\sfK$ and $\sfJ^\ex\subset\sfK^\ex$,
\item \label{it: Mk in Cwv} $M_k\in \Cwv$ for any $k\in\sfJ$,
\item \label{it: good restriction}
$\tB\vert_{(\sfK\setminus\sfJ)\times\sfJ^\ex}=0$.
\ee
Then $\seed=\bl (M_k)_{k\in\sfJ},\tB\vert_{\sfJ\times\sfJ^\ex}\br$
is a completely admissible seed in
$\Cwv$.
\enlemma
\Proof
It is trivial that $\seed$ is completely admissible in $R\gmod$.
Hence it is enough to show that the conditions
\eqref{it: J contain K}--\eqref{it: good restriction} are satisfied after mutations.

Let $s\in\sfJ^\ex$ and $\wseed'=\bl (M'_k)_{k\in\sfK},\tB')$
be the mutation of $\wseed$ at $s$.
Then it is easy to see that $\tB'\vert_{(\sfK\setminus\sfJ)\times\sfJ^\ex}=0$.
Since $M_s\hconv M'_s$ and $M'_s\hconv M_s$ are $\seed$-monomials
by~\eqref{it: good restriction}, $M_s\conv M'_s$ also belongs to $\Cwv$.
Hence $M'_s$ also belongs to $\Cwv$ by \eqref{Eq:cfactorst}.
Hence $\seed'=\bl (M'_k)_{k\in\sfJ},\tB'\vert_{\sfJ\times\sfJ^\ex}\br$
is a seed in $\Cwv$.

Thus we obtain the desired result.
\QED

Let $i\in I$ and assume that
$$
v<s_iv \qtq s_iw<w.
$$
Let $\wseed=\bl \{ M_k \}_{k\in \Ks},\tB_\circ\br$ be a seed of $\Cw[s_iw]$ \cmu,
 where $\tB_\circ\seteq  (b_{j,k})_{(j,k)\in \Ks\times \Ks^\ex}$ 
is an exchange matrix.
We define 
$$
\seed \seteq \wseed\cap\Cwv[s_iw,v] \qtq \Js \seteq \st{k\in \Ks\mid M_k\in\seed},
$$
and we assume that
$$
\text{ $\seed$ satisfies \eqref{cond:frozen} with
$\Cwv[s_iw,v]$ instead of $\Cwv$.} 
$$
Set 
\begin{equation} \label{Eq: S and J}
\begin{aligned}
\seed_\ex &\seteq  \{ M \in \seed \mid \text{$M$ satisfies \eqref{cond:frozen (a)} in
\eqref{cond:frozen}} \}, \quad  \Js^\ex \seteq  \{ k\in \Js \mid M_k \in \seed_\ex  \}, \\
\seed_\fr &\seteq  \{ M \in \seed \mid \text{$M$ satisfies \eqref{cond:frozen (b)} in
\eqref{cond:frozen}} \},\quad  \Js^\fr \seteq  \{ k\in \Js \mid M_k \in \seed_\fr  \}.
\end{aligned}
\end{equation}
Hence we have
$$\seed_\ex\subset\wseed_\ex\qtq\seed\cap\wseed_\fr\subset\seed_\fr,
$$
where the second inclusion follows from Lemma \ref{lem:Mw Mwv}.

Define 
$$
\tseed \seteq  \{\ang{i}\} \cup \K_i(\seed)  \qtq \twseed \seteq  \{\ang{i}\} \cup\K_i(\wseed),
$$
and  set $\tJs \seteq  \st{\ko} \sqcup \Js$,  $\tKs \seteq \st{\ko} \sqcup \Ks$
and
$$
\tM_\ko \seteq \ang{i}, \qquad   \tM_k \seteq \K_i(M_k) \quad  \text{ for $k\in\Ks$.}
$$
Note that $\tseed\subset\twseed$ and  $\tseed$ satisfies \eqref{cond:frozen} in $\Cwv[w,v]$ by Proposition \ref{prop: FK ex fr}.   
We now define $\tseed_\ex$, $\tseed_\fr$, $\tJs^\ex$ and $\tJs^\fr$ in the same manner as \eqref{Eq: S and J}. Theorem \ref{th:ifro} tells us that 
$$
\tJs^\ex = 
\begin{cases}
\{ \ko \} \cup \Js^\ex  &\text{ if $s_iv  \le s_iw$,}\\
\Js^\ex  &\text{ otherwise,}
\end{cases}
\qtq
\tJs^\fr = \tJs \setminus \tJs^\ex. 
$$
Note that 
$\tseed_\ex \subset \twseed_\ex $ and $  \tseed \cap \twseed_\fr \subset \tseed_\fr.$

Proposition \ref{prop:Kseed} says that there is an exchange matrix 
$\tB \seteq  (\tb_{j,k})_{(j,k)\in\tKs\times\tKs^\ex}$ such that the pair
$(\twseed, \tB)$ is a seed of $\Cw$. Moreover, by Proposition \ref{prop:Kseed} \eqref{prop:Kseed (iii)} together with the seeds 
\eqref{Eq: initial T0} and \eqref{Eq: initial tT0},
we have
\begin{align} \label{Eq: tB B}
\tB\vert_{\Ks\times\Ks_\ex}=\tB_\circ.
\end{align}
We further assume that 
\eq \label{eq:frbarr}
\begin{aligned}
&\text{$\seed$ is a Laurent family in $\Cwv[s_iw,v]$}, \\
& \tB_\circ  \vert_{(\Ks\setminus \Js) \times \Js^\ex} =0.
\end{aligned}
\eneq
Note that \eqref{eq:frbarr} implies the pair $(\seed,\tB\vert_{\Js\times\Js^\ex})$ is a completely admissible seed
in $\Cwv[s_iw,v]$  by Lemma~\ref{lem:freez}.

\Th\label{th:main} \ 
\bnum
\item \label{it: still outside0}
We have 
$$
\tB\vert_{(\tKs\setminus\tJs) \times \tJs^\ex}=0.
$$
\item \label{it: still ca}
The pair $ \bl\tseed, \tB\vert_{\tJs \times \tJs^\ex}\br$ is a completely admissible seed in $\Cwv$.
\ee

\enth
\begin{proof}
Since~\eqref{it: still ca} follows from~\eqref{it: still outside0} by Lemma~\ref{lem:freez}, 
we focus on proving~\eqref{it: still outside0}.
We may assume that 
\begin{align} \label{Eq: svsw}
s_i v \le s_i w
\end{align}
since otherwise $\ko\in\tJs^\fr$ by Theorem~\ref{th:ifro} and 
the assertion follows directly from \eqref{Eq: tB B} and \eqref{eq:frbarr}. 
In this case, we have $ \tJs^\ex = \{ \ko \} \cup \Js^\ex $. 
By \eqref{Eq: tB B} and \eqref{eq:frbarr}, it is enough to show that 
$$
\tB\vert_{ (\tKs\setminus\tJs) \times \{ k_0 \} }=0.
$$
Let us first show that  $\tseed$  admits a mutation at $\ko$.

Set $C\seteq  \conv[{k\in \Js} ]\tM_k$.
Note that $C$ is   an  \monomial[\tseed]. 
By \eqref{Eq: svsw} and Theorem \ref{th:ifro}, there exists a simple module $M\in\Cwv$ such that $\de_i(M)>0$.
Replacing $M$ with $\tF_i^{\de_i(M)-1}M$, we may assume that $\de_i(M)=1$ from the beginning.
Assume that $\de(C,M)>0$. Then Lemma~\ref{lem:main} implies that
there exists a simple subquotient $M'$ of $M\conv C$ such that
$\de_i(M')=1$. 
Hence $M'$ is a simple module in $\Cwv$ such that
$\de_i(M')=1$ and $\de(C,M')<\de(C,M)$ by \cite[Corollary 4.1.2]{KKKO18} (see also \cite[Corollary 3.18]{KKOP21A}).  
Arguing by induction on $\de(C,M)$,
we assume from the outset that
there exists a simple $M\in\Cwv$ such that
$$
\de_i(M)=1 \qtq \text{$M$ commutes with $C$.}
$$
Hence $M$ commutes with $\tM_k$ for any $k\in\Js$.

As $\ang{i}$ and $M$ commute with every $\tM_k$, 
the modules $\ang{i}\hconv M$ and $M\hconv\ang{i}$ commute
with $\tM_k$ for all $k\in\tJs$.
Proposition \ref{prop: laurent} and \eqref{eq:frbarr} say that
$\tseed$ is a Laurent family in $\Cwv$, which implies that  there exist
$\bfa = (a_k)_{k\in \tJs}, \bfb  = (b_k)_{k\in \tJs} \in\Z_{\ge0}^{\oplus \tJs}$ 
such that
$$
M\hconv\ang{i}\simeq \tseed (\bfa) \qtq \ang{i}\hconv M\simeq\tseed(\bfb),
$$
where $\tseed (\bfa)$ is defined in \eqref{Eq:M(a)}.
Since $\ang{i}$ is not a \cfac of $M\hconv\ang{i}$ and  $\ang{i}\hconv M$ by Lemma \ref{Lem: basics for Ki} \eqref{Lem: basics for Ki (ii)},
we have $ a_{\ko}= b_{\ko}=0$.
Define  $\bfc \seteq  (c_k)_{k\in \tJs}\in\Z_{\ge0}^{\oplus \tJs}$ by $ c_k=\min( a_k, b_k)$.
Then $[\ang{i}]\cdot[M]$ is divisible by $[\tseed (\bfc)]$ in
$K(\Cw)\vert_{q=1}$, which means that $[M]$ is divisible by  $[\tseed (\bfc)]$.
Hence \cite[Theorem 4.1]{KK19} implies that $[\tseed (\bfc)]$ is 
a \cfac of $M$, i.e., there exists a simple $X\in\Cw$ such that
$M\simeq X\conv \tseed (\bfc)$.
Hence,  \eqref{Eq:cfactorst} implies that $X\in \Cwv$ and
$$
X\hconv\ang{i}\simeq \tseed (\bfa-\bfc) \qtq \ang{i}\hconv X\simeq \tseed (\bfb-\bfc).
$$
Replacing $M$ with $X$ from the beginning, we may assume that
$$
M\hconv\ang{i}\simeq \tseed (\bfa) \qtq \ang{i}\hconv M\simeq\tseed (\bfb)
$$
with 
$\bfa\in\Z_{  \ge0}^{\oplus \tJs}$ and $\bfb\in\Z_{\ge0}^{\oplus \tJs}$ such that
\begin{align} \label{Eq: bfa bfb}
 a_{\ko}= b_{\ko}=0 \qtq  a_k b_k=0 \qt{for any $k\in\tJs$.}
\end{align}
Moreover $M$ commutes with all $\tM_k$ ($k\in\Js$). Thus we have an exact sequence
\begin{align} \label{Eq: exact mut}
0\To \tseed (\bfa) \To\ang{i}\conv M\To \tseed (\bfb) \To0.
\end{align}

We shall  show that $M$ commutes with $\tM_k$ for $k\in\Ks$.
Applying $\tM_k\conv -$ to the above exact sequence \eqref{Eq: exact mut}, we obtain
$$
0\To \tM_k\conv \tseed (\bfa)  \To \tM_k\conv\ang{i}\conv M\To 
\tM_k\conv \tseed (\bfb)\To0.
$$
Note that $\tM_k\conv \tseed (\bfa)$ and $\tM_k\conv \tseed (\bfb)$ are simple since $\twseed$ is a seed in $\Cw$. 
Hence $\ang{i}\conv \tM_k\conv M\simeq \tM_k\conv\ang{i}\conv M$ has length $2$.
If $\tM_k$ and $M$ do not commute, then
$$[\ang{i}\conv \tM_k\conv M]=[\ang{i}\conv (\tM_k\hconv M)]+
[\ang{i}\conv (M\hconv \tM_k)],$$
and  $\ang{i}\conv (\tM_k\hconv M) $, $\ang{i}\conv (M \hconv  \tM_k) $ are simple.
Since $\ang{i}\conv \tM_k\conv M $ has simple head, we have
$$
\ang{i}\conv (\tM_k\hconv M)\simeq \tM_k\hconv(\ang{i}\hconv M)\simeq
\tM_k\hconv \tseed (\bfb)  \simeq \tM_k \conv \tseed (\bfb).
$$
By \eqref{Eq: bfa bfb}, $\tM_k \conv \tseed (\bfb)$ cannot have $\ang{i}$ 
as a convolution factor, which is a contradiction.
Thus $M$ commutes with $\tM_k$ for any $k\in\Ks$, which means that  $M$ is a mutation of $\twseed$ at $\ko$ by Lemma \ref{Lem: criterion for cv} \eqref{Lem: criterion for cv (ii)}.

It follows from \eqref{Eq: exact mut}
that
$\tb_{k, \ko}=0$ for any $k\in\tKs\setminus\tJs$, which implies that 
$$
\tB\vert_{ (\tKs\setminus\tJs) \times \{ k_0 \} }=0.
$$
This  completes the proof. 
\end{proof}

\subsection{Seed construction for $\Cwv$} \label{Sec:wvseed}
Recall the seed construction for $\Cw$ given in Section \ref{subsec:exp}.
Let $w,v \in \weyl$ with $v\le w$ and take a reduced sequence of $w$
$$
\uw=(i_1,i_2,\ldots,i_r).
$$
For $k\in[1,r+1]$, we set $w_{\ge k} \seteq s_{i_k} s_{i_{k+1}} \cdots s_{i_r}$ and define $v_{\ge k}^{\uw}$ inductively by
\eqn
v_{\ge1}^{\uw}&&=v,\\
v_{\ge k}^{\uw} &&=\bc v_{\ge k-1}^{\uw}&\text{if $s_{i_{k-1}}v_{\ge k-1}^{\uw} > v_{\ge k-1}^{\uw} $,}\\
s_{i_{k-1}}v_{\ge k-1}^{\uw}&\text{if $s_{i_{k-1}}v_{\ge k-1}^{\uw} < v_{\ge k-1}^{\uw}$,}\ec
\hs{3ex}\text{for $k \in [2, r+1]$.}
\eneqn
Note that $w_{\ge r+1}= v_{\ge r+1}^{\uw}= \id$.
We set 
$$w_{\le k}\seteq w(w_{\ge k+1})^{-1}=s_{i_1}\cdots s_{i_k}\qtq v_{\le k}\seteq v(v_{\ge k+1})^{-1}\qt{for $k\in[1,r]$.}
$$

We set
\begin{align} \label{Eq: [1,r]wv}
[1,r]_v^{\uw} \seteq  \st{k\in[1,r]\mid v_{\ge k}^{\uw} = v_{\ge k+1}^{\uw}}.
\end{align}
If no confusion arises, we simply write $v_{\ge k}$ and $[1,r]_v$
instead of $v_{\ge k}^{\uw}$ and $[1,r]_v^{\uw}$, respectively.
We then consider the seed of $\Cw$
\begin{align*}
\wseed \seteq  \wseed_{\uw,\kfc} = (M_k)_{k\in [1,r]}, 
\end{align*}
where $\kfc=(c_1,\ldots, c_r)\in \st{\kt,\ft}^r$ given by 
\begin{align} \label{Eq: ct for T}
 c_t \seteq  
 \bc
\kt &\text{if $t \in [1,r]_v$,}\\
\ft&\text{otherwise}
\ec
\end{align}
(see  \eqref{Eq: TM} for $\wseed_{\uw,\kfc}$). 
This sequence $\kfc$ is called the \emph{KF sequence associated with $\uw$ and $v$.}

Define 
\begin{align} \label{Eq: def of Swv}
\seed_{\uw, v} \seteq  ( M_k)_{k\in [1,r]_v^{\uw}} \subset \wseed.
\end{align}
It follows from \eqref{Eq: Mk G} that
$$
M_k= \G_1\cdots \G_{k-1}(\ang{i_k})\qt{for any $k\in [1,r]_v$,}
$$
where $ \G_t = \K_{i_t}$ if $c_t = \kt$ and $ \G_t = \F_{i_t}$ if $c_t = \ft$.

\Prop
Let $k\in [1,r]$ such that $c_k = \ft$. 
Then we have
$$
M_k=\dM(w\La_{i_k},v_{\le k-1}\La_{i_k}),
$$
where $ v_{\le k-1}\seteq  v (v_{\ge k})^{-1}$.
In particular, $M_k \not \in \Cwv$ and $M_k$  commutes with any simple in
$\Cwv$.
\enprop
\Proof
Note that 
$$
M_k= \G_1\cdots \G_{k-1}( \dM( w_{\ge k} \La_{i_k}, \La_{i_k})).
$$ 
For $1 \le a \le b \le r$, we set $v_{[a,b]} \seteq  v_{\le a-1}^{-1} v_{\le b} = v_{\ge a} v_{\ge b+1}^{-1}$.
We shall use descending induction on $t \in [1,k]$ to show
$$
\G_{ t} \G_{ t+1} \cdots \G_{ k-1 }\dM(w_{\ge k}\La_{i_k}, \La_{i_k})\simeq
\dM(w_{\ge t}\La_{i_k}, v_{[t,k-1]}\La_{i_k}).
$$
It is obvious at $t=k$, and for $t<k$, we have
\eqn
&& \G_{ t}\cdots \G_{ k-1 }\dM(w_{\ge k}\La_{i_k}, \La_{i_k})
\simeq \G_{ t}\dM(w_{\ge t+1}\La_{i_k}, v_{[t+1,k-1]}\La_{i_k})
\simeq\dM(w_{\ge t}\La_{i_k}, v_{[t,k-1]}\La_{i_k}).
\eneqn

The last statement follows from the fact that $v_{\le k-1} \lneq v $ and  
$M_k$ commutes with any simple in $\Cwv[w,v_{\le k-1}]\supset\Cwv$.
\QED

We set  
\begin{align} \label{Eq: index for Cwv}
\sfJ_{\uw, v} \seteq  [1,r]_{v}^{\uw}, \quad 
\sfJ_{\uw, v}^{\fr} \seteq  \{ k \in \sfJ_{\uw, v} 
\mid  s_{i_k} v_{\ge k}^{\uw} \not \le w_{\ge k+1}^{\uw}  \},
\quad  \sfJ_{\uw, v}^{\ex} \seteq  \sfJ_{\uw, v} \setminus \sfJ_{\uw, v}^{\fr}.
\end{align}
\begin{lem} \label{Lem: frozen Jfr}
$\sfJ_{\uw, v}^{\fr} = \{ k\in \sfJ_{\uw, v} \mid \text{$M_k$ satisfies $\eqref{cond:frozen}$ \eqref{cond:frozen (b)}}\}. $
\end{lem}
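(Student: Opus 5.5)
We prove the lemma by induction on $r=\ell(w)$, peeling off the first letter $i=i_1$ of $\uw$. The point is that the recursive structure of $\seed_{\uw,v}$ matches Theorem~\ref{th:ifro}.

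\textbf{Setup.} Write $\uw'=(i_2,\ldots,i_r)$, a reduced expression of $s_iw$. There are two cases.
\begin{itemize}
\item If $s_iv<v$, then $1\notin[1,r]_v$ and $c_1=\ft$. Here $v_{\ge 2}=s_iv$, and the indices of $[1,r]^{\uw}_v$ correspond to those of $[1,r-1]^{\uw'}_{s_iv}$ under the shift $k\mapsto k-1$. Also $M_k=\F_i(M'_{k-1})$, where $M'$ denotes the modules of $\seed_{\uw',s_iv}$ in $\Cwv[s_iw,s_iv]$.
\item If $v<s_iv$, then $1\in[1,r]_v$, $c_1=\kt$ and $v_{\ge2}=v$. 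Here $M_1=\ang{i}$ and $M_k=\K_i(M'_{k-1})$ for $k\ge2$, where $M'$ denotes the modules of $\seed_{\uw',v}$ in $\Cwv[s_iw,v]$.
\end{itemize}
In both cases the data $v_{\ge k}$ and $w_{\ge k+1}$ for $k\ge2$ coincide with those for $\uw'$ at index $k-1$, since they depend only on the suffix. So the defining condition of $\sfJ^{\fr}_{\uw,v}$ in \eqref{Eq: index for Cwv}, restricted to $k\ge2$, is exactly the condition for $k-1\in\sfJ^{\fr}_{\uw',v_{\ge2}}$.

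\textbf{Inductive step for $k\ge2$.} By the induction hypothesis, $M'_{k-1}$ satisfies (b) of \eqref{cond:frozen} in the smaller category if and only if $k-1$ lies in the corresponding frozen set. We must check that $\F_i$ (respectively $\K_i$) preserves and reflects which of (a), (b) holds. For $\F_i$ this is the equivalence of Proposition~\ref{prop:stability}\eqref{prop:stability (ii)} together with $\F_i\dM(s_iw\La,s_iv\La)\simeq\dM(w\La,v\La)$, as in the proof of Proposition~\ref{prop: FK ex fr}\eqref{it: Fim satisfying}. For $\K_i$ we use two facts:
\begin{itemize}
\item a frozen factor of $\dM(s_iw\La,v\La)$ is sent to a factor of $\dM(w\La,v\La)$, by Lemma~\ref{lem:iaKi} and Lemma~\ref{Lem: KiM KiN};
\item non-commutation is preserved, by Lemma~\ref{lem:Kmain}\eqref{it:Mainde}.
\end{itemize}
This is the content of Proposition~\ref{prop: FK ex fr}\eqref{it: Kim satisfying}. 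That proposition needs its hypothesis that the family satisfies \eqref{cond:frozen}; the induction hypothesis supplies this, because the family $\seed_{\uw',v_{\ge2}}$ satisfies \eqref{cond:frozen} with the frozen set described by the lemma. Since (a) and (b) are mutually exclusive, preservation of each alternative gives the equivalence.

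\textbf{The index $k=1$ when $v<s_iv$.} Here $M_1=\ang{i}$ and $v_{\ge1}=v$. The condition $1\in\sfJ^{\fr}$ reads $s_iv\not\le s_iw$. Theorem~\ref{th:ifro} states exactly that this is equivalent to (a) of the theorem: $\ang{i}$ is a convolution factor of some $\dM(w\La,v\La)$, which is (b) of \eqref{cond:frozen}. It is also equivalent to $\ang{i}$ being central, which is the negation of (a) of \eqref{cond:frozen}. The hypotheses $v\le w$, $v<s_iv$ and $s_iw<w$ hold. The base case $r=0$ is vacuous.

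\textbf{Main obstacle.} The delicate point is the exclusivity needed to transfer (b) through $\K_i$. A module satisfying (a) in $\Cwv[s_iw,v]$ could a priori become central after applying $\K_i$. Lemma~\ref{lem:Kmain}\eqref{it:Mainde} rules this out, since all modules in $\Cwv[s_iw,v]$ have $\ep_i=0$ and so $\de$ is preserved. Conversely, a module satisfying (b) maps to a factor of $\dM(w\La,v\La)$, so it cannot satisfy (a). We would check carefully that the weights $\La$ used in Lemma~\ref{lem:iaKi} match, which requires $v<w$; the degenerate case $v=w$ has all modules frozen and is handled directly.
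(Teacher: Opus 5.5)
Your argument is correct and is essentially the paper's proof organized as an induction on $\ell(w)$. The paper instead applies Theorem~\ref{th:ifro} directly in $\Cwv[w_{\ge k},v_{\ge k}]$ and pushes everything through $\G_1\cdots\G_{k-1}$ in one step: alternative (b) via Proposition~\ref{prop: FK ex fr}, and a non-commuting witness via Lemma~\ref{lem:Kmain}\eqref{it:Mainde} and Proposition~\ref{prop:stability} — exactly the ingredients you apply one letter at a time. One imprecision: the fact that $\seed_{\uw',v_{\ge2}}$ satisfies \eqref{cond:frozen}, i.e.\ that each member satisfies exactly one alternative, does not follow from the statement of the lemma for the shorter word; it comes from iterating Proposition~\ref{prop: FK ex fr} along the recursion \eqref{Eq: S(k)}, as in the proof of Theorem~\ref{thm: Swv for Cwv}.
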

\begin{proof}
We set  $ \sfJ := \sfJ_{\uw, v}^{\fr}$ and $\sfJ' := \{ k\in \sfJ_{\uw, v} \mid \text{$M_k$ satisfies $\eqref{cond:frozen}$ \eqref{cond:frozen (b)}}\}.$
By Theorem~\ref{th:ifro} and Proposition \ref{prop: FK ex fr}, we have  $ \sfJ \subset \sfJ'$. 

Let $k \in \sfJ'$ and assume that $s_{i_k} v_{\ge k}  \le w_{\ge k+1} $. Then Theorem~\ref{th:ifro} says that there is a simple module $X \in \Cw[w_{\ge k}, v_{\ge k}]$ such that $\de(\ang{i_k}, X) > 0$.
By Lemma \ref{lem:Kmain} \eqref{it:Mainde} and Proposition \ref{prop:stability} \eqref{prop:stability (ii)}, we have 
$$
\de(M_k, \G_1\cdots \G_{k-1}(X)) = \de(\ang{i_k}, X) > 0,
$$
where $ \G_t = \K_{i_t}$ if $c_t = \kt$ and $ \G_t = \F_{i_t}$ if $c_t = \ft$. 
Since $\G_1\cdots \G_{k-1}(X)$ belongs to $\Cwv$, this is a contradiction to  $k \in \sfJ'$.
Hence we have $ \sfJ' \subset \sfJ$, which completes the proof. 
\end{proof}

\begin{remark} \label{Rmk: KL R poly}
We define non-negative integers $d_{w,v}$ ($v \le w$) inductively as follows:
\begin{itemize}
\item $d_{w,w}=0$ for any $w \in \weyl$,
\item for any $v \le w$ and $i\in I$ such that $ s_iw < w$, 
\begin{align*}
d_{w,v} = 
\bc
d_{s_iw,s_iv} & \text{ if $s_iv < v$,} \\
d_{s_iw,v} + 1 & \text{ if $v < s_iv$ and $ s_iv \not \le s_i w$,} \\
d_{s_iw,v} & \text{ if $v < s_iv$ and $ s_iv  \le s_i w$.}
\ec
\end{align*}
\end{itemize}
Then we have $d_{w,v} = |\sfJ_{\uw, v}^{\fr}|$. 
This $d_{w,v}$ coincides with the formula for the coefficient of the second largest
power in \emph{Kazhdan-Lusztig R-polynomials} in \cite[(3.1)]{ELPSW26}. 
We refer the readers to \cite[Section 3.4]{ELPSW26} for details on Kazhdan-Lusztig R-polynomials and the number of frozen variables in open Richardson varieties.  
\end{remark}

\begin{thm} \label{thm: Swv for Cwv}
The set 
$\seed_{\uw, v}$ is a Laurent family in $\Cwv$. Moreover, 
$\seed_{\uw, v}$ with $\sfJ_{\uw, v} = \sfJ_{\uw, v}^\ex \sqcup \sfJ_{\uw, v}^\fr$ is  a completely admissible seed  of  $\Cwv$.
\end{thm}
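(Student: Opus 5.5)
We argue by induction on $r=\ell(w)$, peeling off the first letter $i\seteq i_1$ of $\uw$. Put $\uw'=(i_2,\dots,i_r)$, a reduced expression of $s_iw$, and set $v'\seteq v_{\ge 2}^{\uw}$. Then $v_{\ge k}^{\uw}=v'^{\,\uw'}_{\ge k-1}$ for $k\ge2$, so $[1,r]^{\uw}_v\setminus\{1\}$ is the shift by one of $[1,r-1]^{\uw'}_{v'}$. The KF sequence of $(\uw,v)$ restricts to that of $(\uw',v')$, and \eqref{Eq: Mk G} gives
\[
\wseed_{\uw,\kfc}=\{M_1\}\cup\G_1\bl\wseed_{\uw',\kfc'}\br .
\]
Here $M_1=\ang{i}$ when $c_1=\kt$ and $M_1=\dM(w\La_i,\La_i)$ when $c_1=\ft$. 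We will strengthen the induction hypothesis to assert, in addition, that $\seed_{\uw,v}$ satisfies \eqref{cond:frozen} in $\Cwv$, and that $\tB\vert_{([1,r]\setminus[1,r]_v)\times \sfJ^\ex_{\uw,v}}=0$, where $\tB$ is the exchange matrix of $\wseed_{\uw,\kfc}$ from Proposition~\ref{prop: Twc}. The base case $r=0$ is trivial.

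\emph{Case $s_iv<v$.} Here $c_1=\ft$, $1\notin[1,r]_v$, $v'=s_iv$, and $\seed_{\uw,v}=\F_i(\seed_{\uw',s_iv})$. By Proposition~\ref{prop:stability}\eqref{prop:stability (ii)}, $\F_i\cl\Cwv[s_iw,s_iv]\to\Cwv$ is a monoidal equivalence. Hence the Laurent property and \eqref{cond:frozen} transfer, using Proposition~\ref{prop: laurent}\eqref{it: Fim Laurent} and Proposition~\ref{prop: FK ex fr}\eqref{it: Fim satisfying}, the latter applied with $(w,s_iv)$ in place of $(w,v)$. By Proposition~\ref{prop:Fseed}, $\F_i$ is compatible with mutations, so the exchange matrix of $\{\dM(w\La_i,\La_i)\}\cup\F_i(\wseed_{\uw',\kfc'})$ restricts to the old one on the old indices. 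The one new index is $M_1=\dM(w\La_i,\La_i)$, which lies outside $[1,r]_v$. We must check that the row of $M_1$ vanishes on the exchangeable columns of $\seed_{\uw,v}$. For this we check that every $M_k$ with $k\in\sfJ^\ex_{\uw,v}$ has a mutation $M_k'$ lying in $\Cwv$, which forces the monomials in the exchange sequence into $\Cwv$ by \eqref{Eq:cfactorst}; then Lemma~\ref{lem:Mw Mwv} excludes $\dM(w\La_i,\La_i)$, since $v\La_i\ne\La_i$. By the induction hypothesis, Lemma~\ref{lem:freez} provides this mutation inside $\Cwv[s_iw,s_iv]$, and applying $\F_i$ moves it into $\Cwv$. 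The frozen/exchangeable labels agree with \eqref{Eq: index for Cwv} by Lemma~\ref{Lem: frozen Jfr}.

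\emph{Case $v<s_iv$.} Here $c_1=\kt$, $1\in[1,r]_v$, and $v'=v$. Then $\wseed_{\uw,\kfc}=\{\ang{i}\}\cup\K_i(\wseed_{\uw',\kfc'})$ and $\seed_{\uw,v}=\{\ang{i}\}\cup\K_i(\seed_{\uw',v})$. This is precisely the setting of \S\ref{Sec: pair}, with $\wseed=\wseed_{\uw',\kfc'}$ and $\seed=\seed_{\uw',v}$. We first verify that $\seed=\wseed\cap\Cwv[s_iw,v]$. The members indexed by $[1,r-1]_v$ lie in $\Cwv[s_iw,v]$, and by the preceding proposition the remaining ones are determinantial modules $\dM(s_iw\La,v_{\le k-1}\La)$ with $v_{\le k-1}<v$, so they are not in $\Cwv[s_iw,v]$. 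The hypotheses of \S\ref{Sec: pair}, namely \eqref{cond:frozen} and \eqref{eq:frbarr}, are exactly the strengthened induction hypothesis. Proposition~\ref{prop: laurent}\eqref{it: Kim Laurent} then gives the Laurent property, Proposition~\ref{prop: FK ex fr}\eqref{it: Kim satisfying} gives \eqref{cond:frozen}, and Theorem~\ref{th:main} gives both complete admissibility and the vanishing $\tB\vert_{(\tKs\setminus\tJs)\times\tJs^\ex}=0$. Finally, Lemma~\ref{Lem: frozen Jfr} together with Theorem~\ref{th:ifro} identifies the frozen set with $\sfJ^\fr_{\uw,v}$: index $1$ is frozen iff $s_iv\not\le s_iw$, which matches \eqref{Eq: index for Cwv} at $k=1$.

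The main obstacle is the bookkeeping in the $\F_i$ case. We must show that the newly added frozen vertex $\dM(w\La_i,\La_i)$ contributes no arrows to the exchangeable part of $\seed_{\uw,v}$; this is the analogue of Theorem~\ref{th:main}\eqref{it: still outside0}. The step needs care, because a mutation of $\F_i(M_k)$ in the big seed could a priori involve $M_1$. The route I would try first is the one above: the exchange monomials of a mutation lying in $\Cwv$ cannot contain $\dM(w\La_i,\La_i)$, by Lemma~\ref{lem:Mw Mwv}. I would also double-check that the $\F_i$-case labelling agrees with Proposition~\ref{prop:Fseed}\eqref{prop:Fseed (ii)}.
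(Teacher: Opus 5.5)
Your proposal is correct and follows the paper's own proof. Both run the same induction along the reduced word: the paper builds $\seedk[k]$ from $\seedk[k+1]$ by applying $\F_{i_k}$, or by taking $\{\ang{i_k}\}\cup\K_{i_k}(\seedk[k+1])$, and uses Propositions~\ref{prop: laurent} and~\ref{prop: FK ex fr} for the Laurent and frozen properties and Theorem~\ref{th:main} at the $\K_i$ steps. The paper compresses the $\F_i$ steps into ``$\F_i$ preserves complete admissibility'', and your explicit check that the vanishing condition in \eqref{eq:frbarr} survives those steps fills this in. You can also see that check directly: by Proposition~\ref{prop:Fseed}~(iii), the exchange sequences at the old exchangeable vertices are $\F_i$-images of the old ones, so they never involve $\dM(w\La_i,\La_i)$.
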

\begin{proof}
For $k\in [1,r+1]$, we define $\seedk \subset \Cwv[w_{\ge k},v_{\ge k}]$ inductively by
\begin{equation} \label{Eq: S(k)}
\begin{aligned}
\seedk[r+1]& \seteq \emptyset,\\
\seedk& \seteq \bc
\F_{i_k}(\seedk[k+1])&\text{if $ c_k = \ft$,}\\
\st{\ang{i_k}}\sqcup\K_{i_k}(\seedk[k+1])&\text{if $c_k = \kt$,}\ec
\hs{3ex}\text{for $k\in[1,r]$.}
\end{aligned}
\end{equation}
By the construction, we have $\seed_{\uw, v} = \seedk[1]$.
Proposition \ref{prop: laurent} and \ref{prop: FK ex fr} say that 
$\seedk$ is a Laurent family of $\Cwv[w_{\ge k},v_{\ge k}]$ and satisfies \eqref{cond:frozen} for each $k\in [1,r+1]$. 
As $\F_i$ preserves the completely admissibility, the assertion follows by applying Theorem \ref{th:main} to the above construction \eqref{Eq: S(k)}.
\end{proof}

\begin{example} \label{Ex: Cwv}
We continue with Example \ref{Ex: Cw}. Recall the reduced expression $\uw = (1,2,1,3,2,1)$, and take $v = s_2s_3$.
Then $w_{\ge k}$ and $v_{\ge k}$ are computed as 
\begin{table}[H]
\begin{center}
\begin{tabular}{|c||c|c|c|c|c|c|}
\hline
$k$ & $1$ & $2$ & $3$ & $4$ & $5$ & $6$ \\
\hline
\hline
$w_{\ge k}$ & $ s_1s_2s_1s_3s_2s_1 $  & $s_2s_1s_3s_2s_1 $ & $s_1s_3s_2s_1 $ & $s_3s_2s_1 $ & $s_2s_1 $ & $s_1 $  \\
\hline
$v_{\ge k}$ & $s_2s_3$  & $s_2s_3$  & $s_3$ & $s_3$ & $\id$ & $\id$ \\
\hline
\end{tabular}
\end{center}
\end{table}
Hence, we have $[1,6]_v =\st{ 1,3,5,6 }$, the associated KF sequence
$$ 
 \kfc= (c_1, \ldots, c_6) = (\kt, \ft,   \kt, \ft, \kt, \kt),
$$
and the decomposition $\sfJ_{\uw, v} = [1,6]_v = \sfJ_{\uw, v}^{\ex} \sqcup \sfJ_{\uw, v}^{\fr} $ given by 
$$
\sfJ_{\uw, v}^{\ex} = \{ 3 \} \qtq
\sfJ_{\uw, v}^{\fr} = \{ 1,5,6 \}.
$$
Since the KF sequence $\kfc$ coincides with the KF sequence in Example \ref{Ex: Cw}, we have 
$$
\seed_{\uw, v} = \{ M_1, M_3, M_5, M_6  \}, 
$$
where $M_k$ are given in Example \ref{Ex: Cw}. The seed $\seed_{\uw, v}$ can be depicted as follows: 
$$
\xymatrix{
M_3  \ar[rr]  && \text{\fbox{$M_5$}}   && \text{\fbox{$M_1$}}  \\ 
\text{\fbox{$M_6$}} \ar[u] &&    &&  
}
$$
where the boxed ones are frozen. Note that the above quiver can be obtained from the quiver in Example \ref{Ex: Cw} by deleting $M_2$, $M_4$ and freezing $M_1$, $M_5$. The mutation of $M_3$ is $\ang{3}$. 
\end{example}

The next proposition gives the relation 
between our seed of $\Cwv$ and the one of Leclerc.
Note that the the isomorphism class of $M_k$ ($k \in [1,r]^{\uw}_v$) 
in the proposition gives a prime element of 
the Grothendieck ring $K(\Cw)\vert_{q=1}$
(\cite{GLS13F}).

\begin{prop} \label{prop:SeedLeclerc}
Let $v \le w \in \weyl$ and let $\uw$ be a reduced expression of $w$.
Let
$\seed_{\uw, v} =  ( M_k)_{k\in [1,r]_v^{\uw}}$
be the seed of $\Cwv$  associated with $\uw$ and $v$.
\bnum
\item
Then there exists a family $\st{n_{j,k}}_{j,k\in  [1,r]_v^{\uw}}$ 
of non-negative integers such that
$$M(w_{\le k } \La_{i_k}, v_{\le k} \La_{i_k})\simeq\conv[{j\in [1,r]_v^{\uw}}]
\hs{1ex}M_j^{\circ n_{j,k}}
\qt{for any $k\in  [1,r]_v^{\uw}$}$$
and $n_{j,k}\in\Z_{\ge0}$ satisfy the unitriangular condition: 
$n_{j,k}=0$ for any $j>k$ and $n_{k,k}=1$.
\item In particular, we have
\begin{align*}
\seed_{\uw, v}& =  \set{ M_k}{k\in [1,r]_v^{\uw}}  \\
&= 
\set{\text{prime convolution factors of $\dM(w_{\le k} \La_{i_k}, v_{\le k} \La_{i_k} )$}}{ k\in [1,r]^{\uw}_v}.    
\end{align*}
Note that  the right hand side is the seed of Leclerc in \cite[Corollary 4.4]{Lec16} when $\g$ is of finite type $ADE$.
\ee
\end{prop}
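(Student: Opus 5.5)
We argue by induction on $r=\ell(w)$, peeling off the first letter $i=i_1$ exactly as in the inductive construction \eqref{Eq: S(k)} of $\seed_{\uw,v}=\seedk[1]$. Put $\uw'=(i_2,\ldots,i_r)$, a reduced expression of $w'=s_iw$, and $v'=v_{\ge2}$. Then $[1,r]^{\uw}_v\setminus\{1\}$ is the shift by one of $[1,r-1]^{\uw'}_{v'}$, and the $M_k$ for $k\ge2$ are obtained from the corresponding modules $M'_{k-1}$ of $\seed_{\uw',v'}$ by applying $\G_1$. By induction, each determinantial module $\dM(w'_{\le k-1}\La_{i_k},v'_{\le k-1}\La_{i_k})$ with $k-1\in[1,r-1]^{\uw'}_{v'}$ is a unitriangular convolution product of the $M'_j$. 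We then transport this decomposition by $\G_1$.

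\textbf{Case $c_1=\ft$} (so $s_iv<v$, $v'=s_iv$). Here $\F_i$ is a monoidal equivalence $\Cwv[s_iw,s_iv]\to\Cwv$ by Proposition~\ref{prop:stability}, so it carries convolution products of the $M'_j$ to convolution products of the $M_{j+1}$. We also have $\F_i\bl\dM(w'_{\le k-1}\La,v'_{\le k-1}\La)\br\simeq\dM(s_iw'_{\le k-1}\La,s_iv'_{\le k-1}\La)$, which is the same identity used in Proposition~\ref{prop: FK ex fr}, applied to the extremal weight pair. Since $w_{\le k}=s_iw'_{\le k-1}$ and $v_{\le k}=s_iv'_{\le k-1}$, the claim for $k\ge2$ follows, and $1\notin[1,r]_v$ so there is nothing more to check.

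\textbf{Case $c_1=\kt$} (so $v<s_iv$, $v'=v$). For $k=1$ we have $\dM(s_i\La_i,\La_i)=\ang{i}=M_1$. For $k\ge2$, $\K_i$ is not monoidal, but Lemma~\ref{Lem: KiM KiN} shows that on commuting products it is: $\K_i(X\conv Y)\simeq\K_i(X)\conv\K_i(Y)$. Applying it to the inductive decomposition gives a unitriangular product of the $M_{j+1}$ for $j+1\ge2$. It remains to compare $\K_i\bl\dM(w'_{\le k-1}\La,v_{\le k}\La)\br$ with $\dM(s_iw'_{\le k-1}\La,v_{\le k}\La)$, where $v_{\le k}=v'_{\le k-1}$ because $v$ is unchanged at step $1$.

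For this comparison we use Lemma~\ref{lem:iaKi} (or the trivial case $v_{\le k}\La=s_iw'_{\le k-1}\La$ when it fails to apply). It gives $\dM(w_{\le k}\La,v_{\le k}\La)\simeq\ang{i^{n}}\conv\K_i\bl\dM(w'_{\le k-1}\La,v_{\le k}\La)\br$. To use it we must verify its hypotheses, namely $s_iw_{\le k}<w_{\le k}$ (clear, since $i_1=i$) and $v_{\le k}<s_iv_{\le k}$. The extra factor $\ang{i}^{\circ n}=M_1^{\circ n}$ involves only $j=1<k$, so the triangularity is preserved and the diagonal coefficient $n_{k,k}=1$ is untouched. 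This yields (i).

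For (ii), each $M_k$ is prime. This comes from Lemma~\ref{Lem: primeness}, since at every $\K$-step we have $\ep_i=0$ on $\Cw[s_iw]$, together with $\F_i$ being an equivalence and induction. By (i), every prime factor of $\dM(w_{\le k}\La_{i_k},v_{\le k}\La_{i_k})$ lies in $\seed_{\uw,v}$, because convolution products of commuting affreal modules factor uniquely (\cite[Theorem 4.1]{KK19}). Conversely, unitriangularity shows that $M_k$ itself occurs as a factor of the $k$-th module.

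The step I expect to be the main obstacle is checking the hypothesis $v_{\le k}<s_iv_{\le k}$ of Lemma~\ref{lem:iaKi}, that is, controlling the Bruhat relations of the prefixes $v_{\le k}$ inside $w_{\le k}$. The first thing I would try is an induction on the construction of $v_{\ge k}$, using that $v_{\le k}$ is a reduced subword product of $s_{i_1}\cdots s_{i_k}$ obtained greedily from the left.
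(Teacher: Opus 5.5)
Your proposal is correct and follows the paper's proof essentially step for step. It uses induction on $\ell(w)$ by removing $i_1$, transports the decomposition through the monoidal equivalence $\F_i$ when $s_iv<v$, and through Lemma~\ref{lem:iaKi} together with the multiplicativity of $\K_i$ on commuting products (Lemma~\ref{Lem: KiM KiN}) when $s_iv>v$, then reads off (ii) from unitriangularity and \cite{KK19}. The hypothesis you flag as the main obstacle is immediate (the paper does not even spell it out): writing $v_{\ge t+1}=s_{i_t}^{\epsilon_t}v_{\ge t}$ with $\epsilon_t\in\{0,1\}$, each step lowers the length by $\epsilon_t$, so $v_{\le k}=s_{i_1}^{\epsilon_1}\cdots s_{i_k}^{\epsilon_k}$ with $\ell(v)=\ell(v_{\le k})+\ell(v_{\ge k+1})$. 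Hence a left descent $s_i$ of $v_{\le k}$ would be a left descent of $v$, contradicting $s_iv>v$; and $v_{\le k}<w_{\le k}$ holds strictly because $\epsilon_1=0$, so $v_{\le k}$ is a reduced subword of $s_{i_1}\cdots s_{i_k}$ omitting the first letter.
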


\begin{proof}
Note that (ii) immediately follows from (i)
and \cite[Theorem 4]{KK19}. We prove (i)  by induction on $\ell(w)$.
Assume that $\uw=(i_1,i_2,\ldots, i_r)$. Set $i=i_1$ and
 $\uw'=(i_1',i_2',\ldots, i'_{r-1})=(i_2,i_3,\ldots, i_r)$.  Then 
$\uw'$ is a reduced expression of $w'\seteq s_iw < w$.
Set $v'=v^{\uw}_{\ge 2}$.
Let $\sfc=(c_k)_{k\in[1,r]}$  be the $KF$-sequence for $\uw$ and $v$, and
let $\sfc'=(c_k')_{k\in[1, r-1]}$  be the $KF$-sequence for $\uw'$ and $v'$.
Then $c_{k+1}=c'_k$ for $k\in[1,r-1]$.
Let $$\seed_{\uw', v'} :=  ( M'_k)_{k\in [1,r-1]_{v'}^{\uw'}}
\qtq 
\seed_{\uw, v} :=  (M_k)_{k\in [1,r]_{v}^{\uw}}.$$

\snoi
{\bf Case 1 : } Assume that $s_iv < v$.
Then $s_iv < s_iw $, $c_1=\ft$, 
$w_{\le k+1}=s_iw'_{\le k}$ and $v_{\le k+1}=s_iv'_{\le k}$. and
$$[1,r]_{v}^{\uw}=\st{k+1\mid k\in [1,r-1]_{v'}^{\uw'}}\qtq
M_{k+1}=\F_i(M'_k).$$

Since
$$\dM(w_{\le k+1}\La_{i_{k+1}},v_{\le k+1}\La_{i_{k+1}})
\simeq \F_i\bl\dM(w'_{\le k}\La_{i'_{k}},v'_{\le k}\La_{i'_k})\br,$$
we have the desired result.

\mnoi
{\bf Case 2 : } Assume that $s_iv > v$.
Then $v'=v$, $\sfc_1=\kt$, and
$$[1,r]^{\uw}_v = \{1\} \sqcup \set{k+1}{k\in  [1,r-1]^{\uw'}_{v}}. $$
Hence we have 
$$M_1=\ang{i}\qtq
M_{k+1}\simeq\K_i(M'_k)\qt{for $k\in[1,r-1]^{\uw'}_v$}$$

Set $$L_k=\dM(w_{\le k } \La_{i_{k}}, v^{\uw}_{\le k} \La_{i_{k}})
\qtq L'_k=\dM(w'_{\le k } \La_{i'_{k}}, v'^{\uw'}_{\le k} \La_{i'_{k}}).$$
Then we have
$L_1=M_1$,   and $L_{k+1} \simeq \ang{i}^{n}\conv \K_i(L'_k)$ for some $n\in \Z_{\ge0}$ by Lemma \ref{lem:iaKi}.  
Hence if $L'_k=\conv[{j\in[1,r-1]^{\uw'}_{v}}]
\hs{.5ex}(M'_j){}^{\circ n_{j,k}}$,
then we have 
\eqn
L_{k+1}\simeq
\ang{i}^{\circ n}\conv\K_i(L'_{ k})
\simeq \ang{i}^{\circ n}\conv\conv[{j\in[1,r-1]^{\uw'}_{v'}}]
\hs{.5ex}  (\K_i M'_{j}){}^{\circ n_{j,k}}
\simeq M_1^{\circ n}\conv[{j\in[1,r-1]^{\uw'}_{v'}}]
\hs{.5ex}(M_{j+1}){}^{\circ n_{j,k}}. \qedhere
\eneqn
\end{proof}

\vskip 2em 

\section{Mutation invariance of the seeds $\seed_{\uw, v}$} \label{Sec: mutation invariance}

In this section, we prove that the mutation class of $\seed_{\uw, v}$ does not depend on the choice of reduced expressions $\uw$ of  $w\in\weyl$. 

Let $w,v\in\weyl$ such that $v\le w$, and choose and fix a reduced sequence
$$
\uw=(i_1,i_2,\ldots,i_r)
$$ 
throughout this section. 
We consider the seed $\wseed_{\uw,\kfc} $ for $\Cw$ and the seed $\seed_{\uw, v}$ for $\Cwv$, where  
$\kfc=(c_1,\ldots, c_r)\in \st{\kt,\ft}^r$
is defined in \eqref{Eq: ct for T}. 
We keep all notations appearing in Section \ref{Sec:wvseed}, and  simply write 
$$
\seed_{\uw} \seteq  \seed_{\uw, v}
$$
in this section. We remark that the mutation class of $\wseed_{\uw,\kfc}$
does not depend on the choice of $ \kfc\in\st{\kt,\ft}^r$ by Proposition \ref{prop: Twc}.
 Recall that for a set $A$, 
the symmetric group $ \sg_{r} = \langle \sgr_k \mid k\in [1,r-1]  \rangle$ 
acts on $A^{r}$ by the place permutation, i.e., 
for  $\bfa = (a_1, a_2, \ldots,a_r) \in  A^r$,
$$
\sgr_k  \bfa \seteq  (a_{ 1}, \ldots,  a_{k-1}, a_{k+1}, a_{k}, a_{k+2}, \ldots,a_{ r}),
$$
where $\sgr_k = (k, k+1)$ is the $k$-th transposition in $\sg_{r}$.

\subsection{Commutation equivalence}
In this subsection, we show that $\seed_{\uw, v}$ and $\seed_{\uw', v}$ 
belong to the same mutation class when $\uw'$ is obtained from $\uw$ by a single commutation move.

\Lemma \label{Lem: comm for K F}
Let $i,j\in I$ such that $(\al_i, \al_{j})=0$. For a simple module $M$ such that $\al_i, \al_j \not\in \gW(M)$, we have 
\bnum
\item \label{it: KiKj} $\K_i \K_j (M) \simeq \K_j \K_i (M)$,
\item \label{it: KiFj} $\K_i \F_j (M) \simeq \F_j \K_i (M)$,
\item  \label{it: FiFj} $\F_i \F_j (M) \simeq \F_j \F_i (M)$.
\ee
\enlemma
\begin{proof}
It is easy to see that
$\K_i(\ang{j}\hconv M)\simeq\ang{j}\hconv\K_i(M)$.

\snoi
\eqref{it: KiKj} Since $\ang{i} \conv \ang{j} \simeq \ang{j} \conv \ang{i} $ is simple and  $\de_i(M) = \de_i ( \ang{j}^k\hconv M)$ for any $k \ge 0$, we have $\K_i \K_j (M) \simeq \K_j \K_i (M)$ by Definition \ref{Def: Ki and K*i}.

\snoi
\eqref{it: KiFj} By Proposition \ref{prop:Refwv}, we have $ \F_j (\ang{i}^k\hconv M) \simeq \ang{i}^k\hconv \F_j(M)$ and 
$$
\de_i(M) = \de(\ang{i}, M) =  \de(\F_j (\ang{i}), \F_j(M)) = \de(\ang{i}, \F_j(M)) =  \de_i(\F_j(M)),
$$  
which implies~\eqref{it: KiFj}.

\snoi
\eqref{it: FiFj} follows from the fact that the functors $\F_i$ satisfy the braid relations (\cite{Kato20, KKOP24D}).
\end{proof}

Let $a\in[1,r-1]$ and assume that 
$$
(\al_{i_a},\al_{i_{a+1}})=0.
$$ 
We denote by $\kfc=(c_1,\ldots, c_r)\in \st{\kt,\ft}^r$ the 
KF sequence associated with $\uw$ and $v$, and by $\kfc'$ the 
KF sequence associated with $ \sgr_a\uw$ and $v$ (see \eqref{Eq: ct for T}).
We set $\Kt \seteq  [1,r]_v^{\uw}$ and $\Kt' \seteq  [1,r]_v^{\sgr_a\uw}$, and write 
\begin{align*}
\seed_{\uw} \seteq  ( M_k)_{k\in \Kt} 
\qtq \seed_{\sgr_a \uw} \seteq ( M_k' ) _{k\in \Kt'}.
\end{align*}

\Prop \label{Prop: comm}
We have 
$$
\kfc' = \sgr_a \kfc \qtq \seed_{\sgr_a \uw} = \seed_{\uw} \quad \text{ as a set.}
$$ 
Precisely, we have  
$ M_k' = M_k $ for $k \in \Kt \setminus \{a, a+1 \} $ and 
\bnum
\item if $ (c_a, c_{a+1}) = (\kt, \kt)$, then $ \Kt' = \Kt$ and $M_{b}' = M_{c} $ where $\{ b,c \} = \{a, a+1 \}$,
\item if $ (c_a, c_{a+1}) = (\ft, \kt)$, then $ \Kt' = (\Kt\setminus\{ a+1\}) \cup \{ a\}$ and $M_{a}' = M_{a+1} $,
\item if $ (c_a, c_{a+1}) = (\ft, \ft)$, then $ \Kt' = \Kt$.
\ee
\enprop
\begin{proof}
It follows by applying Lemma \ref{Lem: comm for K F} to the definition of $\seed_{\uw}$.
\end{proof}

\subsection{Braid equivalence}

In this subsection, we show that $\seed_{\uw, v}$ and $\seed_{\uw', v}$ 
belong to the same mutation class when $\uw'$ is obtained from $\uw$ by a single braid move.
Recall that we assume that the Cartan matrix $\sfC$ is symmetric. 

\Lemma \label{Lem: KiKi}
Let $i,j\in I$ such that $(\al_i,\al_j)=-1$.
Let $M$ be a simple module such that $\al_i,\al_j\not\in\gW(M)$, and 
set 
$$
m \seteq \de_i(M) \qtq n\seteq \de_j(M).
$$
Then we have 
\bnum
\item \label{Lem: KiKi (i)}
$\de_j(\ang{i^c}\hconv M)=c+\de_j(M)$ for any $c\in\Z_{\ge0}$.
In particular, we have $\de_j(\K_iM)=\de_i(M)+\de_j(M)$,
\item \label{Lem: KiKi (ii)}
$\K_j\K_iM\simeq \hd\bl\ang{ji}^{\circ m}\conv\ang{j^n}\conv M\br
\simeq\hd\bl\ang{j^{m+n}}\conv\ang{i^m}\conv M\br
\simeq\ang{ji}^{\circ m}\hconv\K_jM$,
\item \label{Lem: KiKi (iii)}
$\de_i(\K_j\K_iM)=\de_j(M)$,
\item \label{Lem: KiKi (iv)}
$\E_i(\K_jM)\simeq0$ and
$\de_i(\F_j\K_iM)=\de(\ang{ij},\K_iM)=\de_j(M)$,
\item \label{Lem: KiKi (v)}
$\E_i(\F_j\K_iM)\simeq0$ and $\de_j(\F_i\F_jM)=\de_i(M)$.

\ee
\enlemma
\Proof
\eqref{Lem: KiKi (i)}\ Since $(\ang{j}, M)$ and $(\ang{i^c},\ang{j})$ are unmixed pairs for $c \in \Z_{\ge0}$, we have
\eqn
\La(\ang{j},\ang{i^c}\hconv M)&&=\La(\ang{j},\ang{i^c})+
\La(\ang{j}, M)\qtq\\
\La(\ang{i^c}\hconv M,\ang{j})&&=\La(\ang{i^c},\ang{j})+\La(M,\ang{j})
\eneqn
by \cite[Corollary 2.13]{KKOP23A} and \cite[Lemma 2.7 and 2.8]{KK19}.
Hence $\de_j(\ang{i^c}\hconv M)=\de_j(\ang{i^c})+\de_j(M)$.

\snoi
\eqref{Lem: KiKi (ii)}\ Since $(\ang{j}, M)$ is an unmixed pair,
$\ang{j^{m+n}}\conv\ang{i^m}\conv M$ has a simple head.
By (i), we have
\begin{equation} \label{Eq: KjKiM}
\begin{aligned}
\K_j\K_iM &\simeq 
\ang{j^{m+n}}\hconv(\ang{i^m}\hconv M)
\simeq \hd\bl\ang{j^{m+n}}\conv\ang{i^m}\conv M\br \\
&\simeq\hd\bl\ang{ji}^{\circ m}\conv\ang{j^n}\conv M\br \\ 
&\simeq 
\ang{ji}^{\circ m}\hconv\K_jM.
\end{aligned}
\end{equation}

\snoi
\eqref{Lem: KiKi (iii)}\ We have 
\begin{align*}
\La(\ang{i},\K_j\K_i M)
&=\La\bl\ang{i},\ang{ji}^{\circ m}\hconv(\ang{j^n}\hconv M)\br\\
&=\La\bl\ang{i},\ang{ji}^{\circ m})+
\La\bl\ang{i},\ang{j^n}\hconv M\br\\
&= \La\bl\ang{i},\ang{ji}^{\circ m})+
\La\bl\ang{i},\ang{j^n})+\La(\ang{i},M)\\
&=-m+n+\La(\ang{i},M),
\end{align*}
where 
the first equality follows from \eqref{Eq: KjKiM},
the second and third follow from the fact that $(\ang{i}, \ang{j^n}\hconv M )$ and $(\ang{i}, M )$ are unmixed,
and the fourth follows from $\La( \ang{i}, \ang{ji})=-1$ and $\La( \ang{i}, \ang{j})=1$.
Similarly, we have 
\begin{align*}
\La(\K_j\K_i M,\ang{i}) &= \La(\ang{j^{m+n}}\hconv\K_i M,\ang{i})
=\La(\ang{j^{m+n}},\ang{i})+\La(\K_i M,\ang{i})\\
&=m+n-\La(\ang{i},\K_iM)\\
&=m+n-\La(\ang{i},M),
\end{align*}
where 
the first equality follows from \eqref{Eq: KjKiM},
the second follows from the fact that $(\ang{j}, \ang{i})$ is unmixed,
the third follows from $\La(\ang{j}, \ang{i})=1$ and $\de(\ang{i}, \K_i M)=0$,
and the fourth follows from $\La(\ang{i}, \K_i M) = \La(\ang{i}, \ang{i^m} \hconv M) = \La(\ang{i}, M)$.
Hence, we have
$$2\de_i(\K_j\K_iM)=
\bl-m+n+\La(\ang{i},M)\br+\bl m+n-\La(\ang{i},M)\br=2n.$$

\snoi
\eqref{Lem: KiKi (iv)}\ 
Since $ \al_i \not \in \gW (\ang{j}\conv M ) $, we have  $\E_i(\K_jM)\simeq0$.
 
By the same manner as above, we have
\eqn
\La(\ang{ij},\K_iM)&&=
\La\bl\ang{ij},\ang{i}^m\hconv M\br\\
&&=
\La\bl\ang{ij},\ang{i}^m\br+\La\bl\ang{ij},M\br\\
&&=
-m+\La(\ang{i},M)+\La(\ang{j},M)
\eneqn
and
\eqn
\La(\K_iM,\ang{ij})&&=
\La(\K_iM,\ang{i})+\La(\K_iM,\ang{j})\\
&&=-\La(\ang{i}, \K_iM)+\La(\ang{i^m},\ang{j})+\La(M,\ang{j})\\
&&=-\La(\ang{i}, M)+m+\La(M,\ang{j}).
\eneqn
Hence, by Proposition \ref{prop:Refwv}, we have 
\begin{align*}
2\de(\ang{i},\F_j\K_iM) &=2\de\bl\F_j^{-1}\ang{i},\K_iM\br
=2\de(\ang{ij},\K_iM) \\ 
&=\bl-m+\La(\ang{i},M)+\La(\ang{j},M)\br+
\bl-\La(\ang{i}, M)+m+\La(M,\ang{j})\br\\
&=\La(\ang{j},M)+\La(M,\ang{j}) \\
&= 2\de_j(M).
\end{align*}

\snoi
\eqref{Lem: KiKi (v)}\ By \cite[Corollary 3.8]{KKOP18}, we have 
\begin{align*}
\ep_i(\F_j\K_i M) &= \tLa(\ang{i},\F_j\K_i M) =\tLa(\F_j^{-1}\ang{i},\K_iM) =\tLa(\ang{ij}, \ang{i^m}\hconv M)\\
&
=\tLa(\ang{ij}, \ang{i^m})+\tLa(\ang{ij}, M)\\
&=0,
\end{align*}
where the last equality follows from the fact that $(\ang{ij}, \ang{i} )$ and $(\ang{ij}, M)$ are unmixed.
This tells us that $\E_i (\F_j\K_iM) \simeq0$.

Since $\ang{j}=\F_i\F_j\ang{i}$, 
we have
\[ \de\bl\ang{j},\F_i\F_jM\br
=\de\bl \F_i\F_j\ang{i},\F_i\F_jM\br
=\de\bl \ang{i},M\br. \qedhere \]
\QED

\Cor\label{cor:KKK}
Let $i,j\in I$ such that $(\al_i,\al_j)=-1$.
Let $M$ be a simple module such that $\al_i,\al_j\not\in\gW(M)$.
Then, we have
\bnum
\item \label{it:KKK}
$\K_i\K_j\K_iM\simeq\K_j\K_i\K_jM$,
\item \label{it:FKK}
$\F_i\K_j\K_iM\simeq \K_j\F_i\K_jM$,
\item \label{it:FFK}
$\F_i\F_j\K_iM\simeq\K_j\F_i\F_jM$.
\item \label{it:FFF}
$\F_i\F_j\F_iM\simeq\F_j\F_i\F_jM$.
\ee
\encor
\Proof
Set $m\seteq \de_i(M)$ and $n \seteq \de_j(M)$.

\snoi
\eqref{it:KKK}\ Since $\ang{ji}$ commutes with $\ang{i}$ and $\ang{j}$,
 Lemma \ref{Lem: KiKi} \eqref{Lem: KiKi (ii)} says that 
$$\K_j\K_iM\simeq
(\ang{j^{m+n}}\hconv\ang{i^m})\hconv M
\simeq 
(\ang{j^n}\conv\ang{ji}^{\circ m})\hconv M,$$
which implies 
\eqn \K_i\K_j\K_i M&&\simeq
\ang{i^n}\hconv\K_j\K_iM\\
&&\simeq \ang{i^n}\hconv\bl(\ang{j^n}\conv\ang{ji}^{\circ m})\hconv M\br\\
&&\simeq \bl\ang{i^n}\hconv(\ang{j^n}\conv\ang{ji}^{\circ m})\br\hconv M\\
&&\simeq \bl\ang{ij}^{\circ n}\conv\ang{ji}^{\circ m})\br\hconv M,
\eneqn
where the first equality follows from  Lemma \ref{Lem: KiKi} \eqref{Lem: KiKi (iii)}. 
Exchanging the roles of $i$ and $j$ in the above, we also obtain
$$
\K_j\K_i\K_j M\simeq\bl\ang{ji}^{\circ m}\conv\ang{ij}^{\circ n})\br\hconv M.
$$
The assertion follows from the fact that $\ang{ij}$ commutes with $\ang{ji}$.

\snoi
\eqref{it:FKK}\ By Lemma \ref{Lem: KiKi} \eqref{Lem: KiKi (ii)},
we have
\eqn
\F_i\K_j\K_iM&&\simeq\F_i\bl\ang{ji}^{\circ m}\hconv\K_jM\br\\
&&\simeq\F_i(\ang{ji}^{\circ m})\hconv(\F_i\K_jM)\\
&&\simeq\ang{j^{m}}\hconv\F_i\K_jM\\
&&\simeq\K_j\F_i\K_jM,
\eneqn
where the last equality follows by exchanging the roles of $i$ and $j$ in Lemma \ref{Lem: KiKi}  \eqref{Lem: KiKi (iv)}. 

\snoi
\eqref{it:FFK}\ By Lemma \ref{Lem: KiKi} \eqref{Lem: KiKi (v)}, we have
\eqn
\F_i\F_j\K_iM
&&\simeq\F_i\F_j\bl\ang{i^m}\hconv M\br\\
&&\simeq\F_i\F_j\ang{i^m}\hconv \F_i\F_j M\\
&&\simeq\ang{j^m}\hconv \F_i\F_j M\\
&&\simeq\K_j\F_i\F_j M.
\eneqn

\snoi
\eqref{it:FFF} It follows from \cite{Kato20, KKOP24D}.
\QED

Let $a\in[1,r-2]$ and assume that 
\begin{align} \label{Eq: cond for i}
 i_a = i_{a+2} \qtq  (\al_{i_a},\al_{i_{a+1}})=-1.
\end{align}
We set $\uw'$ to be the reduced expression obtained from $\uw$ by applying a single braid move at the $a+1$-position, i.e.,   
$$
\uw' = (i_1, \ldots, i_{a-1}, i_{a+1}, i_{a}, i_{a+1},i_{a+3}, \ldots, i_r  ).
$$
We denote by $\kfc=(c_1,c_2,\ldots, c_r)\in \st{\kt,\ft}^r$ the 
KF sequence associated with $\uw$ and $v$, and by $\kfc' =(c_1',c_2',\ldots, c_r')$ the 
KF sequence associated with $ \uw'$ and $v$.
We set $\Kt \seteq  [1,r]_v^{\uw}$ and $\Kt' \seteq  [1,r]_v^{\uw'}$, and write 
\begin{align*}
\seed_{\uw} \seteq ( M_k )_{k\in \Kt} 
\qtq \seed_{\uw'} \seteq( M_k')_{k\in \Kt'}.
\end{align*}
Note that, by \eqref{Eq: cond for i} and the definition of $\kfc$ (see \eqref{Eq: ct for T}), the triple
$(c_{a}, c_{a+1}, c_{a+2})$ can be neither $(\kt, \kt, \ft)$ nor $(\ft, \kt, \ft)$.

\Prop\label{prop:Inv} 
For $ k \in [1,r] \setminus \{ a, a+1, a+2 \}$ and $t \in \Kt \setminus \{ a,a+1,a+2 \}$, we have 
$$
c_k' = c_k \qtq M_k' = M_k.  
$$
Moreover, we have the following.
\bnum
\item \label{it: ckck' (i)}
If $( c_a, c_{a+1}, c_{a+2}) = (\kt, \kt, \kt)$, then 
$( c_a', c_{a+1}', c_{a+2}') = (\kt, \kt, \kt)$, $ \Kt' = \Kt$, and 
$$
M_a' = \mu_a(M_a), \quad M_{a+1}' = M_{a+2},\quad   M_{a+2}' = M_{a+1},
$$
which means $\sgr_{a+1}(\seed_{\uw'}) = \mu_a(\seed_{\uw})$.

\item \label{it: ckck' (ii)}
If $( c_a, c_{a+1}, c_{a+2}) = (\ft, \kt, \kt)$, then $( c_a', c_{a+1}', c_{a+2}') = (\kt, \ft, \kt)$, $\Kt' = \Kt \cup\{ a \} \setminus\{ a+1\}$, and 
$$
M_a' =  M_{a+2} \qtq  M_{a+2}' = M_{a+1},
$$
which means $ \seed_{\uw'} =  \seed_{\uw}$ as a set.

\item \label{it: ckck' (iii)}
If $( c_a, c_{a+1}, c_{a+2}) = (\ft, \ft, \kt)$, then $( c_a', c_{a+1}', c_{a+2}') = (\kt, \ft, \ft)$, $\Kt' = \Kt \cup\{ a \} \setminus\{ a+2\}$, and 
$$
M_a' =  M_{a+2},
$$
which means $ \seed_{\uw'} =  \seed_{\uw}$ as a set.

\item \label{it: ckck' (iv)}
If $( c_a, c_{a+1}, c_{a+2}) = (\ft, \ft, \ft)$, then $( c_a', c_{a+1}', c_{a+2}') = (\ft, \ft, \ft)$, $\Kt' = \Kt $, and $ \seed_{\uw'} =  \seed_{\uw}$.
\ee
\enprop

\begin{proof}

Set $\wseed \seteq \wseed_{\uw, \kfc}=(M_k)_{k\in[1,r]}$ and $\wseed' \seteq \wseed_{\uw',\kfc'}=(M'_k)_{k\in[1,r]}$, and define 
$$
(N_k)_{k\in[1,r]} \seteq 
\bc \mu_a(\wseed)&\text{in case (i) and (ii),}\\
\wseed&\text{in case (iii).}
\ec
$$
As in Section \ref{subsec:exp}, 
we write $S_k \seteq  \ang{i_k}$ if $c_k=\kt$ and
$S_k \seteq \dM(w_{\ge k}\La_{i_k},\La_{i_k})$ if $c_k=\ft$ so that 
$$
M_k= \G_1\G_2\cdots \G_{k-1} (S_k),
$$
where $\G_k=\K_{i_k}$ if $c_k=\kt$ and
$\G_k=\F_{i_k }$ if $c_k=\ft$.

We may assume that $a=1$. Write 
$$
w'\seteq s_{i_3}\cdots s_{i_r}, \quad i \seteq i_1 \qtq j \seteq i_2,
$$
so that $w=s_is_js_iw'$. It is obvious that $c_k' = c_k$ for $ k >3$. Since $M = \G_4\cdots \G_{k-1}(S_k)$ satisfies $\E_{i}M=\E_{j}M=0$,
Corollary~\ref{cor:KKK} implies $N_k=M_k = M'_k$ for $k>3$.

\mnoi
\eqref{it: ckck' (i)}\ Assume $ (c_1,c_2,c_3) = (\kt,\kt,\kt)$. In this case, it is clear that $ (c_1',c_2', c_3' ) =  (\kt,\kt,\kt)$ and $\Kt = \Kt'$.
By a direct computation, we have
\begin{align*}
M_1=\ang{i}, \qquad  M_2=\K_i\ang{j}\simeq\ang{ij}, \qquad M_3=\K_i\K_j\ang{i}\simeq\ang{ji}, \\
M'_1=\ang{j}, \qquad M'_2=\K_j\ang{i}\simeq\ang{ji}, \qquad M'_3=\K_j\K_i\ang{j}\simeq\ang{ij}.
\end{align*}
From the exact sequence
$0\to\ang{ji}\to\ang{i}\conv\ang{j}\to\ang{ij}\to0$,
we have
$N_1\simeq\ang{j}$, $N_2\simeq\ang{ij}$ and $N_3\simeq\ang{ji}$ 
by Lemma \ref{Lem: criterion for cv} \eqref{Lem: criterion for cv (ii)}.
 
Thus we have 
$$
\mu_1(\wseed)= \sgr_{2} (\wseed'),
$$
which implies the assertion.

\snoi
\eqref{it: ckck' (ii)}\ Assume $ (c_1,c_2,c_3) = (\ft,\kt,\kt)$. In this case, 
we have $s_i v < v$ and $s_j s_i v > s_iv$, which says $ (c_1',c_2', c_3' ) =  (\kt,\ft,\kt)$ and $\Kt' = \Kt \cup\{ a \} \setminus\{ a+1\}$.
By a direct computation, we have
\begin{align*}
M_1&=\dM(w\La_i,\La_i),   &&M_2=\F_i\ang{j}\simeq\ang{ij}, \quad  M_3=\F_i\K_j\ang{i}
\simeq\ang{j}, \text{ and } \\ 
M'_1&=\ang{j},  &&M'_2=\K_j\dM(s_is_jw'\La_i,\La_i)\simeq\dM(s_js_is_jw'\La_i,\La_i)\simeq \dM(w\La_i,\La_i), \\ 
M'_3 &=\K_j\F_i\ang{j}\simeq\ang{ij}.
\end{align*}
Hence we have 
$
\wseed= \wseed'
$
as a set, which implies the assertion.

\snoi
\eqref{it: ckck' (iii)}\ 
Assume $ (c_1,c_2,c_3) = (\ft,\ft,\kt)$. In this case, 
we have $ s_js_iv < s_i v < v$ and $ s_j s_i v < s_i s_j s_i v $, which says $ (c_1',c_2', c_3' ) =  (\kt,\ft,\ft)$ and $\Kt' = \Kt \cup\{ a \} \setminus\{ a+2\}$.
By a direct computation, we have
\begin{align*}
M_1 & =\dM(w\La_i,\La_i), \qquad  M_2=\F_i\dM(s_js_iw'\La_j,\La_j)
\simeq \dM(w\La_j,\La_j), \\ 
M_3 &=\F_i\F_j\ang{i}
\simeq\ang{j}, \\
M'_1 & =\ang{j}, \qquad M'_2=\K_j\dM(s_is_jw'\La_i,\La_i)\simeq\dM(s_js_is_jw'\La_i,\La_i)\simeq \dM(w\La_i,\La_i), \\ 
M'_3 & =\K_j\F_i\dM(s_jw'\La_j,\La_j)
\simeq\dM(s_js_is_jw'\La_j,\La_j)\simeq\dM(w\La_j,\La_j)
\end{align*}
Hence we have 
$\wseed= \wseed'$ as a set,  which implies the assertion.

\snoi
\eqref{it: ckck' (iv)} Assume $ (c_1,c_2,c_3) = (\ft,\ft,\ft)$. In this case, it is obvious by the definition of $\seed_{\uw}$.
\end{proof}

We now have the main theorem of this section from Proposition \ref{Prop: comm} and \ref{prop:Inv} . 

\begin{thm} \label{thm: one mutation class for Cwv}
Let $w, v \in \weyl$ with $v \le w$. Then the mutation class of the seed $\seed_{\uw, v} $ of $ \Cwv$ does not depend on the choice of  reduced expressions $\uw$ of $w$. 
\end{thm}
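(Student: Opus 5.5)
The plan is to reduce the statement to the two elementary moves, using Matsumoto's theorem (Tits' word property). Any two reduced expressions $\uw$ and $\uw''$ of $w$ are connected by a finite chain of reduced expressions, each obtained from the previous one either by a commutation move $\uw\mapsto\sgr_a\uw$ with $(\al_{i_a},\al_{i_{a+1}})=0$, or by a braid move at three consecutive positions with $i_a=i_{a+2}$. Since $R$ is symmetric and the Cartan matrix is symmetric, every braid move with $m_{ij}=3$ has $(\al_i,\al_j)=-1$, which is exactly condition \eqref{Eq: cond for i}. Mutation equivalence is transitive, so it suffices to show that one move does not change the mutation class of $\seed_{\uw,v}$.

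For a commutation move, Proposition \ref{Prop: comm} gives $\seed_{\sgr_a\uw}=\seed_{\uw}$ as a set, with the explicit reindexing of $\Kt$ and $\Kt'$. The exchangeable and frozen parts are intrinsic to the modules. By Lemma \ref{Lem: frozen Jfr}, $k$ is frozen exactly when $M_k$ satisfies \eqref{cond:frozen (b)}, and that condition depends only on $M_k$ and $\Cwv$. As noted after Definition \ref{Def: def MC}, the exchange matrix is determined by $\{M_s\}$ through $\La$, weights, and the ex/fr decomposition \cite[Lemma 3.2]{KK19}. Hence the two seeds coincide up to a permutation of indices.

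For a braid move, Proposition \ref{prop:Inv} covers the four admissible patterns of $(c_a,c_{a+1},c_{a+2})$. It also notes that $(\kt,\kt,\ft)$ and $(\ft,\kt,\ft)$ cannot occur, and one should check that these four patterns exhaust the remaining possibilities. In cases (ii)--(iv) the sets agree, so the argument above applies. In case (i), $\sgr_{a+1}(\seed_{\uw'})=\mu_a(\seed_{\uw})$ as families, with $\Kt'=\Kt$.

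The main obstacle is case (i). There the identity is established inside the seed $\wseed$ of $\Cw$, and one must confirm that it is a mutation of the seed $\seed_{\uw,v}$ of $\Cwv$ and not only of the ambient seed. First, $a\in\Kt$ must be exchangeable in $\seed_{\uw,v}$. This holds because $M_a\not\simeq M'_a$: if $a$ were frozen, $M_a$ would be central in $\Cwv$ and would equal $M'_a$ by the intrinsic characterization above. Second, by Theorem \ref{thm: Swv for Cwv}, $\seed_{\uw,v}$ arises by restriction as in Lemma \ref{lem:freez}, with $\tB|_{([1,r]\setminus\Kt)\times\Kt^{\ex}}=0$. So mutating $\wseed$ at $a$ and restricting to $\Kt$ gives the mutation of $\seed_{\uw,v}$ at $a$, and the mutated module lies in $\Cwv$, as in the proof of Lemma \ref{lem:freez}. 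Combining this with Lemma \ref{Lem: criterion for cv}\eqref{it: seedprime}, we conclude that $\seed_{\uw',v}$ and $\seed_{\uw,v}$ are related by one mutation and a permutation. Chaining the moves proves the theorem.
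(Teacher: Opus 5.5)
Your proposal is correct and follows the paper's route. Like the paper, you reduce via Matsumoto's theorem to single commutation and braid moves and invoke Propositions \ref{Prop: comm} and \ref{prop:Inv}, and your extra care in case (i) fills in what the paper leaves implicit: exchangeability of $a$ in $\seed_{\uw,v}$ (most directly because $\mu_a(M_a)\in\Cwv$ and $\de(M_a,\mu_a(M_a))=1$, so $M_a$ is not central), and the vanishing block $\tB|_{([1,r]\setminus\Kt)\times\Kt^{\ex}}=0$. One small correction to the check you flag: the four listed patterns do not exhaust the possibilities, since $(\kt,\ft,\kt)$ and $(\kt,\ft,\ft)$ also occur (e.g.\ $w=s_is_js_i$ with $v=s_j$ or $v=s_js_i$); these are exactly cases (ii) and (iii) read with $\uw$ and $\uw'$ interchanged, so they are covered by symmetry.
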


\vskip 2em 

\section{Cluster algebras structure for $\Cwv$} \label{Sec: cluster alg str of Cwv}

In this section, we discuss the cluster algebra structure of $\Cwv$.

Let $w , v \in \weyl$ with $v \le w$. 
Let $\uw = (i_1,i_2, \ldots, i_r)$ be a reduced expression of $w$, and set $\sfJ \seteq  [1,r]$.
We consider the completely admissible seeds
$$
\text{$\wseed_{\uw,\kfc} = \{ M_k \}_{k\in \sfJ}$ of $\Cw$ and
$\seed_{\uw, v} = \{ M_k \}_{k\in \sfJ_{\uw, v}}$ of $ \Cwv$}
$$ 
given by \eqref{Eq: TM}, \eqref{Eq: ct for T} and \eqref{Eq: def of Swv},
 respectively (see Propositions \ref{prop: Twc} and Theorem~\ref{thm: Swv for Cwv}).
Thanks to Proposition \ref{prop: Twc} and Theorem \ref{thm: one mutation class for Cwv}, the mutation classes of $\wseed_{\uw,\kfc}$ and  $\seed_{\uw, v}$ do not depend  on  the choice of $\uw$.

Let $\sfJ^\ex$ and $\sfJ^\fr$ be the index sets for $\wseed_{\uw,\kfc}$ given in \eqref{Eq: index for Cw}, and let $\sfJ_{\uw, v}^{\ex} $ and $\sfJ_{\uw, v}^{\fr} $ be the index sets for $\seed_{\uw, v}$ given in \eqref{Eq: index for Cwv}.
Let $ \tB_{\uw, \kfc} = (b_{i,j})_{i\in \sfJ, j \in \sfJ^\ex}$ be the exchange matrix determined by $\wseed_{\uw,\kfc}$ with $ \sfJ = \sfJ^\ex \sqcup \sfJ^\fr$,and set 
$$
\tB \seteq  \tB_{\uw, \kfc} |_{ \sfJ_{\uw, v} \times \sfJ_{\uw, v}^\ex}.  
$$
Note that $\tB$ is the exchange matrix for $\seed_{\uw, v}$.
We set 
$\La \seteq  \bl\La(M_i, M_j )\br)_{i,j\in \sfJ_{\uw, v}}$ and  let
$$
\Sigma_{\uw, v} \seteq  ( \{  q^{- (\wt(M_k), \wt(M_k) )/4} [M_k] \}_{k\in \sfJ_{\uw, v}}, -\La, \tB) 
$$ 
be a quantum seed  in  $K(\Cwv)$. We  denote by 
$$
\ca_{\uw, v} \seteq  \ca(\Sigma_{\uw, v}) \qtq  \uca_{\uw, v} \seteq  \uca(\Sigma_{\uw, v})
$$
the  quantum  cluster algebra 
and the quantum upper cluster algebra 
with the initial seed $\Sigma_{\uw, v}$, respectively. 

The following lemma follows from Theorem~\ref{thm: Swv for Cwv}
and Proposition~\ref{prop:Laurentm}.
\begin{lem} \label{lem: quasi-Laurent}
Every seed which is mutation equivalent to $\seed_{\uw, v}$ is a Laurent family in $\Cwv$.
\end{lem}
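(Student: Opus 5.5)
The statement has two ingredients: the base seed is a Laurent family, and the Laurent property propagates along mutations. Theorem~\ref{thm: Swv for Cwv} supplies the first: $\seed_{\uw, v}$ is a Laurent family in $\Cwv$ and a completely admissible seed of $\Cwv$. Proposition~\ref{prop:Laurentm} supplies the second. The plan is to induct on the length $n$ of a mutation sequence $\mu_{t_n}\cdots\mu_{t_1}$ applied to $\seed_{\uw, v}$. Permutations of the index set clearly preserve the Laurent property, so only mutations need attention.

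For the inductive step, suppose $\seed=(M_k)_{k\in\sfJ_{\uw,v}}$ is a Laurent family in $\Cwv$ obtained from $\seed_{\uw,v}$ by finitely many mutations, and let $t\in\sfJ^\ex_{\uw,v}$. Complete admissibility of $\seed_{\uw,v}$ (Theorem~\ref{thm: Swv for Cwv}) guarantees that $\mu_t(\seed)=(M'_k)$ exists as a seed in $\Cwv$. Concretely, $M'_k\simeq M_k$ for $k\ne t$, and there is an exact sequence
\[
0\to q^{a}\conv[b_{kt}>0]M_k^{\circ b_{kt}}\to M_t\conv M'_t\to q^{b}\conv[b_{kt}<0]M_k^{\circ(-b_{kt})}\to 0 .
\]
By Definition~\ref{Def: def MC}, $\mu_t(\seed)$ is again a commuting family of real simples, hence of affreal simples since $R$ is symmetric. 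The two outer terms are $\st{M_k\mid k\ne t}$-monomials, which is hypothesis (b) of Proposition~\ref{prop:Laurentm} except for the requirement that $U$ and $V$ have no common convolution factor.

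That missing hypothesis is the only point needing care. The supports $\st{k\mid b_{kt}>0}$ and $\st{k\mid b_{kt}<0}$ are disjoint, so $U$ and $V$ are monomials in disjoint sets of members of $\seed$. Suppose $S$ is a common convolution factor. Then $S$ commutes with all $M_k$ for $k\ne t$. It also commutes with $M_t$, because $S$ is a factor of $U$, hence of an $\seed$-monomial, and therefore commutes with every member of $\seed$. By the Laurent property (more precisely, by \cite[Theorem 4.1]{KK19}, the unique factorization in the seed), $S$ is an $\seed$-monomial $\seed(\bfc)$. Since $\seed$ is independent, $\bfc$ must be supported in both disjoint supports, which forces $\bfc=0$ and $S\simeq\one$. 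Alternatively, this step can be cited directly from Lemma~\ref{Lem: criterion for cv}, where the same property of the two ends of a mutation exact sequence is used. With this, Proposition~\ref{prop:Laurentm} shows that $\mu_t(\seed)$ is a Laurent family in $\Cwv$, which completes the induction.
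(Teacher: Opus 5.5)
Your proof is correct and is essentially the paper's argument. The paper cites Theorem~\ref{thm: Swv for Cwv} for the base seed and Proposition~\ref{prop:Laurentm} (through the unlabeled proposition after Definition~\ref{Def: def MC}) for stability under mutation; your check of the no-common-factor hypothesis fills in a property of mutations that the paper treats as standard.

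Two small fixes. To conclude that $\bfc$ lies in the support of each end, note that the cofactor $X$ in $U\simeq S\conv X$ is also an $\seed$-monomial $\seed(\bfd)$, for the same reason as $S$; independence then gives $\bfa=\bfc+\bfd$. Also, Lemma~\ref{Lem: criterion for cv} takes condition (c) as a hypothesis rather than proving it, so it does not serve as an alternative citation for this step.
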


We now obtain the main theorem.
\begin{thm} \label{Thm: A K U}
Let  $w,v \in \weyl$ with $v \le w$ and let $\uw$ be a reduced expression of $w$. 
 Then we have 
$$
\ca_{\uw, v} \subset K(\Cwv) \subset \uca_{\uw, v}.
$$

\end{thm}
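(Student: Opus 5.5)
The plan is to prove the two inclusions separately. Both rest on Theorem~\ref{thm: Swv for Cwv}, which says that $\seed_{\uw,v}$ is a completely admissible seed of $\Cwv$, and on Lemma~\ref{lem: quasi-Laurent}, which says that every seed mutation equivalent to $\seed_{\uw,v}$ is a Laurent family in $\Cwv$.

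\emph{First inclusion.} Since $\seed_{\uw,v}$ is completely admissible in $\Cwv$, every seed obtained from it by a finite sequence of mutations is again a monoidal seed in $\Cwv$. Hence each cluster variable of $\Sigma_{\uw,v}$ is the class of a real simple module in $\Cwv$, rescaled by $q^{-(\wt(M),\wt(M))/4}$. More precisely, I would argue inductively along the mutation sequence. The exact sequence in Definition~\ref{Def: def MC} gives the class $[M_k']$ as exactly the quantum exchange relation, with the normalization by $q^{-(\wt,\wt)/4}$ matching the convention of \cite{KKKO18}. Because the compatibility of $(-\La,\tB)$ with $d=2$ is preserved under mutation, the abstract mutation $\mu_k(\Sigma)$ coincides with the image of the monoidal mutation. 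So all cluster variables lie in $\Zqh\otimes K(\Cwv)$. Since $\ca_{\uw,v}$ is the algebra they generate and $K(\Cwv)$ is a ring, this gives $\ca_{\uw,v}\subset K(\Cwv)$. Here $K(\Cwv)$ is understood after tensoring with $\Zqh$, as in Definition~\ref{Def: def MC}.

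\emph{Second inclusion.} Let $\seed'$ be any seed mutation equivalent to $\seed_{\uw,v}$, with quantum seed $\Sigma'$. By Lemma~\ref{lem: quasi-Laurent}, $\seed'$ is a Laurent family in $\Cwv$. The lemma preceding Theorem~\ref{th:cao} then gives an injective algebra map $\vphi_{\seed'}\cl K(\Cwv)\rightarrowtail \T_{\seed'}$, via \eqref{Eq: X Ma}: for each $X$ we have $[X]\cdot[\seed'(\bfa)]=\sum_s q^{c_s}[\seed'(\bfb_s)]$. The map sends $[X]$ to a Laurent polynomial in the cluster variables of $\Sigma'$.

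The key point is that all these maps are compatible with a single ambient skew field $\mathcal{K}$. Take $\mathcal{K}$ to be the skew field of fractions of $\T_{\seed_{\uw,v}}$, or equivalently of $K(\Cwv)$. The relation in \eqref{Eq: X Ma} holds inside $K(\Cwv)$ itself, and $[\seed'(\bfa)]$ is a nonzero element of the domain $K(\Cwv)$. Hence $\vphi_{\seed'}([X])$ is simply $[X]$ viewed in $\mathcal{K}$. Moreover, $\T_{\seed'}\subset\mathcal{K}$ is the quantum torus generated by the cluster variables of $\Sigma'$, identified through the first part. Therefore $[X]\in\T_{\Sigma'}$ for every $\Sigma'$ in the mutation class, that is, $[X]\in\bigcap_{\Sigma'}\T_{\Sigma'}=\uca_{\uw,v}$.

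\emph{Main obstacle.} I expect the main obstacle to be this identification of ambient fields. One must check that $K(\Cwv)$ (tensored with $\Zqh$) is an Ore domain embedding into the same skew field in which $\ca(\Sigma_{\uw,v})$ and all the tori $\T_{\Sigma'}$ live, so that ``$\in \T_{\Sigma'}$'' means the same thing in both settings. I would handle this by noting that $K(\Cwv)\subset K(\Cw)\simeq A_q(\n(w))$ is a subalgebra of a quantum nilpotent algebra, hence an Ore domain. I would then use $\vphi_{\seed_{\uw,v}}$ to embed it in $\mathcal{K}=\operatorname{Frac}(\T_{\seed_{\uw,v}})$. Finally, I would show that each $\T_{\seed'}$ sits in $\mathcal{K}$ compatibly with the mutation formulas, which follows by induction from the exact sequences in Definition~\ref{Def: def MC}.
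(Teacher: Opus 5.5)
Your proposal is correct and is essentially the paper's proof. The first inclusion comes from the complete admissibility of $\seed_{\uw,v}$ in $\Cwv$ (Theorem~\ref{thm: Swv for Cwv}), and the second from Lemma~\ref{lem: quasi-Laurent} together with the injective maps $\vphi_{\seed'}\colon K(\Cwv)\rightarrowtail\T_{\seed'}$ of \eqref{Eq: qlp}. Your discussion of a common ambient skew field just makes explicit what the paper leaves tacit. You do not need the claim that a subalgebra of $A_q(\n(w))$ is automatically Ore, which is unjustified as stated, because $\vphi_{\seed_{\uw,v}}$ already embeds $K(\Cwv)$ into $\operatorname{Frac}(\T_{\seed_{\uw,v}})$.
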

\begin{proof}
Since $\{ M_k \}_{k\in \sfJ_{\uw, v}} $ is contained in $   \Cwv $, 
we have $\ca_{\uw, v} \subset K(\Cwv)$.

For any seed $\seed$ mutation-equivalent to $\seed_{\uw, v}$, 
we have an injective homomorphism  
$$
\vphi_\seed\cl K(\Cwv)\rightarrowtail \T_{\seed}
$$
by Lemma \ref{lem: quasi-Laurent} and \eqref{Eq: qlp}. This implies $ K(\Cwv) \subset \uca_{\uw, v}$. 
\end{proof}

The following corollary was proved in \cite[Theorem 5.16]{Bi26}
by a different method. 
\begin{coro} \label{Cor: MC for tRwv}
Let $\g$ be of finite $ADE$ type. For $v\le w$,  the category 
$\tCwv$ is a monoidal categorification of the coordinate ring $\C[\Rwv]$ of open Richardson variety associated with $w$ and $v$.
\end{coro}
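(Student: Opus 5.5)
The plan is to deduce the corollary from Theorem~\ref{Thm: A K U} after inverting frozen variables, and then to invoke the known finite-type results identifying $\C[\Rwv]$ with Leclerc's cluster algebra. Throughout, we specialize at $q=1$, since the statement concerns the classical coordinate ring.

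First, we identify the localization with the frozen part of the seed. The localized category $\tCwv$ is obtained from $\Cwv$ by inverting the central determinantial modules $\dM(w\La,v\La)$, $\La\in\pwtl$ (\cite{KKOP23A}). By Proposition~\ref{prop:SeedLeclerc}, the prime convolution factors of the modules $\dM(w_{\le k}\La_{i_k},v_{\le k}\La_{i_k})$ are exactly the $M_k$, $k\in[1,r]_v^{\uw}$. The factors of the central modules $\dM(w\La,v\La)$ are exactly those satisfying condition~\eqref{cond:frozen}~\eqref{cond:frozen (b)}, and by Lemma~\ref{Lem: frozen Jfr} these are precisely the $M_k$ with $k\in\sfJ^\fr_{\uw,v}$. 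Taking $\La$ to be fundamental weights, each frozen $M_k$ divides some central $\dM(w\La,v\La)$. Hence inverting the central determinantial modules is the same as inverting the frozen classes $[M_k]$, $k\in\sfJ^\fr_{\uw,v}$, and
$K(\tCwv)\simeq K(\Cwv)\bigl[[M_k]^{-1}\bigr]_{k\in\sfJ^\fr_{\uw,v}}$.

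Second, we localize the inclusions of Theorem~\ref{Thm: A K U} at the frozen variables. This gives $\ca_{\uw,v}[\mathrm{fr}^{-1}]\subset K(\tCwv)\subset\uca_{\uw,v}[\mathrm{fr}^{-1}]$. The left-hand side is the cluster algebra with invertible frozens. For the right-hand side we use the standard fact that, for finitely many frozen variables which are units in every torus, localizing the upper cluster algebra at them yields the upper cluster algebra with invertible frozens: every Laurent element becomes polynomial in the exchangeable variables after multiplication by a high monomial in the frozens. Specializing at $q=1$, Proposition~\ref{prop:SeedLeclerc} identifies $\Sigma_{\uw,v}$ with Leclerc's seed $\seedLec$. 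The finite-type results (\cite{CGGLSS25, GLSB26}, and \cite{CB22} for $ADE$) then give that the cluster and upper cluster algebras of $\seedLec$ with invertible frozens both coincide with $\C[\Rwv]$. The two ends of the chain of inclusions are therefore equal, so $K(\tCwv)|_{q=1}\simeq\C[\Rwv]$, compatibly with the seed.

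Finally, Theorem~\ref{thm: Swv for Cwv} says $\seed_{\uw,v}$ is a completely admissible monoidal seed in $\Cwv$. Its image in $\tCwv$ is therefore completely admissible, so every cluster monomial of $\C[\Rwv]$ is the class of a real simple module, possibly multiplied by an inverse frozen. This is condition (iv)(b) of Definition~\ref{Def: def MC} for the localized category, and together with the ring isomorphism above it gives the monoidal categorification.

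The main obstacle is the first step: showing that inverting the central determinantial modules and inverting the frozen $M_k$ give the same localization. This requires each frozen $M_k$ to divide some central $\dM(w\La,v\La)$, and conversely every prime factor of a central module to be frozen. I would derive this from the unitriangularity in Proposition~\ref{prop:SeedLeclerc} combined with Lemma~\ref{Lem: frozen Jfr} and Lemma~\ref{lem:Mw Mwv}. The remaining steps are formal once the external finite-type equality $\ca=\uca=\C[\Rwv]$ is imported.
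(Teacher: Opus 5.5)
Your proposal is correct and follows the paper's proof: localize Theorem~\ref{Thm: A K U} at the frozen variables, identify $\seed_{\uw,v}$ with $\seedLec$ by Proposition~\ref{prop:SeedLeclerc}, import the finite-type equality of the cluster and upper cluster algebras with invertible frozens, and finish with Theorem~\ref{thm: Swv for Cwv}. The one difference is that the paper justifies the external input by passing from $\seedLec$ to M\'enard's seed \cite[Theorem 5.14]{Bi26} and then to a braid-variety seed \cite[Section 10]{CGGLSS25}, where $\ca=\uca$ is proved, whereas you cite the finite-type results for $\seedLec$ directly; in the other direction, your check that localizing at the central modules $\dM(w\La,v\La)$ amounts to inverting the frozen $M_k$ is a detail the paper leaves implicit, and it is cleanest via the Laurent-family property of $\seed_{\uw,v}$ and Lemma~\ref{Lem: frozen Jfr}.
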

\begin{proof}
By taking localizations by frozen variables, we have
$$
\widetilde \ca_{\uw, v} \subset K(\tCwv) \subset \widetilde  \uca_{\uw, v},
$$
where $\widetilde\ca_{\uw, v}$ and $\widetilde\uca_{\uw, v}$ are cluster algebra and upper cluster algebra with invertible frozen variables, respectively.

By Proposition \ref{prop:SeedLeclerc}, $\seed_{\uw,v}$ is the seed of Leclerc in \cite[Corollary 4.4]{Lec16}. 
By \cite[Theorem 5.14]{Bi26}, it is the same with the seed of M\'enard in \cite{Menard22}. 
By \cite[Section 10]{CGGLSS25}, M\'enard's seed is a seed (associated with a right inductive weave) of a braid variety.
Since the cluster algebras of braid varieties are equal to their upper cluster algebras, we obtain  
$$\widetilde \ca_{\uw, v} = K(\tCwv)= \widetilde  \uca_{\uw, v}.$$
Now the assertion follows from Theorem \ref{thm: Swv for Cwv}.
\end{proof}

\Conj \label{Conj: MC}
For any $w,v \in \weyl$ with $v \le w$, we have $\ca_{\uw, v} = \uca_{\uw, v}$. This means that 
 $\Cwv$ provides a monoidal categorification of $K(\Cwv)$.
\enconj

\end{document}